\def\CC{\mathbb C}
\def\NN{\mathbb N}
\def\RR{\mathbb R}
\def\TT{\mathbb T}
\def\ZZ{\mathbb Z}
\def\Cc{\mathcal C}
\def\Dd{\mathcal D}
\def\Ee{\mathcal E}
\def\Ff{\mathcal F}
\def\Gg{\mathcal G}
\def\Hh{\mathcal H}
\def\Kk{\mathcal K}
\def\Oo{\mathcal O}
\def\Pp{\mathcal P}
\def\Tt{\mathcal T}
\def\ker{\operatorname{ker}}
\def\Ext{\operatorname{Ext}}
\def\id{\operatorname{id}}
\def\Aut{\operatorname{Aut}}
\def\lsp{\operatorname{span}}
\def\clsp{\operatorname{\overline{span\!}\,\,}}
\def\mod{\operatorname{mod}}
\def\MCE{\operatorname{MCE}}
\def\IC{\operatorname{IC}}
\newcommand{\Me}{\sim_{\mathrm{Me}}}
\newcommand{\FE}{\operatorname{FE}}
\newcommand{\sources}{\mathrm{src}}
\newcommand{\Lmin}{\Lambda^{\mathrm{min}}}
\newcommand{\im}{i}
\newcommand{\PAF}{\Pp_{\mathrm{I}}}
\newcommand{\PNAF}{\Pp_{\mathrm{I\hskip-0.5pt I}}}
\newcommand{\LAF}{\Lambda_{\mathrm{I}}}
\newcommand{\LNAF}{\Lambda_{\mathrm{I\hskip-0.5pt I}}}
\theoremstyle{plain}
\newtheorem{theorem}{Theorem}[section]
\newtheorem*{theorem*}{Theorem}
\newtheorem*{prop*}{Proposition}
\newtheorem{cor}[theorem]{Corollary}
\newtheorem{lemma}[theorem]{Lemma}
\newtheorem{prop}[theorem]{Proposition}
\theoremstyle{remark}
\newtheorem{rmk}[theorem]{Remark}
\newtheorem{example}[theorem]{Example}
\theoremstyle{definition}
\newtheorem{dfn}[theorem]{Definition}
\numberwithin{equation}{section}
\title[AF $k$-graph $C^*$-algebras]{When is the Cuntz-Krieger algebra of a higher-rank
graph approximately finite-dimensional?}
\author{D. Gwion Evans}
\address{Institute of Mathematics and Physics\\ Aberystwyth University\\ Penglais Campus \\
Aberystwyth\\ Ceredigion \\ SY23 3BZ \\ Wales \\ UK.}
\email{dfe@aber.ac.uk}
\author{Aidan Sims}
\address{School of Mathematics and Applied Statistics\\
University of Wollongong \\
NSW 2522\\
Australia}
\email{asims@uow.edu.au}
\keywords{Graph $C^*$-algebra, $C^*$-algebra, AF algebra, higher-rank graph, Cuntz-Krieger algebra}
\date{\today}
\subjclass[2010]{Primary 46L05}
\thanks{This research was supported by the Australian Research Council
and by an LMS travelling lecturer grant.}
\begin{document}

\begin{abstract}
We investigate the question: when is a higher-rank graph $C^*$-algebra approximately finite
dimensional? We prove that the absence of an appropriate higher-rank analogue of a cycle is
necessary. We show that it is not in general sufficient, but that it is sufficient for higher-rank
graphs with finitely many vertices. We give a detailed description of the structure of the
$C^*$-algebra of a row-finite locally convex higher-rank graph with finitely many vertices. Our
results are also sufficient to establish that if the $C^*$-algebra of a higher-rank graph is AF,
then its every ideal must be gauge-invariant. We prove that for a higher-rank graph $C^*$-algebra
to be AF it is necessary and sufficient for all the corners determined by vertex projections to be
AF. We close with a number of examples which illustrate why our question is so much more difficult
for higher-rank graphs than for ordinary graphs.
\end{abstract}

\maketitle

\section{Introduction}

A directed graph $E$ consists of countable sets $E^0$ and $E^1$ and maps $r,s : E^1 \to E^0$. We
call elements of $E^0$ \emph{vertices} and elements of $E^1$ \emph{edges} and think of each $e \in
E^1$ as an arrow pointing from $s(e)$ to $r(e)$. When $r^{-1}(v)$ is finite and nonempty for all
$v$, the graph $C^*$-algebra $C^*(E)$ is the universal $C^*$-algebra generated by a family of
mutually orthogonal projections $\{p_v : v \in E^0\}$ and a family of partial isometries $\{s_e : e
\in E^1\}$ such that $s^*_e s_e = p_{s(e)}$ for all $e \in E^1$ and $p_v = \sum_{r(e) = v} s_e
s^*_e$ for all $v \in E^0$ \cite{EW1980, KPRR1997}.

Despite the elementary nature of these relations, the class of graph $C^*$-algebras is quite rich.
It includes, up to strong Morita equivalence, all AF algebras \cite{Drinen2000, Tyler2004}, all
Kirchberg algebras whose $K_1$ group is free abelian \cite{Szymanski2002} and many other
interesting $C^*$-algebras besides \cite{HS2002, HS2003a}. We know this because we can read off a
surprising amount of the structure of a graph $C^*$-algebra (for example its $K$-theory
\cite{PR1996, RS2004}, and its whole primitive ideal space \cite{HS2004}) directly from the graph.
In particular, a graph $C^*$-algebra is AF if and only if the graph contains no directed cycles
\cite[Theorem 2.4]{KPR1998}. Moreover, if $E$ contains a directed cycle and $C^*(E)$ is simple,
then $C^*(E)$ is purely infinite. So every simple graph $C^*$-algebra is classifiable either by
Elliott's theorem or by the Kirchberg-Phillips theorem.

In 2000, Kumjian and Pask introduced higher-rank graphs, or $k$-graphs, and their $C^*$-algebras
\cite{KP2000} as a generalisation of graph algebras designed to model Robertson and Steger's
higher-rank Cuntz-Krieger algebras \cite{RS1999a}. These have proved a very interesting source of
examples in recent years \cite{DavidsonYang:CJM09, PRRS2006}, but remain far less-well understood
than their 1-dimensional counterparts, largely because their structure theory is much more
complicated. In particular, a general structure result for simple $k$-graph algebras is still
lacking; even a satisfactory characterisation of simplicity itself is in full generality fairly
recent \cite{Shotwell:xx08}. The examples of \cite{PRRS2006} show that there are simple $k$-graph
algebras which are neither AF nor purely infinite, indicating that the question is more complicated
than for directed graphs. Some fairly restrictive sufficient conditions have been identified which
ensure that a simple $k$-graph $C^*$-algebra is AF \cite[Lemma 5.4]{KP2000} or is purely infinite
\cite[Proposition 8.8]{Sims2006a}, but there is a wide gap between the two.

Deciding whether a given $C^*$-algebra is AF is an interesting and notoriously difficult problem.
The guiding principle seems to be that if, from the point of view of its invariants, it looks AF
and it smells AF, then it is probably AF. This point of view led to the discovery and analyses of
non-AF fixed point subalgebras of group actions on non-standard presentations of AF algebras
initiated by  \cite{Blackadar:AoM90} and \cite{Kumjian88} and continued by
\cite{EvansKishimoto:JFA91, BratteliElliottEtAl:KTh94} and others.  Numerous powerful AF
embeddability theorems (the canonical example is \cite{PimsnerVoiculescu:JOT80a}; and more recently
for example \cite{Katsura:JFA02, Brown:JFA98, Spielberg:JFA88}) have also been uncovered.  These
results demonstrate that algebraic obstructions --- beyond the obvious one of stable finiteness ---
to approximate finite dimensionality of $C^*$-algebras are hard to come by. On the other hand,
proving that a given $C^*$-algebra is AF can be a highly nontrivial task (cf.
\cite{BratteliElliottEtAl:KTh94, BratteliEvansEtAl:ETDS93} and the series of penetrating analyses
of actions of finite subgroups of $\mathrm{SL}_2(\ZZ)$ on the irrational rotation algebra initiated
by \cite{BEEK91, BratteliKishimoto:CMP92, Walters95} and culminating in
\cite{EchterhoffLuckEtAl:Crelle10}). Moreover, non-standard presentations of AF algebras have found
applications in classification theory \cite{PimsnerVoiculescu:JOT80a}, and also to long-standing
questions such as the Powers-Sakai conjecture \cite{Kishimoto:JFA03}.

In this paper, we consider more closely the question of when a $k$-graph $C^*$-algebra is AF. The
question is quite vexing, and we have not been able to give a complete answer (see
Example~\ref{eg:skew-pullback}). However, we have been able to weaken the existing necessary
condition for the presence of an infinite projection, and also to show that for a $k$-graph
$C^*$-algebra to be AF, it is necessary that the $k$-graph itself should contain no directed
cycles; indeed, we identify a notion of a higher-dimensional cycle the presence of which precludes
approximate finite dimensionality of the associated $C^*$-algebra. Our results are sufficiently
strong to completely characterise when a unital $k$-graph $C^*$-algebra is AF, and to completely
describe the structure of unital $k$-graph $C^*$-algebras associated to row-finite $k$-graphs. We
also provide some examples confirming some earlier conjectures of the first author. Specifically,
we construct a $2$-graph $\Lambda$ which contains no cycles and in which every infinite path is
aperiodic, but such that $C^*(\Lambda)$ is finite but not AF, and we construct an example of a
$2$-graph which does not satisfy \cite[Condition~(S)]{EvansPhD} but does satisfy
\cite[Condition~($\Gamma$)]{EvansPhD} and whose $C^*$-algebra is AF. We close with an intriguing
example of a $2$-graph $\LNAF$ whose infinite-path space contains a dense set of periodic points,
but whose $C^*$-algebra is simple, unital and AF-embeddable, and shares many invariants with the
$2^\infty$ UHF algebra. If, as seems likely, the $C^*$-algebra of $\LNAF$ is strongly Morita
equivalent to the $2^\infty$ UHF algebra, it will follow that the structure theory of simple
$k$-graph algebras is much more complex than for graph algebras.

We remark that a proof that $C^*(\LNAF)$ is indeed AF would provide another interesting
non-standard presentation of an AF algebra. It would open up the possibility that known
constructions for $k$-graph $C^*$-algebras might provide new insights into questions about AF
algebras.

\subsection*{Acknowledgements}
We thank David Evans for suggesting the title of the paper as a research question. We also thank
Bruce Blackadar, Alex Kumjian, Efren Ruiz and Mark Tomforde for helpful discussions, and Andrew
Toms and Wilhelm Winter for helpful email correspondence. Finally, Aidan thanks Gwion for his warm
hospitality in Rome and again in Aberystwyth.

\section{Background}

We introduce some background relating to $k$-graphs and their $C^*$-algebras. See \cite{KP2000,
RSY2003, RSY2004} for details.

\subsection{Higher-rank graphs}
Fix an integer $k > 0$. We regard $\NN^k$ as a semigroup under pointwise addition with identity
element denoted 0. When convenient, we also think of it as a category with one object. We denote
the generators of $\NN^k$ by $e_1, \dots e_k$, and for $n \in \NN^k$ and $i \le k$ we write $n_i$
for the $i$\textsuperscript{th} coordinate of $n$; so $n = (n_1, n_2, \dots, n_k) = \sum^k_{i=1}
n_i e_i$. For $m, n \in \NN^k$, we write $m \le n$ if $m_i \le n_i$ for all $i$, and we write
$m\vee n$ for the coordinatewise maximum of $m$ and $n$, and $m \wedge n$ for the coordinatewise
minimum of $m$ and $n$. Observe that $m \wedge n \le m,n \le m \vee n$, and that $m' := m - (m
\wedge n)$ and $n' := n - (m \wedge n)$ is the unique pair such that $m - n = m' - n'$ and $m'
\wedge n' = 0$. For $n \in \NN^k$, we write $|n|$ for the \emph{length} $|n| = \sum^k_{i=1} n_i$ of
$n$.

As introduced in \cite{KP2000}, a \emph{graph of rank $k$} or a $k$-graph is a countable small
category $\Lambda$ equipped with a functor $d : \Lambda \to \NN^k$, called the \emph{degree
functor}, which satisfies the \emph{factorisation property}: for all $m,n \in \NN^k$ and all
$\lambda \in \Lambda$ with $d(\lambda) = m + n$, there exist unique $\mu,\nu \in \Lambda$ such that
$d(\mu) = m$, $d(\nu) = n$ and $\lambda = \mu\nu$.

We write $\Lambda^n$ for $d^{-1}(n)$. If $d(\lambda) = 0$ then $\lambda = \id_o$ for some object
$o$ of $\Lambda$. Hence $r(\lambda) := \id_{\operatorname{cod}(\lambda)}$ and $s(\lambda) :=
\id_{\operatorname{dom}(\lambda)}$ determine maps $r,s : \Lambda \to \Lambda^0$ which restrict to
the identity map on $\Lambda^0$ (see \cite{KP2000}). We think of elements of $\Lambda^0$ both as
vertices and as paths of degree 0, and we think of each $\lambda \in \Lambda$ as a path from
$s(\lambda)$ to $r(\lambda)$. If $v \in \Lambda^0$ and $\lambda \in \Lambda$, then the composition
$v\lambda$ makes sense if and only if $v = r(\lambda)$. With this in mind, given a subset $E$ of
$\Lambda$, and a vertex $v \in \Lambda^0$, we write $vE$ for the set $\{\lambda \in E : r(\lambda)
= v\}$. Similarly, $Ev$ denotes $\{\lambda \in E : s(\lambda) = v\}$. In particular, for $v \in
\Lambda^0$ and $n \in \NN^k$, we have $v\Lambda^n = \{\lambda \in \Lambda : d(\lambda) = n\text{
and } r(\lambda) = v\}$.  Moreover, given a subset $H$ of $\Lambda^0$, we let $EH$ denote the set
 $\{ \lambda \in E :  s(\lambda) \in H \}$ and set $HE = \{ \lambda \in E : r(\lambda) \in H \}$.

We say that $\Lambda$ is \emph{row-finite} if $v\Lambda^n$ is finite for all $v \in \Lambda^0$ and
$n \in \NN^k$. We say that $\Lambda$ has \emph{no sources} if $v\Lambda^n$ is nonempty for all $v
\in \Lambda^0$ and $n \in \NN^k$. We say that $\Lambda$ is \emph{locally convex} if, whenever $\mu
\in \Lambda^{e_i}$ and $r(\mu)\Lambda^{e_j} \not= \emptyset$ with $i \not= j$, we have
$s(\mu)\Lambda^{e_j} \not= \emptyset$ also.

For $\lambda \in \Lambda$ and $m \le n \le d(\lambda)$, we denote by $\lambda(m,n)$ the unique
element of $\Lambda^{n-m}$ such that $\lambda = \lambda'\lambda(m,n)\lambda''$ for some $\lambda',
\lambda''\in\Lambda$  with $d(\lambda') = m$ and $d(\lambda'') = d(\lambda) - n$.

For $\mu,\nu \in \Lambda$, a \emph{minimal common extension} of $\mu$ and $\nu$ is a path $\lambda$
such that $d(\lambda) = d(\mu) \vee d(\nu)$ and $\lambda = \mu\mu' = \nu\nu'$ for some $\mu', \nu'
\in \Lambda$. Equivalently, $\lambda$ is a minimal common extension of $\mu$ and $\nu$ if
$d(\lambda) = d(\mu) \vee d(\nu)$ and $\lambda(0, d(\mu)) = \mu$ and $\lambda(0, d(\nu)) = \nu$. We
write $\MCE(\mu,\nu)$ for the set of all minimal common extensions of $\mu$ and $\nu$, and we say
that $\Lambda$ is \emph{finitely aligned} if $\MCE(\mu,\nu)$ is finite (possibly empty) for all
$\mu,\nu \in \Lambda$. If $\Gamma$ is a sub-$k$-graph of $\Lambda$, then for $\mu,\nu \in \Gamma$
we write $\MCE_\Gamma(\mu,\nu)$ and $\MCE_\Lambda(\mu,\nu)$ to emphasise in which $k$-graph we are
computing the set of minimal common extensions. We have $\MCE_\Gamma(\mu,\nu) =
\MCE_\Lambda(\mu,\nu) \cap \Gamma \times \Gamma$.

For $\lambda \in \Lambda$ and $E \subseteq r(\lambda)\Lambda$, the set of paths $\tau \in
s(\lambda) \Lambda$ such that $\lambda\tau \in \MCE(\lambda,\mu)$ for some $\mu \in E$ is denoted
$\Ext(\lambda; E)$. That is,
\[
\Ext(\lambda; E) = \bigcup_{\mu \in E} \{\tau \in s(\lambda)\Lambda : \lambda\tau \in \MCE(\lambda,\mu)\}.
\]
By \cite[Proposition~3.12]{FMY2005}, we have $\Ext(\lambda\mu; E) = \Ext(\mu; \Ext(\lambda;E))$ for
all composable $\lambda,\mu$ and all $E \subseteq r(\lambda)\Lambda$.

Fix a vertex $v \in \Lambda^0$. A subset $F \subseteq v\Lambda$ is called \emph{exhaustive} if for
every $\lambda \in v\Lambda$ there exists $\mu \in F$ such that $\MCE(\lambda,\mu) \not=
\emptyset$.
 By \cite[Lemma~C.5]{RSY2004}, if $E \subset r(\lambda)\Lambda$ is exhaustive, then
$\Ext(\lambda;E) \subseteq s(\lambda)\Lambda$ is also exhaustive.

\subsection{Higher-rank graph \texorpdfstring{$C^*$}{C*}-algebras}
Let $\Lambda$ be a finitely aligned $k$-graph. A Cuntz-Krieger $\Lambda$-family is a subset
$\{t_\lambda : \lambda \in \Lambda\}$ of a $C^*$-algebra $B$ such that
\begin{enumerate}\renewcommand{\theenumi}{CK\arabic{enumi}}
    \item\label{it:CK1} $\{t_v : v \in \Lambda^0\}$ is a family of mutually orthogonal
        projections;
    \item\label{it:CK2} $t_\mu t_\nu = t_{\mu\nu}$ whenever $s(\mu) = r(\nu)$;
    \item\label{it:CK3} $t^*_\mu t_\nu = \sum_{\mu\alpha = \nu\beta \in \MCE(\mu,\nu)} t_\alpha
        t^*_\beta$ for all $\mu,\nu \in \Lambda$; and
    \item\label{it:CK4} $\prod_{\lambda \in E} (t_v -  t_\lambda t^*_\lambda) = 0$ for all $v
        \in \Lambda^0$ and finite exhaustive sets $E \subseteq v\Lambda$.
\end{enumerate}
The $C^*$-algebra $C^*(\Lambda)$ of $\Lambda$ is the universal $C^*$-algebra generated by a
Cuntz-Krieger $\Lambda$-family; the universal family in $C^*(\Lambda)$ is denoted $\{s_\lambda :
\lambda \in \Lambda\}$.

The universal property of $C^*(\Lambda)$ ensures that there exists a strongly continuous action
$\gamma$ of $\TT^k$ on $C^*(\Lambda)$ satisfying $\gamma_z(s_\lambda) = z^{d(\lambda)} s_\lambda$
for all $z \in \TT^k$ and $\lambda \in \Lambda$, where $z^{d(\lambda)}$ is defined by the standard
multi-index formula $z^{d(\lambda)} = z_1^{d(\lambda)_1} z_2^{d(\lambda)_2} \dots
z_k^{d(\lambda)_k}$.

The Cuntz-Krieger relations can be simplified significantly under additional hypotheses. For
details of the following, see \cite[Appendix~B]{RSY2004}. Suppose that $\Lambda$ is row-finite and
locally convex. For $n \in \NN^k$, define
\[
    \Lambda^{\le n} := \bigcup_{m \le n} \{\lambda \in \Lambda^m : s(\lambda)\Lambda^{e_i} = \emptyset
                                            \text{ for all }i \le k\text{ such that }m_i < n_i\}.
\]
Then (\ref{it:CK3})~and~(\ref{it:CK4}) are equivalent to
\begin{enumerate}\renewcommand{\theenumi}{$\text{CK\arabic{enumi}}'$}\setcounter{enumi}{2}
    \item\label{it:CK3'} $t^*_\mu t_\mu = t_{s(\mu)}$ for all $\mu \in \Lambda$, and
    \item\label{it:CK4'} $t_v = \sum_{\lambda \in v\Lambda^{\le n}} t_\lambda t^*_\lambda$ for
        all $v \in \Lambda^0$ and $n \in \NN^k$.
\end{enumerate}
If $\Lambda$ is has no sources, then $\Lambda^{\le n} = \Lambda^n$ for all $n$, so if $\Lambda$ is
row-finite and has no sources then~(\ref{it:CK4'}) is equivalent to
\begin{enumerate}\renewcommand{\theenumi}{$\text{CK\arabic{enumi}}''$}\setcounter{enumi}{3}
    \item\label{it:CK4''} $t_v = \sum_{\lambda \in v\Lambda^n} t_\lambda t^*_\lambda$ for all
        $v \in \Lambda^0$ and $n \in \NN^k$.
\end{enumerate}
Note that~(\ref{it:CK3}) implies~(\ref{it:CK3'}) for all $k$-graphs $\Lambda$.

Recall from \cite{PRS2008} that a \emph{graph trace} on a row-finite $k$-graph $\Lambda$ with no
sources is a function $g : \Lambda^0 \to \RR^+$ such that $g(v) = \sum_{\lambda \in v\Lambda^n}
g(s(\lambda))$ for all $v \in \Lambda^0$ and $n \in \NN^k$. A graph trace $g$ is called
\emph{faithful} if $g(v) \not= 0$ for all $v \in \Lambda^0$. Proposition~3.8 of \cite{PRS2008}
describes how faithful graph traces on $\Lambda$ correspond with faithful gauge-invariant
semifinite traces on $C^*(\Lambda)$. We call a graph trace $g$ \emph{finite} if $\sum_{v \in
\Lambda^0} g(v)$ converges to some $T \in \RR^+$, and we say that a finite graph trace $g$ is
\emph{normalised} if $\sum_{v \in \Lambda^0} g(v) = 1$.

\begin{lemma}\label{lem:graph traces revisited}
Let $\Lambda$ be a row-finite $k$-graph with no sources. Each normalised finite faithful graph
trace $g$ on $\Lambda$ determines a faithful bounded gauge-invariant trace $\tau_g$ on
$C^*(\Lambda)$ which is normalised in the sense that the limit over increasing finite subsets $F$
of $\Lambda^0$ of $\tau_g\Big(\sum_{v \in F} s_v\Big)$ is equal to $1$: specifically, $\tau_g(s_\mu
s^*_\nu) = \delta_{\mu,\nu}g(s(\mu))$ for all $\mu,\nu \in \Lambda$. Moreover, $g \mapsto \tau_g$
is a bijection between normalised finite faithful graph traces on $\Lambda$ and normalised faithful
gauge-invariant traces on $C^*(\Lambda)$.
\end{lemma}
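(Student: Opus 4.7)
The plan is to deduce this from Proposition~3.8 of \cite{PRS2008}, which already provides a bijection $g \mapsto \tau_g$ between faithful graph traces on $\Lambda$ and faithful gauge-invariant semifinite traces on $C^*(\Lambda)$ satisfying the stated formula $\tau_g(s_\mu s^*_\nu) = \delta_{\mu,\nu} g(s(\mu))$. The only new content is to identify, under this bijection, the normalised finite graph traces with the normalised bounded gauge-invariant traces.

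First I would show that, for a normalised finite faithful $g$, the semifinite trace $\tau_g$ produced by \cite{PRS2008} is in fact bounded of norm~$1$. The finite sums $p_F := \sum_{v \in F} s_v$, indexed by finite $F \subseteq \Lambda^0$, are an increasing net of projections forming an approximate identity for $C^*(\Lambda)$, so it suffices to bound $\tau_g(p_F)$. Taking $\mu = \nu = v$ in the defining formula gives $\tau_g(s_v) = g(v)$, hence $\tau_g(p_F) = \sum_{v \in F} g(v)$, which increases to $1$ by the normalisation of $g$. Thus $\tau_g$ is bounded with $\|\tau_g\| = 1$, and the normalisation condition of the lemma holds.

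For the converse, suppose $\tau$ is a normalised faithful gauge-invariant (bounded) trace on $C^*(\Lambda)$ and define $g(v) := \tau(s_v)$. Applying $\tau$ to~(\ref{it:CK4''}) and using the trace property together with $s^*_\lambda s_\lambda = s_{s(\lambda)}$ from~(\ref{it:CK3'}) gives
\[
g(v) = \tau(s_v) = \sum_{\lambda \in v\Lambda^n} \tau(s_\lambda s^*_\lambda) = \sum_{\lambda \in v\Lambda^n} \tau(s_{s(\lambda)}) = \sum_{\lambda \in v\Lambda^n} g(s(\lambda))
\]
for every $v$ and $n$, so $g$ is a graph trace; faithfulness of $\tau$ forces $g(v) > 0$, and $\sum_{v \in F} g(v) = \tau(p_F) \to \|\tau\| = 1$, so $g$ is normalised and finite. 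Finally, that $\tau = \tau_g$ follows by noting that gauge-invariance kills $\tau(s_\mu s^*_\nu)$ unless $d(\mu) = d(\nu)$, in which case the trace property together with (\ref{it:CK3}) reduces $\tau(s_\mu s^*_\nu)$ to $\delta_{\mu,\nu} g(s(\mu))$; since such elements span a dense subset of $C^*(\Lambda)$, the two bounded functionals coincide. This gives the inverse of $g \mapsto \tau_g$ and completes the bijection.

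No step presents a genuine obstacle: the heart of the argument, namely the correspondence on the level of semifinite traces and the formula $\tau_g(s_\mu s^*_\nu) = \delta_{\mu,\nu} g(s(\mu))$, is quoted from \cite{PRS2008}; the remaining work is just the bookkeeping above to match up the normalisation and boundedness conditions via the approximate identity $(p_F)_F$. The mildest care required is to check that the $\tau_g$ from \cite{PRS2008} is indeed the unique bounded trace satisfying the stated formula once boundedness is established, which follows from density of $\lsp\{s_\mu s^*_\nu\}$.
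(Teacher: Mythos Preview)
Your proposal is correct and follows essentially the same approach as the paper: both invoke \cite[Proposition~3.8]{PRS2008} for the underlying bijection and then use the approximate identity $p_F = \sum_{v\in F} s_v$ to match up finiteness and normalisation on the two sides. The only cosmetic difference is that the paper argues directly that the existing bijection restricts (by showing $\tau_g$ is finite/normalised iff $g$ is), whereas you explicitly rebuild the inverse $\tau \mapsto g$; this extra work is harmless but unnecessary given that PRS2008 already supplies the bijection.
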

\begin{proof}
By \cite[Proposition~3.8]{PRS2008}, the map $g \mapsto \tau_g$ is a bijection between faithful (not
necessarily finite or normalised) graph traces on $\Lambda$ and faithful semifinite
lower-semicontinuous gauge-invariant traces on $C^*(\Lambda)$. So it suffices to show that $\tau_g$
is finite if and only if $g$ is finite, and that $\tau_g$ is normalised if and only if $g$ is
normalised. For this, for each finite $F \subseteq \Lambda^0$ let $P_F := \sum_{v \in F} s_v \in
C^*(\Lambda)$. Then the $P_F$ form an approximate identity, and so $\tau_g$ is finite if and only
if $\lim_F \tau_g(P_F) = \sum_{v \in F} g(v)$ converges. Moreover, each of $g$ and $\tau_g$ is
normalised if and only if each of these sums converges to 1.
\end{proof}

\subsection{Infinite paths and aperiodicity}

For each $m \in (\NN \cup \{\infty\})^k$, we define a $k$-graph $\Omega_{k,m}$ by
\begin{gather*}
    \Omega_{k,m} = \{(p,q) \in \NN^k \times \NN^k : p \le q \le m\}, \text{ with} \\
    r(p,q) = (p,p),\qquad s(p,q) = (q,q),\qquad\text{ and }\qquad d(p,q) = q-p.
\end{gather*}
It is standard to identify $\Omega_{k,m}^0$ with $\{p \in \NN^k : p \le m\}$ by $(p,p) \mapsto p$,
and we shall silently do so henceforth.

If $\Lambda$ and $\Gamma$ are $k$-graphs, then a $k$-graph morphism $\phi : \Lambda \to \Gamma$ is
a functor from $\Lambda$ to $\Gamma$ which preserves degree: $d_\Gamma(\phi(\lambda)) =
d_\Lambda(\lambda)$ for all $\lambda \in \Lambda$.

Given a $k$-graph $\Lambda$ and $m \in \NN^k$, each $\lambda \in \Lambda^m$ determines a $k$-graph
morphism $x_\lambda : \Omega_{k, m} \to \Lambda$ by $x_\lambda(p,q) := \lambda(p,q)$ for all $(p,q)
\in \Omega_{k,m}$. Moreover, each $k$-graph morphism $x : \Omega_{k,m} \to \Lambda$ determines an
element $x(0,m)$ of $\Lambda^m$. Thus we identify the collection of $k$-graph morphisms from
$\Omega_{k,m}$ to $\Lambda$ with $\Lambda^m$ when $m \in \NN^k$. Extending this idea, given $m \in
(\NN \cup \{\infty\})^k \setminus \NN^k$, we regard $k$-graph morphisms $x : \Omega_{k,m} \to
\Lambda$ as paths of degree $m$ in $\Lambda$ and write $d(x):=m$ and $r(x)$ for $x(0)$; we denote
the set of all such paths by $\Lambda^m$. When $m = (\infty, \infty, \dots, \infty)$, we denote
$\Omega_{k,m}$ by $\Omega_k$ and we call a path $x$ of degree $m$ in $\Lambda$ an \emph{infinite
path.} We denote by $W_\Lambda$ the collection $\bigcup_{m \in (\NN \cup \{\infty\})^k} \Lambda^m$
of all paths in $\Lambda$; our conventions allow us to regard $\Lambda$ as a subset of $W_\Lambda$.

For each $n \in \NN^k$ there is a \emph{shift map} $\sigma^n : \{x \in W_\Lambda : n \le d(x)\} \to
W_\Lambda$ such that $d(\sigma^n(x)) = d(x) - n$ and $\sigma^n(x)(p,q) = x(n+p, n+q)$ for $0 \le p
\le q \le d(x)-n$. Given $x \in W_\Lambda$ and $\lambda \in \Lambda r(x)$, there is a unique
$\lambda x \in W_\Lambda$ satisfying $d(\lambda x) = d(\lambda) + d(x)$, $(\lambda x)(0,
d(\lambda)) = \lambda$ and $\sigma^{d(\lambda)}(\lambda x) = x$. For $x \in W_\Lambda$ and $n \le
d(x)$, we then have $x(0, n)\sigma^n(x) = x$.

A \emph{boundary path} in $\Lambda$ is a path $x : \Omega_{k,m} \to \Lambda$ with the property that
for all $p \in \Omega_{k,m}^0$ and all finite exhaustive sets $E \subseteq x(p)\Lambda$, there
exists $\mu \in E$ such that $x(p, p+d(\mu)) = \mu$. We denote by $\partial\Lambda$ the collection
of all boundary paths in $\Lambda$. Lemma~5.15 of~\cite{FMY2005} implies that for each $v \in
\Lambda^0$, the set $v\partial\Lambda := \{x \in \partial\Lambda : r(x) = v\}$ is nonempty. Fix $x
\in
\partial\Lambda$. If $n \le d(x)$, then $\sigma^n(x) \in \partial\Lambda$, and if $\lambda \in
\Lambda r(x)$, then $\lambda x \in \partial\Lambda$ \cite[Lemma~5.13]{FMY2005}. Recall also from
\cite{RSY2003} that if $\Lambda$ is row-finite and locally convex, then $\partial \Lambda$
coincides with the set
\[
    \Lambda^{\le\infty} = \{x \in W_\Lambda : x(n)\Lambda^{e_i} = \emptyset\text{ whenever }
        n \le d(x)\text{ and } n_i = d(x)_i\}.
\]

Recall from \cite{LS2010} that a $k$-graph $\Lambda$ is said to be \emph{aperiodic} if for all
$\mu,\nu \in \Lambda$ such that $s(\mu) = s(\nu)$ there exists $\tau \in s(\mu)\Lambda$ such that
$\MCE(\mu\tau, \nu\tau) = \emptyset$. By \cite[Proposition~3.6~and~Theorem~4.1]{LS2010}, the
following are equivalent:
\begin{enumerate}
    \item $\Lambda$ is aperiodic;
    \item for all distinct $m,n \in \NN^k$ and $v \in \Lambda^0$ there exists $x \in
        v\partial\Lambda$ such that either $m \vee n \not\le d(x)$ or $\sigma^m(x) \not=
        \sigma^n(x)$;
    \item for all $v \in \Lambda^0$ there exists $x \in v\partial\Lambda$ such that for
        distinct $m,n \le d(x)$, $\sigma^m(x) \not= \sigma^n(x)$;
    \item for every nontrivial ideal $I$ of $C^*(\Lambda)$ there exists $v \in \Lambda^0$ such
        that $s_v \in I$.
\end{enumerate}

Here, and in the rest of the paper, an ``ideal" of a $C^*$-algebra always means a closed 2-sided
ideal.

\subsection{Skeletons}

We will frequently wish to present a $k$-graph visually. To do this, we draw its \emph{skeleton}
and, if necessary, list the associated \emph{factorisation rules.}

Given a $k$-graph $\Lambda$, the \emph{skeleton} of $\Lambda$ is the coloured directed graph
$E_\Lambda$ with vertices $E_\Lambda^0 = \Lambda^0$, edges $E_\Lambda^1 := \bigcup^k_{i=1}
\Lambda^{e_i}$ and with colouring map $c : E_\Lambda^1 \to \{1, \dots, k\}$ given by $c(\alpha) =
i$ if and only if $\alpha \in \Lambda^{e_i}$. In pictures in this paper, edges of degree $e_1$ will
be drawn as solid lines and those of degree $e_2$ as dashed lines. If $\alpha, \beta \in
E_\Lambda^1$ have distinct colours, say $c(\alpha) = i$ and $c(\beta) = j$, and if $s(\alpha) =
r(\beta)$, then $\alpha\beta \in \Lambda^{e_i + e_j}$ and the factorisation property in $\Lambda$
implies that there are unique edges $\beta', \alpha' \in E_\Lambda^1$ such that $c(\beta')=
c(\beta)$ and $c(\alpha') = c(\alpha)$ and such that
\[
\begin{tikzpicture}[scale=1.5]
    \node[circle, inner sep=1pt, fill=black] (00) at (0,0) {};
    \node[circle, inner sep=1pt, fill=black] (10) at (1,0) {};
    \node[circle, inner sep=1pt, fill=black] (01) at (0,1) {};
    \node[circle, inner sep=1pt, fill=black] (11) at (1,1) {};
    \draw[-latex] (11)--(01) node [pos=0.5, above] {\small$\alpha'$};
    \draw[-latex] (10)--(00) node [pos=0.5, below] {\small$\alpha$};
    \draw[-latex, dashed] (11)--(10) node [pos=0.5, right] {\small$\beta$};
    \draw[-latex, dashed] (01)--(00) node [pos=0.5, left] {\small$\beta'$};
\end{tikzpicture}
\]
is a commuting diagram in $\Lambda$. we call such a diagram a \emph{square} and we denote by
$\mathcal{C}$ the collection of all such squares. We write $\alpha\beta \sim_{\Cc} \beta'\alpha'$,
or just $\alpha\beta \sim \beta'\alpha'$. We call the list of all such relations the
\emph{factorisation rules} for $E_\Lambda$. It turns out that $\Lambda$ is uniquely determined up
to isomorphism by its skeleton and factorisation rules \cite{FS2002, HazelwoodRaeburnEtAl:xx11}.
Moreover, given a $k$-coloured directed graph $E$ and a collection of factorisation rules of the
form $\alpha\beta \sim \beta'\alpha'$ where $\alpha\beta$ and $\beta'\alpha'$ are bi-coloured paths
of opposite colourings with the same range and source, there exists a $k$-graph with this skeleton
and set of factorisation rules if and only if both of the following conditions are satisfied: (1)
the relation $\sim$ is bijective in the sense that for each $ij$-coloured path $\alpha\beta$, there
is exactly one $ji$-coloured path $\beta'\alpha'$ such that $\alpha\beta \sim \beta'\alpha'$; and
(2) if $\alpha\beta \sim \beta^1\alpha^1$, $\alpha^1\gamma \sim \gamma^1\alpha^2$ and
$\beta^1\gamma^1 \sim \gamma^2\beta^2$, and if $\beta\gamma \sim \gamma_1\beta_1$, $\alpha\gamma_1
\sim \gamma_2\alpha_1$ and $\alpha_1\beta_1 \sim \beta_2\alpha_2$, then $\alpha^2 = \alpha_2$,
$\beta^2 = \beta_2$ and $\gamma^2 = \gamma_2$. Observe that~(2) is vacuous unless $\alpha,\beta$
and $\gamma$ are of three distinct colours, so if $k = 2$, then condition~(1) by itself
characterises those lists of factorisation rules which determine $2$-graphs.

If $E_\Lambda$ has the property that given any two vertices $v,w$ and any two colours $i,j \le k$,
there is at most one path $fg$ from $w$ to $v$ such that $c(f) = i$ and $c(g) = j$, then there is
just one possible complete collection of squares possible for this skeleton. In this situation, we
just draw the skeleton to specify $\Lambda$, and do not bother to list the squares.

\section{Cycles and generalised cycles}

In this section we present a necessary condition on an arbitrary $k$-graph for its $C^*$-algebra to
be AF.

As with graph $C^*$-algebras, the necessary conditions for $k$-graph $C^*$-algebras to be AF which
we have developed involve the presence of cycles of an appropriate sort in the $k$-graph. To
formulate a result sufficiently general to deal with the examples which we introduce later, we
propose the notion of a \emph{generalised cycle}. We have not been able to construct a non-AF
$k$-graph $C^*$-algebra which could not be recognised as such by the presence of a generalised
cycle in the complement of some hereditary subgraph, but we have no reason to believe that such an
example does not exist. For the origins of the following definition, see~\cite[Lemma~4.3]{EvansPhD}

\begin{dfn}
Let $\Lambda$ be a finitely aligned $k$-graph. A \emph{generalised cycle} in $\Lambda$ is a pair
$(\mu,\nu) \in \Lambda \times \Lambda$ such that $\mu \not= \nu$, $s(\mu) = s(\nu)$, $r(\mu) =
r(\nu)$, and $\MCE(\mu\tau,\nu) \not= \emptyset$ for all $\tau \in s(\mu)\Lambda$.
\end{dfn}

\begin{lemma}\label{lem:gen cycle characterisations}
Let $\Lambda$ be a finitely aligned $k$-graph. Fix a pair $(\mu,\nu) \in \Lambda \times \Lambda$
such that $\mu \not= \nu$, $s(\mu) = s(\nu)$ and $r(\mu) = r(\nu)$. Then the following are
equivalent:
\begin{enumerate}
    \item\label{it:gen cyc} The pair $(\mu,\nu)$ is a generalised cycle;
    \item\label{it:Ext} The set $\Ext(\mu, \{\nu\})$ is exhaustive; and
    \item\label{it:boundary paths} $\{\mu x : x \in s(\mu)\partial\Lambda\} \subseteq \{\nu y :
        y \in s(\nu)\partial\Lambda\}$.
\end{enumerate}
\end{lemma}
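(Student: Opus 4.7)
The plan is to prove the cycle $(1) \Rightarrow (2) \Rightarrow (3) \Rightarrow (1)$, although $(1) \Leftrightarrow (2)$ really comes almost directly from unpacking definitions. Throughout, the key bookkeeping is that $\rho$ lies in $\Ext(\mu,\{\nu\})$ precisely when $\mu\rho \in \MCE(\mu,\nu)$, i.e., when $d(\mu\rho) = d(\mu) \vee d(\nu)$ and $\mu\rho$ has $\nu$ as an initial segment.

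For $(1) \Rightarrow (2)$, fix $\tau \in s(\mu)\Lambda$ and choose $\lambda = \mu\tau\alpha = \nu\beta \in \MCE(\mu\tau,\nu)$ using the generalised-cycle hypothesis. Because $d(\mu) \le d(\mu)\vee d(\nu) \le d(\mu\tau)\vee d(\nu) = d(\lambda)$, the segment $\rho := \lambda(d(\mu), d(\mu)\vee d(\nu))$ is a well-defined path; one checks that $\mu\rho = \lambda(0, d(\mu)\vee d(\nu))$ lies in $\MCE(\mu,\nu)$, so $\rho \in \Ext(\mu,\{\nu\})$. Since both $\tau$ and $\rho$ are initial segments of $\sigma^{d(\mu)}(\lambda) = \tau\alpha$, the further initial segment of degree $d(\tau)\vee d(\rho)$ witnesses $\MCE(\tau,\rho) \ne \emptyset$. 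The converse $(2) \Rightarrow (1)$ runs the same argument in reverse: given $\tau$ and a $\rho \in \Ext(\mu,\{\nu\})$ with a common extension $\eta \in \MCE(\tau,\rho)$, one uses the factorisation $\eta = \tau\tau' = \rho\rho'$ and the identity $d(\mu)+d(\rho) = d(\mu)\vee d(\nu)$ to verify that $\mu\eta \in \MCE(\mu\tau,\nu)$.

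For $(2) \Rightarrow (3)$, the crucial observation is that finite alignment makes $\MCE(\mu,\nu)$, and hence $\Ext(\mu,\{\nu\})$, finite. Given $x \in s(\mu)\partial\Lambda$, the path $\mu x$ lies in $\partial\Lambda$ by \cite[Lemma 5.13]{FMY2005}, and the boundary-path condition applied to the finite exhaustive set $\Ext(\mu,\{\nu\}) \subseteq s(\mu)\Lambda = (\mu x)(d(\mu))\Lambda$ (shifted to position $d(\mu)$) produces a $\rho \in \Ext(\mu,\{\nu\})$ with $x(0, d(\rho)) = \rho$. Then $\mu\rho \in \MCE(\mu,\nu)$ has $\nu$ as an initial segment, so $\mu x$ also has $\nu$ as an initial segment, and $y := \sigma^{d(\nu)}(\mu x) \in s(\nu)\partial\Lambda$ satisfies $\nu y = \mu x$.

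For $(3) \Rightarrow (2)$, fix $\lambda \in s(\mu)\Lambda$ and pick any $x \in s(\lambda)\partial\Lambda$, which exists by \cite[Lemma 5.15]{FMY2005}; then $\lambda x \in s(\mu)\partial\Lambda$, so $(3)$ supplies $y$ with $\mu\lambda x = \nu y$. Setting $p := d(\mu)\vee d(\nu)$, the prefix $\mu\lambda x(0,p)$ exists (since both $\mu$ and $\nu$ are prefixes of $\mu\lambda x$), factors as $\mu\rho = \nu\beta$ with $\rho \in \Ext(\mu,\{\nu\})$, and by shifting by $d(\mu)$ both $\lambda$ and $\rho$ become initial segments of $\lambda x$; the initial segment of $\lambda x$ of degree $d(\lambda)\vee d(\rho)$ then lies in $\MCE(\lambda,\rho)$.

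The only real obstacle is arithmetic vigilance in the $\vee$-manipulations to ensure all the stated prefixes genuinely exist (e.g.\ that $d(\mu)\vee d(\nu) \le d(\mu\lambda x)$ and that $d(\rho) \le d(\lambda x)$); once these are dispatched, each implication is a short extraction of the required $\rho$ from a common extension or boundary path.
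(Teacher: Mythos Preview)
Your proof is correct and follows essentially the same route as the paper. The only cosmetic differences are: the paper invokes \cite[Proposition~3.12]{FMY2005} (the identity $\Ext(\mu\lambda,\{\nu\}) = \Ext(\lambda;\Ext(\mu,\{\nu\}))$) to get $(1)\Rightarrow(2)$ in one line, whereas you unpack this by extracting $\rho = \lambda(d(\mu), d(\mu)\vee d(\nu))$ directly; and the paper closes the cycle with $(3)\Rightarrow(1)$ (reading off $(\mu\lambda x)(0, d(\mu\lambda)\vee d(\nu)) \in \MCE(\mu\lambda,\nu)$) rather than your $(3)\Rightarrow(2)\Rightarrow(1)$.
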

\begin{proof}
Suppose that $(\mu,\nu)$ is a generalised cycle. Fix $\lambda \in s(\mu)\Lambda$. Then
$\MCE(\mu\lambda,\nu) \not= \emptyset$, and hence $\Ext(\mu\lambda, \{\nu\}) \not= \emptyset$. By
\cite[Proposition~3.12]{FMY2005}, we have
\[
    \Ext(\mu\lambda, \{\nu\}) = \Ext(\lambda; \Ext(\mu, \{\nu\})),
\]
and hence there exists $\alpha \in \Ext(\mu, \{\nu\})$ such that $\MCE(\lambda,\alpha) \not=
\emptyset$. Hence $\Ext(\mu, \{\nu\})$ is exhaustive. This proves \mbox{(\ref{it:gen
cyc})${}\implies{}$(\ref{it:Ext})}.

Now suppose that $\Ext(\mu, \{\nu\})$ is exhaustive. Since $\Lambda$ is finitely aligned,
$\Ext(\mu, \{\nu\})$ is also finite, and hence it is a finite exhaustive subset of $s(\mu)\Lambda$.
Fix $x \in s(\mu)\partial\Lambda$. By definition of $\partial\Lambda$ there exists $\alpha \in
\Ext(\mu,\{\nu\})$ such that $x(0, d(\alpha)) = \alpha$. Hence $(\mu x)(0, d(\mu) \vee d(\nu)) =
\mu\alpha \in \MCE(\mu,\nu)$, and it follows that $(\mu x)(0, d(\nu)) = (\mu\alpha)(0, d(\nu)) =
\nu$. Thus $y := \sigma^{d(\nu)}(\mu x)$ satisfies $y \in s(\nu) \partial\Lambda$ and $\mu x = \nu
y$. This proves \mbox{(\ref{it:Ext})${}\implies{}$(\ref{it:boundary paths})}.

Finally suppose that $\{\mu x : x \in s(\mu)\partial\Lambda\} \subseteq \{\nu y : y \in
s(\nu)\partial\Lambda\}$. Fix $\tau \in s(\mu)\Lambda$. Since $s(\tau) \partial\Lambda \not=
\emptyset$ \cite[Lemma~5.15]{FMY2005}, we may fix $z \in s(\tau)\partial\Lambda$, and then $x :=
\tau z \in s(\mu) \partial\Lambda$ also \cite[Lemma~5.13]{FMY2005}. By hypothesis, we then have
$\mu x = \nu y$ for some $y \in s(\nu)\partial\Lambda$. In particular, $(\mu x)(0, d(\mu\tau) \vee
d(\nu)) \in \MCE(\mu\tau, \nu)$, and hence the latter is nonempty. This proves
\mbox{(\ref{it:boundary paths})${}\implies{}$(\ref{it:gen cyc})}.
\end{proof}

In the language of \cite{FMY2005}, condition~(\ref{it:boundary paths}) of Lemma~\ref{lem:gen cycle
characterisations} says that the cylinder sets $Z(\mu)$ and $Z(\nu)$ are nested: $Z(\mu) \subseteq
Z(\nu)$.

For the remainder of the paper, the term \emph{cycle}, as distinct from \emph{generalised cycle},
will continue to refer to a path $\lambda \in \Lambda\setminus\Lambda^0$ such that $r(\lambda) =
s(\lambda)$. When
--- as in Section~\ref{sec:unital} --- we wish to emphasise that we mean a cycle in the traditional
sense, rather than a generalised cycle, we will also use the term \emph{conventional cycle.}

To see where the definition of a generalised cycle comes from, observe that if $\lambda$ is a
conventional cycle in a $k$-graph, then $(\lambda, r(\lambda))$ is a generalised cycle. There are
plenty of examples of $k$-graphs containing generalised cycles but no cycles (see
Example~\ref{non-AF aper}), but when $k = 1$, the two notions more or less coincide:

\begin{lemma}\label{lem:cycle discussion}
Let $\Lambda$ be a $1$-graph. Suppose that $(\mu,\nu)$ is a generalised cycle in $\Lambda$. Then
there is a conventional cycle $\lambda \in \Lambda \setminus \Lambda^0$ such that either $\mu =
\nu\lambda$ or $\nu = \mu\lambda$.
\end{lemma}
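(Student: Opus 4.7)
The plan is to exploit the fact that in a $1$-graph, minimal common extensions degenerate: since $d(\mu)\vee d(\nu)=\max(d(\mu),d(\nu))$ is just the larger of the two natural numbers, any element of $\MCE(\mu,\nu)$ must coincide with whichever of $\mu,\nu$ has the larger degree, forcing the other to be an initial segment of it.

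First I would plug $\tau = s(\mu)$ into the defining property of a generalised cycle to obtain $\MCE(\mu,\nu)\ne\emptyset$. Pick $\lambda_0\in\MCE(\mu,\nu)$; by definition $d(\lambda_0)=d(\mu)\vee d(\nu)$. Split into two cases according to which of $d(\mu), d(\nu)$ is larger. If $d(\mu)\ge d(\nu)$, then $d(\lambda_0)=d(\mu)$ together with $\lambda_0(0,d(\mu))=\mu$ forces $\lambda_0=\mu$, and $\lambda_0(0,d(\nu))=\nu$ then says $\nu=\mu(0,d(\nu))$ is an initial segment of $\mu$; by the factorisation property write $\mu=\nu\lambda$ with $d(\lambda)=d(\mu)-d(\nu)$. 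If instead $d(\nu)>d(\mu)$, the symmetric argument (which we can run directly, since $\MCE(\mu,\nu)=\MCE(\nu,\mu)$) yields $\nu=\mu\lambda$ with $d(\lambda)=d(\nu)-d(\mu)$.

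To finish, I check the two properties demanded of $\lambda$. Since $\mu\ne\nu$, the difference $|d(\mu)-d(\nu)|>0$, so in either case $d(\lambda)>0$ and in particular $\lambda\notin\Lambda^0$. For the cycle condition, in the first case $r(\lambda)=s(\nu)$ and $s(\lambda)=s(\mu)$, and these coincide because $s(\mu)=s(\nu)$ by hypothesis; the second case is identical with the roles of $\mu$ and $\nu$ interchanged. Thus $\lambda$ is a conventional cycle satisfying the required equation.

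I do not expect any real obstacle: the content of the argument is just the observation that in rank $1$ the only obstruction to one of $\mu,\nu$ being a prefix of the other is $\MCE(\mu,\nu)=\emptyset$, which the generalised cycle hypothesis (applied with $\tau=s(\mu)$) rules out. The higher-order clauses in the definition of a generalised cycle (the $\MCE(\mu\tau,\nu)\ne\emptyset$ condition for \emph{all} $\tau$) are not needed beyond the case $\tau=s(\mu)$; this is consistent with the remark in the paper that for $k=1$ the two notions essentially coincide.
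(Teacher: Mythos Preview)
Your proof is correct and follows essentially the same approach as the paper's: both apply the generalised-cycle condition with $\tau=s(\mu)$ to obtain $\MCE(\mu,\nu)\ne\emptyset$, then use the total order on $\NN$ to conclude that one of $\mu,\nu$ is an initial segment of the other. The only cosmetic difference is that the paper rules out $d(\mu)=d(\nu)$ up front before splitting into cases, whereas you fold this into the observation that $d(\lambda)=|d(\mu)-d(\nu)|>0$ at the end.
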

\begin{proof}
Since $\Lambda$ is a $1$-graph, either $d(\mu) \le d(\nu)$ or vice versa. We will assume that
$d(\mu) \le d(\nu)$ and show that $\nu = \mu\lambda$ for some conventional cycle $\lambda$; if
instead $d(\nu) \le d(\mu)$ then the same argument gives $\nu = \mu\lambda$. If $d(\mu) = d(\nu)$,
then $\MCE(\mu,\nu) \not= \emptyset$ forces $\mu = \nu$ which is impossible for a generalised
cycle, so $d(\mu) < d(\nu)$. Then $\tau := s(\mu) \in s(\mu)\Lambda$ satisfies $\MCE(\mu\tau, \nu)
\not=\emptyset$. This forces $\nu = \mu\lambda$ for some $\lambda$. Now $r(\lambda) = s(\mu)$ and
$s(\lambda) = s(\nu) = s(\mu)$, so $\lambda$ is a conventional cycle.
\end{proof}

The main result in this section is the following.

\begin{theorem}\label{thm:main necessary}
Let $\Lambda$ be a finitely aligned $k$-graph. If $C^*(\Lambda)$ is AF, then $\Lambda$ contains no
generalised cycles.
\end{theorem}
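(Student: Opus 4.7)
The plan is to argue the contrapositive: given a generalised cycle $(\mu,\nu)$ in $\Lambda$, I would construct two projections in $C^*(\Lambda)$ that are Murray-von Neumann equivalent but differ by a gauge-weighted unitary, and use this to obstruct AF-ness by finiteness and then by a determinant-style argument.

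First I would extract the computational content of Lemma~\ref{lem:gen cycle characterisations}(\ref{it:Ext}): the set $E := \Ext(\mu,\{\nu\})$ is finite and exhaustive in $s(\mu)\Lambda$, and by the very definition of $\Ext$ every $\delta \in E$ has the common degree $(d(\mu)\vee d(\nu)) - d(\mu)$. Because two distinct paths of equal degree have empty $\MCE$ (by the factorisation property), the projections $\{s_\delta s_\delta^* : \delta \in E\}$ are pairwise orthogonal, and (\ref{it:CK4}) collapses into the clean identity $\sum_{\delta \in E} s_\delta s_\delta^* = s_{s(\mu)}$. Next, using (\ref{it:CK3}) to expand $s_\nu^* s_\mu = \sum_{\nu\gamma = \mu\delta \in \MCE(\nu,\mu)} s_\gamma s_\delta^*$ and collapsing $s_\nu s_\gamma = s_{\nu\gamma} = s_{\mu\delta}$, everything telescopes to $s_\nu s_\nu^* s_\mu = s_\mu \sum_{\delta\in E} s_\delta s_\delta^* = s_\mu$, whence $s_\mu s_\mu^* \le s_\nu s_\nu^*$. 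Setting $v := s_\nu s_\mu^*$, a direct calculation using $s_\mu^* s_\mu = s_{s(\mu)} = s_\nu^* s_\nu$ gives $v^*v = s_\mu s_\mu^*$ and $vv^* = s_\nu s_\nu^*$, so $s_\mu s_\mu^* \sim s_\nu s_\nu^*$ in the sense of Murray-von Neumann.

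Assuming $C^*(\Lambda)$ is AF, every projection is finite; the subprojection inequality together with the equivalence then forces $s_\mu s_\mu^* = s_\nu s_\nu^* =: p$, and $v$ becomes a unitary in the corner $pC^*(\Lambda)p$ with $\gamma_z(v) = z^{d(\nu) - d(\mu)} v$. A generalised cycle cannot have $d(\mu) = d(\nu)$, because the case $\tau = s(\mu)$ of the definition would require $\MCE(\mu,\nu) \neq \emptyset$ while distinct paths of the same degree have empty $\MCE$; so $m := d(\nu) - d(\mu)$ is a nonzero element of $\ZZ^k$.

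The main obstacle is closing the argument from this point: I need to show that an AF algebra cannot carry a unitary in a corner lying in the spectral subspace $A^m$ for $m \neq 0$. My plan is a determinant argument combined with finite-dimensional approximation. First, applying the gauge-averaging conditional expectation $E_m(a) := \int_{\TT^k} z^{-m} \gamma_z(a)\,dz$ to a fin-dim approximant of $v$, and then polishing via functional calculus, one should produce a unitary $v_n$ in a corner of a gauge-invariant finite-dimensional subalgebra $B_n \subseteq C^*(\Lambda)$ still satisfying $\gamma_z(v_n) = z^m v_n$. On any simple summand $\Mm_{n_i}$ of $B_n$ the continuous $\TT^k$-action is implemented by a (projective) representation $u_z$; the equation $u_z v_n u_z^* = z^m v_n$ has determinant $\det(v_n) = z^{mn_i}\det(v_n)$, forcing $z^{mn_i}=1$ for every $z \in \TT^k$ and hence $m = 0$, a contradiction. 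The technical hard part is arranging a genuinely $\gamma$-invariant fin-dim filtration of the AF algebra $C^*(\Lambda)$ in which $p$ and $v$ can be well approximated; this is where I expect any real work to lie, since standard Bratteli filtrations of $C^*(\Lambda)$ need not be gauge-invariant.
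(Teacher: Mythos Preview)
Your reduction to a unitary $v = s_\nu s_\mu^* \in p\,C^*(\Lambda)\,p$ with $\gamma_z(v) = z^{m} v$ for $m = d(\nu)-d(\mu) \neq 0$ is correct, and is in fact slightly slicker than the paper's argument: the paper splits into the cases where $(\mu,\nu)$ does or does not have an entrance (Corollary~\ref{cor:inf proj} versus Lemma~\ref{lem:compare projs}), whereas you absorb both into the single observation that stable finiteness of AF algebras forces $s_\mu s_\mu^* = s_\nu s_\nu^*$ once the two are Murray--von~Neumann equivalent and comparable.

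The gap is in your endgame, and it is not merely a technicality to be filled in. Your determinant argument needs $v$ to be approximated by a unitary lying in a \emph{gauge-invariant} finite-dimensional subalgebra $B_n$, but the hypothesised AF filtration $\{A_n\}$ has no reason to be compatible with $\gamma$: applying the spectral projection $E_m$ to an approximant $a \in A_n$ lands in the $m$-spectral subspace of $C^*(\Lambda)$, not in $A_n$ or any other finite-dimensional subalgebra, and there is no averaging procedure that turns $A_n$ into a $\gamma$-invariant finite-dimensional algebra. No general result guarantees that an AF algebra carrying a torus action admits an invariant Bratteli filtration, so the determinant computation has nothing to act on.

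The paper closes the argument by a different route that requires no compatibility between the AF structure and $\gamma$ (Propositions~\ref{prp:gauge unitaries} and~\ref{prp:arbitrary partial unitary}). Restrict $\gamma$ to a one-parameter subgroup $\beta_z := \gamma_{\iota_\phi(z)}$ chosen so that $\beta_z(v) = z^N v$ with $N \neq 0$. The corner $p\,C^*(\Lambda)\,p$, being unital AF, carries some tracial state; averaging it over $\beta$ yields a $\beta$-invariant trace $\tau$. If $\delta$ is the generator of $\beta$, then $\tau(v^*\delta(v)) = 2\pi i N \neq 0$, whereas by \cite{PW1978} one has $\tau(V^*\delta(V)) = 0$ for every unitary $V \in \Dd(\delta)$ in the connected component of the identity. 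Since the unitary group of a unital AF algebra is connected, this is the contradiction. The obstruction is thus detected by a trace and connectedness of $\mathcal{U}(A)$, not by any finite-dimensional approximation.
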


The proof deals separately with two cases. To delineate the cases, we introduce the notion of an
entrance to a generalised cycle.

\begin{dfn}
Let $\Lambda$ be a finitely aligned $k$-graph. An \emph{entrance} to a generalised cycle
$(\mu,\nu)$ is a path $\tau \in s(\nu)\Lambda$ such that $\MCE(\nu\tau, \mu) = \emptyset$.
\end{dfn}

If $\lambda$ is a conventional cycle then an \emph{entrance to the conventional cycle} $\lambda$
means an entrance to the associated generalised cycle $(\lambda, r(\lambda))$; that is a path $\tau
\in r(\lambda) \Lambda$ such that $\MCE(\tau, \lambda) = \emptyset$.

\begin{rmk}\label{rmk:FMY lemma}
A generalised cycle $(\mu,\nu)$ has an entrance if and only if the reversed pair $(\nu,\mu)$ is not
a generalised cycle.
\end{rmk}

\begin{lemma}\label{lem:compare projs}
Suppose that $(\mu,\nu)$ is a generalised cycle. Then $s_\mu s_\mu^* \le s_\nu s_\nu^*$. Moreover,
$s_\mu s_\mu^* = s_\nu s_\nu^*$ if and only if the generalised cycle $(\mu,\nu)$ has no entrance.
\end{lemma}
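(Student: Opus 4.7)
The plan is to reduce the inequality $s_\mu s_\mu^* \le s_\nu s_\nu^*$ to proving $s_\nu s_\nu^* s_\mu s_\mu^* = s_\mu s_\mu^*$, which I establish by a direct computation using (CK3) and (CK4), and then to derive the equality criterion from this calculation by symmetry and a short entrance-witnessing argument.

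First I would apply (CK3) to expand $s_\nu^* s_\mu = \sum_{\lambda \in \MCE(\mu,\nu)} s_{\beta_\lambda} s_{\alpha_\lambda}^*$, where $\lambda = \nu\beta_\lambda = \mu\alpha_\lambda$ for each $\lambda \in \MCE(\mu,\nu)$. Multiplying on the left by $s_\nu$ and on the right by $s_\mu^*$ and using $s_\nu s_{\beta_\lambda} = s_\lambda$ and $s_\mu s_{\alpha_\lambda} = s_\lambda$ telescopes to
\[
s_\nu s_\nu^* s_\mu s_\mu^* \;=\; \sum_{\lambda \in \MCE(\mu,\nu)} s_\lambda s_\lambda^* \;=\; s_\mu \Big(\sum_{\alpha \in F} s_\alpha s_\alpha^*\Big) s_\mu^*,
\]
where $F := \Ext(\mu,\{\nu\})$. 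The key structural observation is that every $\alpha \in F$ has the same degree $d(\mu)\vee d(\nu) - d(\mu)$, so for distinct $\alpha,\beta \in F$ one has $\MCE(\alpha,\beta) = \emptyset$, and (CK3) forces $s_\alpha s_\alpha^* \perp s_\beta s_\beta^*$. Consequently $Q := \sum_{\alpha \in F} s_\alpha s_\alpha^*$ is an honest projection dominated by $s_{s(\mu)}$.

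Next I would prove $Q = s_{s(\mu)}$. By Lemma~\ref{lem:gen cycle characterisations} the set $F$ is exhaustive, and it is finite by finite alignment, so (CK4) gives $\prod_{\alpha \in F}(s_{s(\mu)} - s_\alpha s_\alpha^*) = 0$. Because the $s_\alpha s_\alpha^*$ are mutually orthogonal, the factors $s_{s(\mu)} - s_\alpha s_\alpha^*$ mutually commute, and expanding the product collapses (all terms with two or more distinct $\alpha$'s vanish) to $s_{s(\mu)} - Q$. Hence $Q = s_{s(\mu)}$, and substituting back yields $s_\nu s_\nu^* s_\mu s_\mu^* = s_\mu s_{s(\mu)} s_\mu^* = s_\mu s_\mu^*$, which is precisely $s_\mu s_\mu^* \le s_\nu s_\nu^*$.

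For the equivalence, the ``no entrance'' implication is immediate: by Remark~\ref{rmk:FMY lemma} the absence of an entrance to $(\mu,\nu)$ means $(\nu,\mu)$ is also a generalised cycle, so applying the above with the roles of $\mu$ and $\nu$ swapped gives $s_\nu s_\nu^* \le s_\mu s_\mu^*$. Conversely, if $\tau \in s(\nu)\Lambda$ is an entrance, then $\MCE(\mu,\nu\tau) = \MCE(\nu\tau,\mu) = \emptyset$, so (CK3) yields $s_\mu^* s_{\nu\tau} = 0$, whence $s_\mu s_\mu^* \cdot s_{\nu\tau} s_{\nu\tau}^* = 0$; but $s_{\nu\tau} s_{\nu\tau}^*$ is a nonzero subprojection of $s_\nu s_\nu^*$ (the universal generators for finitely aligned $k$-graphs are nonzero), so $s_\mu s_\mu^* \ne s_\nu s_\nu^*$.

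The step I expect to be the main technical point is the common-degree/pairwise-orthogonality observation about $F$: without it, the (CK4) product would expand as an unwieldy inclusion–exclusion sum of noncommuting projections, and the clean identification $Q = s_{s(\mu)}$ would be lost. Everything else is bookkeeping with the Cuntz–Krieger relations.
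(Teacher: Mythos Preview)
Your proof is correct and follows essentially the same route as the paper's: both arguments hinge on the fact that $F=\Ext(\mu,\{\nu\})$ consists of paths of the common degree $d(\mu)\vee d(\nu)-d(\mu)$ (hence mutually orthogonal range projections), so that the (CK4) product collapses to $s_{s(\mu)}-\sum_{\alpha\in F}s_\alpha s_\alpha^*=0$, after which the inequality and the entrance/no-entrance dichotomy are handled exactly as you describe. The only cosmetic difference is that the paper first derives $s_\mu s_\mu^*=\sum_{\alpha\in F}s_{\mu\alpha}s_{\mu\alpha}^*$ and then observes each summand is dominated by $s_\nu s_\nu^*$, whereas you compute the product $s_\nu s_\nu^* s_\mu s_\mu^*$ via (CK3) first; the underlying computation is the same.
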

\begin{proof}
Since $\Ext(\mu, \{\nu\}) \subset s(\mu)\Lambda^{(d(\mu) \vee d(\nu)) - d(\mu)}$, for distinct
$\alpha,\beta \in \Ext(\mu,\{\nu\})$, we have $s_\alpha s^*_\alpha s_\beta s^*_\beta = 0$. In
particular, applying~(\ref{it:CK4}),
\[
0   = \prod_{\alpha \in \Ext(\mu,\{\nu\})} (s_{s(\mu)} - s_\alpha s^*_\alpha)
    = s_{s(\mu)} - \sum_{\alpha \in \Ext(\mu,\{\nu\})} s_\alpha s^*_\alpha.
\]
Hence
\[
s_\mu s^*_\mu
    = s_\mu s_{s(\mu)} s^*_\mu
    = \sum_{\alpha \in \Ext(\mu,\{\nu\})} s_{\mu\alpha} s^*_{\mu\alpha}.
\]
For each $\alpha \in \Ext(\mu,\{\nu\})$, we have $\mu\alpha = \nu\beta$ for some $\beta \in
\Lambda$, and hence $s_{\mu\alpha} s^*_{\mu\alpha} = s_\nu (s_\beta s^*_\beta) s^*_{\nu} \le s_\nu
s^*_\nu$, giving $s_\mu s^*_\mu \le s_\nu s^*_\nu$.

Suppose that the generalised cycle $(\mu,\nu)$ has no entrance. Then $(\nu,\mu)$ is also a
generalised cycle, and the preceding paragraph gives $s_\nu s^*_\nu \le s_\mu s^*_\mu$ also.

Now suppose that the generalised cycle $(\mu,\nu)$ has an entrance $\tau$; so $\MCE(\nu\tau,\mu) =
\emptyset$. Then $s_{\nu\tau} s^*_{\nu\tau} \le s_\nu s^*_\nu$ and
\[
s_{\nu\tau} s^*_{\nu\tau} s_\mu s^*_\mu
    = \sum_{\lambda \in \MCE(\nu\tau,\mu)} s_\lambda s^*_\lambda = 0.
\]
Hence $s_\nu s^*_\nu - s_\mu s^*_\mu \ge s_{\nu\tau} s^*_{\nu\tau} > 0$.
\end{proof}

\begin{cor}[{\cite[Lemma~4.3]{EvansPhD}}]\label{cor:inf proj}
Let $\Lambda$ be a finitely aligned $k$-graph which contains a generalised cycle with an entrance.
Then $C^*(\Lambda)$ contains an infinite projection. In particular $C^*(\Lambda)$ is not AF.
\end{cor}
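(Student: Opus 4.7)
The plan is to exhibit $s_\nu s_\nu^*$ as an infinite projection, which by stable finiteness of AF algebras immediately precludes $C^*(\Lambda)$ being AF.

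First I would invoke Lemma~\ref{lem:compare projs} directly: since $(\mu,\nu)$ is a generalised cycle with an entrance, Lemma~\ref{lem:compare projs} yields $s_\mu s_\mu^* \le s_\nu s_\nu^*$ with strict inequality (indeed, the proof of Lemma~\ref{lem:compare projs} explicitly produces a nonzero projection, $s_{\nu\tau} s_{\nu\tau}^*$, below $s_\nu s_\nu^* - s_\mu s_\mu^*$).

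Next I would construct a Murray-von~Neumann equivalence between these two projections by means of the partial isometry $w := s_\nu s_\mu^*$. Using that $s(\mu) = s(\nu)$ together with the consequence~(\ref{it:CK3'}) of~(\ref{it:CK3}), I compute
\[
w^* w = s_\mu s_\nu^* s_\nu s_\mu^* = s_\mu s_{s(\nu)} s_\mu^* = s_\mu s_{s(\mu)} s_\mu^* = s_\mu s_\mu^*,
\]
and similarly
\[
w w^* = s_\nu s_\mu^* s_\mu s_\nu^* = s_\nu s_{s(\mu)} s_\nu^* = s_\nu s_\nu^*.
\]
Hence $s_\mu s_\mu^* \sim s_\nu s_\nu^*$ in $C^*(\Lambda)$, exhibiting $s_\nu s_\nu^*$ as Murray--von~Neumann equivalent to a proper subprojection of itself. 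Therefore $s_\nu s_\nu^*$ is an infinite projection in $C^*(\Lambda)$.

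To finish, I would appeal to the standard fact that every AF $C^*$-algebra is stably finite, so in particular contains no infinite projections. Since $C^*(\Lambda)$ does contain such a projection, it cannot be AF. I do not anticipate any serious obstacle here: Lemma~\ref{lem:compare projs} has already done the analytical work of comparing the ranges of $s_\mu$ and $s_\nu$, and the Murray--von~Neumann equivalence above is a direct computation using~(\ref{it:CK3'}) and the hypothesis $s(\mu) = s(\nu)$.
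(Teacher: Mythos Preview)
Your proof is correct and is essentially identical to the paper's own argument: the paper writes the one-line chain $s_\nu s^*_\nu > s_\mu s^*_\mu = s_\mu s^*_\nu s_\nu s^*_\mu \sim s_\nu s^*_\mu s_\mu s^*_\nu = s_\nu s^*_\nu$, which is exactly your computation with $w = s_\nu s_\mu^*$ as the implementing partial isometry. You are simply more explicit about invoking (\ref{it:CK3'}) and about why AF precludes infinite projections.
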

\begin{proof}
Let $(\mu,\nu)$ be the generalised cycle with an entrance. By Lemma~\ref{lem:compare projs}, we
have
\[
s_\nu s^*_\nu
    > s_\mu s^*_\mu
    = s_\mu s^*_\nu s_\nu s^*_\mu
    \sim s_\nu s^*_\mu s_\mu s^*_\nu
    = s_\nu s^*_\nu.
\]
Hence $s_\nu s^*_\nu$ is an infinite projection. The last statement follows immediately.
\end{proof}

We must now show that when $\Lambda$ contains a generalised cycle with no entrance, $C^*(\Lambda)$
is not AF. The following result is the key step. The argument is essentially that of
\cite[Proposition~4.4.1]{BEH1980}, and we thank George Elliott for directing our attention to
\cite{BEH1980}.

\begin{prop}\label{prp:gauge unitaries}
Let $A$ be a unital $C^*$-algebra carrying a normalised trace $T$, and let $\beta : \TT \to
\Aut(A)$ be a strongly continuous action. Let $U$ be a unitary in $A$, and suppose that there
exists $n \in \ZZ\setminus\{0\}$ satisfying $\beta_z(U) = z^n U$ for all $z \in \TT$. Then $U$ does
not belong to the connected component of the identity in the unitary group $\mathcal{U}(A)$.
\end{prop}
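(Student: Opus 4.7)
The plan is to construct, from $T$, a winding-number invariant on $\mathcal{U}_0(A)$ (the connected component of $1$) that forces a contradiction under the hypothesis $U \in \mathcal{U}_0(A)$. First, I would average $T$ over the action to produce the $\beta$-invariant normalised trace
\[
\tilde T(a) := \int_\TT T(\beta_z(a))\,dz.
\]

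The main tool is the de la Harpe--Skandalis determinant $\Delta_{\tilde T}$, defined by $\Delta_{\tilde T}(W) := (2\pi i)^{-1}\int_0^1 \tilde T(\dot\gamma(t)\gamma(t)^{-1})\,dt$ for any $W \in \mathcal{U}_0(A)$ and any piecewise-smooth path $\gamma : [0,1] \to \mathcal{U}(A)$ from $1$ to $W$; such a path exists because $W$ is a finite product of exponentials of self-adjoints. Standard manipulations (using the trace property of $\tilde T$ to handle path-independence) show that $\Delta_{\tilde T}$ is a well-defined group homomorphism from $\mathcal{U}_0(A)$ into $\CC/\tilde T_*(K_0(A))$. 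Moreover, since $\beta_z \circ \gamma$ is a path from $1$ to $\beta_z(W)$, the $\beta$-invariance of $\tilde T$ yields $\Delta_{\tilde T}(\beta_z(W)) = \Delta_{\tilde T}(W)$ for all $W \in \mathcal{U}_0(A)$ and $z \in \TT$.

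Assume toward a contradiction that $U \in \mathcal{U}_0(A)$. Then $\beta_z(U) = z^n U$, combined with the $\beta$-invariance and the homomorphism property of $\Delta_{\tilde T}$, gives
\[
\Delta_{\tilde T}(U) = \Delta_{\tilde T}(\beta_z(U)) = \Delta_{\tilde T}(z^n \cdot 1_A) + \Delta_{\tilde T}(U),
\]
so $\Delta_{\tilde T}(z^n \cdot 1_A) \equiv 0 \pmod{\tilde T_*(K_0(A))}$ for every $z \in \TT$. Parametrising $z = e^{2\pi i s}$ with $s \in [0,1]$ and using the explicit path $t \mapsto e^{2\pi i n s t}$ from $1$ to $z^n \cdot 1_A$, a direct computation yields $\Delta_{\tilde T}(z^n \cdot 1_A) = ns$. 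Hence $ns \in \tilde T_*(K_0(A))$ for every $s \in [0,1]$; but since $\tilde T_*(K_0(A))$ is a countable subgroup of $\RR$ (using separability of $A$, which holds in the intended application to $k$-graph $C^*$-algebras), while $\{ns : s \in [0,1]\}$ is an interval of positive length, this is incompatible with $n \neq 0$. The principal technical burden lies in establishing the well-definedness and $\beta$-equivariance of $\Delta_{\tilde T}$, both of which are standard features of the de la Harpe--Skandalis calculus as used in \cite{BEH1980}; once these are in hand, the rest is the displayed calculation.
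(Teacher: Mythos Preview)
Your argument is sound but, as you yourself note, it proves a weaker statement than the proposition: you need $\tilde T_*(K_0(A))$ to be a proper subgroup of $\RR$, which you obtain from separability of $A$. The proposition as stated makes no separability hypothesis, so strictly speaking there is a gap.

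The paper's proof avoids this by using a sharper invariant. Rather than the de la Harpe--Skandalis determinant (which is only well-defined modulo $\tilde T_*(K_0(A))$), the paper passes to the associated $\RR$-action $\alpha_t := \beta_{e^{2\pi i t}}$ with infinitesimal generator $\delta$, and considers the number $\tau(V^*\delta(V)) \in \CC$ for unitaries $V \in \Dd(\delta)$, where $\tau$ is your averaged trace $\tilde T$. A result of Pusz and Woronowicz \cite[p.~281]{PW1978} gives $\tau(V^*\delta(V)) = 0$ whenever $V \in \mathcal{U}_0(A) \cap \Dd(\delta)$, with no countability assumption on $K_0$. Since $\beta_z(U) = z^n U$ yields $\delta(U) = 2n\pi i\, U$ directly, one gets $\tau(U^*\delta(U)) = 2n\pi i \neq 0$, and the contradiction is immediate. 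The two arguments share the same skeleton (average the trace, exploit the eigenvector property of $U$), but the paper's invariant is $\CC$-valued rather than $\CC/\tilde T_*(K_0(A))$-valued, which is what buys the extra generality.
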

\begin{proof}
Let $\alpha : \mathbb{R} \to \Aut(A)$ be the action determined by $\alpha_t(a) := \beta_{e^{2\pi\im
t}}(a)$. Let $\Dd(\delta) := \big\{a \in A : \lim_{t \to 0} \frac{1}{t} (\alpha_t(a) - a)\text{
exists}\big\}$, and let $\delta : \Dd(\delta) \to A$ be the generator of $\alpha$; that is
$\delta(a) := \lim_{t \to 0} \frac{1}{t} (\alpha_t(a) - a)$ for $a \in \Dd(\delta)$. Note that
$U\in\Dd(\delta)$ since we have
\begin{equation}\label{eq:delta(U)}
\delta(U)
    = \lim_{t \to 0} \frac{1}{t} (\beta_{e^{2\pi\im t}}(U) - U)
    = \lim_{t \to 0} \frac{e^{2n\pi\im t} -1}{t} U
    = 2n\pi\im U.
\end{equation}
Let $\mu$ denote the normalised Haar measure on $\TT$. Define a map $\tau : A \to \CC$ by $\tau(a)
:= \int_{\TT} T(\beta_z(a))\,d\mu(z)$. We claim that $\tau$ is a normalised $\beta$-invariant (and
hence $\alpha$-invariant) trace on $A$. Given $a \in A$, for each $z \in \TT$ we have
$T(\beta_z(a^*a)) = T(\beta_z(a)^*\beta_z(a)) \ge 0$ as $T$ is a trace. Hence $\tau(a^* a) \ge 0$
so $\tau$ is positive. It is clearly linear, and it satisfies $\tau(1) = 1$ because $\beta$ fixes
1. For $a,b \in A$ we calculate:
\[
\tau(ab)
    = \int_\TT T(\beta_z(a)\beta_z(b))\,d\mu(z)
    = \int_\TT T(\beta_z(b)\beta_z(a))\,d\mu(z)
    = \tau(ba).
\]
So $\tau$ is a trace. Finally, to see that $\tau$ is $\beta$-invariant, note that for $a \in A$, we
have $\tau(\beta_z(a)) = \int_{\TT} T(\beta_w(\beta_z(a)))\,d\mu(w) = \int_\TT
T(\beta_{zw}(a))\,d\mu(w) = \int_\TT \beta_{w'}(a) d\mu(z^{-1}w') = \tau(a)$ by left-invariance of
$\mu$.

It now follows from \cite[p~281, lines 7--16]{PW1978} that for a unitary $V \in \Dd(\delta)$ which
is also  in the connected component $\mathcal{U}_0(A)$ of the identity, we have $\tau(V^*\delta(V))
= 0$.  However, using~\eqref{eq:delta(U)}, we have $\tau(U^*\delta(U)) = \tau(U^* 2n\pi\im U) =
\tau(2n\pi\im 1_A) = 2n\pi\im$, and it follows that $U \not\in \mathcal{U}_0(A)$.
\end{proof}

\begin{prop}\label{prp:arbitrary partial unitary}
Let $\Lambda$ be a finitely aligned $k$-graph, and let $\phi : \ZZ^k \to \ZZ$ be a homomorphism.
Suppose that there exists $N \in \ZZ\setminus\{0\}$ and a partial isometry $V \in \clsp\{s_\mu
s^*_\nu : \mu,\nu \in \Lambda, \phi(d(\mu) - d(\nu)) = N\}$ such that $VV^* = V^*V$. Then
$C^*(\Lambda)$ is not AF.
\end{prop}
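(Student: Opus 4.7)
The plan is to reduce to Proposition~\ref{prp:gauge unitaries} after passing to the corner of $C^*(\Lambda)$ determined by the common support projection $P := VV^* = V^*V$ (tacitly assuming $V \ne 0$, as is clearly intended). First I would use $\phi$ to manufacture a single circle action from the $k$-dimensional gauge action: the dual homomorphism $\hat\phi : \TT \to \TT^k$ given by $\hat\phi(z) := (z^{\phi(e_1)}, \dots, z^{\phi(e_k)})$ satisfies $\hat\phi(z)^n = z^{\phi(n)}$ for all $n \in \ZZ^k$, and composing with the gauge action yields a strongly continuous action $\beta_z := \gamma_{\hat\phi(z)}$ of $\TT$ on $C^*(\Lambda)$ satisfying $\beta_z(s_\mu s^*_\nu) = z^{\phi(d(\mu) - d(\nu))} s_\mu s^*_\nu$ on each spanning monomial. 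Passing to closed linear spans, the hypothesis on $V$ then gives $\beta_z(V) = z^N V$ for every $z \in \TT$.

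A direct computation shows $\beta_z(P) = \beta_z(V)\beta_z(V)^* = z^N V \overline{z^N} V^* = P$, so $\beta$ restricts to a strongly continuous action on the unital $C^*$-algebra $B := PC^*(\Lambda)P$. Because $V = VV^*V$ we have $PVP = V$, so $V$ lies in $B$, and the identities $VV^* = V^*V = P$ make $V$ a unitary in $B$ which still satisfies $\beta_z(V) = z^N V$. I would then assume for contradiction that $C^*(\Lambda)$ is AF. The corner $B$, being a hereditary subalgebra of an AF algebra, is itself AF, and being unital admits a tracial state $T$ (for instance by pulling back a state on the dimension group $K_0(B)$). Proposition~\ref{prp:gauge unitaries} applied to $(B, T, \beta|_B, V, N)$ then forces $V$ to lie outside the connected component $\mathcal{U}_0(B)$ of the identity in the unitary group of $B$.

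This contradicts the well-known fact that the unitary group of any unital AF algebra is path-connected, which can be justified either via $\mathcal{U}(B)/\mathcal{U}_0(B) \cong K_1(B) = 0$ (using stable rank one of AF algebras together with continuity of $K_1$ under direct limits of finite-dimensional $C^*$-algebras), or directly from the density in $\mathcal{U}(B)$ of unitaries from finite-dimensional unital $C^*$-subalgebras, whose unitary groups are already connected. I expect the main difficulty to be entirely bookkeeping: checking $\beta$-invariance of $P$, checking that $V$ sits in the corner as a genuine unitary, and invoking the two standard structural facts about unital AF algebras (existence of a trace, connectedness of the unitary group) in the final step. The genuine creative content is the compression of the $k$-dimensional gauge action to a single circle action via $\phi$, together with the passage to the corner $PC^*(\Lambda)P$, which together bring the hypothesis precisely within reach of Proposition~\ref{prp:gauge unitaries}.
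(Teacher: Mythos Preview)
Your proposal is correct and follows essentially the same argument as the paper: pull back the gauge action along the dual homomorphism $\TT \to \TT^k$ induced by $\phi$, observe that $\beta_z(V) = z^N V$ so $P$ is $\beta$-invariant and $V$ is a unitary in the corner $PC^*(\Lambda)P$, then derive a contradiction from Proposition~\ref{prp:gauge unitaries} using that a unital AF corner carries a trace and has connected unitary group. The paper's proof is slightly terser but uses exactly the same ingredients in the same order.
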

\begin{proof}
Let $P = V^*V$. Then $V$ is a unitary in $P C^*(\Lambda) P$.

For each $i \le k$, let $\phi_i = \phi(e_i)$ so that $\phi(n) = \sum^k_{i=1} \phi_i n_i$ for all $n
\in \ZZ^k$. Define a homomorphism $\iota_\phi : \TT \to \TT^k$ by $\iota(z)_i = z^{\phi_i}$ for $1
\le i \le k$, and define $\beta : \TT \to \Aut(C^*(\Lambda))$ by $\beta_z :=
\gamma_{\iota_\phi(z)}$ for all $z \in \TT$.

For $\mu,\nu \in \Lambda$ we have
\[
\beta_z(s_\mu s^*_\nu)
= \gamma_{\iota_\phi(z)}(s_\mu s^*_\nu)
= \iota_\phi(z)^{d(\mu) - d(\nu)} s_\mu s^*_\nu
= z^{\phi(d(\mu) - d(\nu))} s_\mu s^*_\nu.
\]
In particular, since $V \in \clsp\{s_\mu s^*_\nu : \phi(d(\mu) - d(\nu)) = N\}$, we have
$\beta_z(V) = z^N V$ for all $z \in \TT$ so that $\beta$ fixes $P$ and hence restricts to an action
on $P C^*(\Lambda) P$. Now suppose that $C^*(\Lambda)$ is an AF algebra; we seek a contradiction.
Since corners of AF algebras are AF \cite[Exercise~III.2]{Davidson1996}, $P C^*(\Lambda) P$ is a
unital AF algebra, and hence carries a normalised trace. We may therefore apply
Proposition~\ref{prp:gauge unitaries} to see that $V$ does not belong to the connected component of
the unitary group of $P C^*(\Lambda) P$. This is a contradiction since the unitary group of any
unital AF algebra is connected.
\end{proof}

\begin{proof}[Proof of Theorem~\ref{thm:main necessary}]
We prove the contrapositive statement. Let $(\mu,\nu)$ be a generalised cycle in $\Lambda$. If
$(\mu,\nu)$ has an entrance, then Corollary~\ref{cor:inf proj} implies that $C^*(\Lambda)$ is not
AF. So suppose that $(\mu,\nu)$ has no entrance.

Since $d(\mu) \not= d(\nu)$ there exists $i$ such that $d(\mu)_i \not= d(\nu)_i$. Define $\phi :
\ZZ^k \to \ZZ$ by $\phi(n) := n_i$, let $N := d(\mu)_i - d(\nu)_i$, and let $V := s_\mu s^*_\nu$.
By Lemma~\ref{lem:compare projs}, we have $VV^* = V^*V$, so Proposition~\ref{prp:arbitrary partial
unitary} applied to $V, N, \phi$ implies that $C^*(\Lambda)$ is not AF.
\end{proof}

Using the characterisation of gauge-invariant ideals in $k$-graph algebras of \cite{Sims2006a}, and
using also that quotients of AF algebras are AF, we can extend the main theorem somewhat, at the
expense of a more technical statement. Example~\ref{eg:spine} indicates that the extended result is
genuinely stronger.

\begin{cor}\label{cor:GCs in quotients}
Let $\Lambda$ be a finitely aligned $k$-graph. Suppose that there exists a saturated hereditary
subset $H$ of $\Lambda^0$ such that $\Lambda \setminus \Lambda H$ contains a generalised cycle.
Then $C^*(\Lambda)$ is not AF.

Moreover, given a saturated hereditary subset $H$ of $\Lambda^0$, a pair $(\mu,\nu) \in (\Lambda
\setminus \Lambda H)^2$ is a generalised cycle in $\Lambda \setminus \Lambda H$ if and only if
$d(\mu) \not= d(\nu)$, $s(\mu) = s(\nu)$, $r(\mu) = r(\nu)$, and $\MCE_\Lambda(\nu, \mu\tau)
\setminus \Lambda H \not=\emptyset$ for every $\tau \in \Lambda\setminus\Lambda H$.
\end{cor}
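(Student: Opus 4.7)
The plan is to derive the first assertion from Theorem~\ref{thm:main necessary} applied to the sub-$k$-graph $\Gamma := \Lambda \setminus \Lambda H$, using that $\Gamma$ is again a finitely aligned $k$-graph whose $C^*$-algebra is a quotient of $C^*(\Lambda)$. First I would check that $\Gamma$ inherits a $k$-graph structure: heredity of $H$ gives $s(\lambda)\notin H\Rightarrow r(\lambda)\notin H$, so $\Gamma$ is closed under the range, source and composition maps, and finite alignment is inherited because $\MCE_\Gamma(\alpha,\beta)\subseteq\MCE_\Lambda(\alpha,\beta)$. Since $H$ is saturated and hereditary, the gauge-invariant ideal structure theorem for finitely aligned $k$-graphs from \cite{Sims2006a} yields a surjection $C^*(\Lambda) \twoheadrightarrow C^*(\Gamma)$ sending $s_\lambda \mapsto t_\lambda$ for $\lambda\in\Gamma$ and $s_\lambda\mapsto 0$ otherwise. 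Since quotients of AF algebras are AF, if $C^*(\Lambda)$ were AF then so would be $C^*(\Gamma)$, contradicting Theorem~\ref{thm:main necessary} because $\Gamma$ contains a generalised cycle.

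For the ``moreover'' clause I would simply unpack the definition of a generalised cycle inside $\Gamma$. Any generalised cycle $(\mu,\nu)$ in $\Gamma$ satisfies $\mu\neq\nu$, $s(\mu)=s(\nu)$, $r(\mu)=r(\nu)$, and $\MCE_\Gamma(\mu\tau,\nu)\neq\emptyset$ for all $\tau\in s(\mu)\Gamma$. Taking $\tau = s(\mu)$ shows $\MCE_\Gamma(\mu,\nu)\neq\emptyset$, which together with $d(\mu)=d(\nu)$ would force $\mu=\nu$; hence $d(\mu)\neq d(\nu)$. Conversely, $d(\mu)\neq d(\nu)$ automatically implies $\mu\neq\nu$. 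Using the identity $\MCE_\Gamma(\alpha,\beta) = \MCE_\Lambda(\alpha,\beta)\setminus\Lambda H$ from the background section, the MCE condition then translates to $\MCE_\Lambda(\mu\tau,\nu)\setminus\Lambda H\neq\emptyset$ for all $\tau\in\Lambda\setminus\Lambda H$ with $r(\tau)=s(\mu)$, which is precisely the condition stated in the corollary (the constraint $r(\tau)=s(\mu)$ being implicit in the existence of $\mu\tau$).

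The main obstacle is step two of the first part: the identification $C^*(\Lambda)/I_H\cong C^*(\Gamma)$ in the finitely aligned setting requires verifying that the images of the $s_\lambda$ for $\lambda\in\Gamma$ satisfy condition~(\ref{it:CK4}) for $\Gamma$, which involves finite exhaustive subsets of $\Gamma$ that need not be exhaustive in $\Lambda$. This is precisely what the gauge-invariant ideal theorem of \cite{Sims2006a} handles, and we invoke it rather than reprove it.
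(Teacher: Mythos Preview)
Your proposal is correct and follows essentially the same route as the paper: invoke \cite{Sims2006a} to see that $\Gamma=\Lambda\setminus\Lambda H$ is a finitely aligned $k$-graph whose Cuntz--Krieger algebra is a quotient of $C^*(\Lambda)$, apply Theorem~\ref{thm:main necessary} to $\Gamma$, and use that quotients of AF algebras are AF; then for the ``moreover'' clause use $\MCE_\Gamma(\alpha,\beta)=\MCE_\Lambda(\alpha,\beta)\setminus\Lambda H$ to translate the definition. Your treatment of the second part is in fact slightly more careful than the paper's: you explicitly justify the replacement of ``$\mu\neq\nu$'' in the definition of a generalised cycle by ``$d(\mu)\neq d(\nu)$'' in the statement (via the $\tau=s(\mu)$ argument), whereas the paper's proof passes over this point.
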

\begin{proof}
For the first statement observe that by \cite[Lemma~4.1]{Sims2006a}, $\Lambda \setminus \Lambda H$
is a finitely aligned $k$-graph, and \cite[Corollary~5.3]{Sims2006a} applied with $B =
\operatorname{FE}(\Lambda \setminus \Lambda H) \setminus \mathcal{E}_H$ implies that $C^*(\Lambda
\setminus \Lambda H)$ is a quotient of $C^*(\Lambda)$. If $\Lambda \setminus \Lambda H$ contains a
generalised cycle, then Theorem~\ref{thm:main necessary} implies that $C^*(\Lambda \setminus
\Lambda H)$ is not AF, and since quotients of AF algebras are AF, it follows that $C^*(\Lambda)$ is
not AF either.

For the final statement, observe that
\[
\MCE_{\Lambda \setminus \Lambda H}(\alpha,\beta) = \MCE_\Lambda(\alpha,\beta) \setminus \Lambda H.
\]
So by Remark~\ref{rmk:FMY lemma}, a generalised cycle in $\Lambda \setminus \Lambda H$ is a pair of
distinct paths $(\mu,\nu)$ in $\Lambda \setminus \Lambda H$ with the same range and source such
that for every $\tau \in s(\mu) \Lambda \setminus \Lambda H$, the set $\MCE_\Lambda(\nu,\mu\tau)
\setminus \Lambda H$ is nonempty as claimed.
\end{proof}

Theorem~\ref{thm:main necessary} combined with the results of \cite{LS2010} shows in particular
that aperiodicity of every quotient graph is necessary for $C^*(\Lambda)$ to be AF. We use this to
show that if $C^*(\Lambda)$ is AF, then its ideals are indexed by the saturated hereditary subsets
of $\Lambda^0$.

\begin{prop}
Let $\Lambda$ be a finitely aligned $k$-graph such that $C^*(\Lambda)$ is AF. Then for every
saturated hereditary subset $H$ of $\Lambda$, and every pair $\eta,\zeta$ of distinct paths in
$\Lambda \setminus \Lambda H$, there exists $\tau \in s(\eta)\Lambda \setminus \Lambda H$ such that
$\MCE(\eta\tau,\zeta\tau) \subset \Lambda H$. Moreover every ideal of $C^*(\Lambda)$ is
gauge-invariant.
\end{prop}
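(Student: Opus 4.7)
The plan is to deduce both statements by combining Corollary~\ref{cor:GCs in quotients} with characterisation~(4) of aperiodicity from \cite{LS2010}. The bridge is the observation that a finitely aligned $k$-graph containing no generalised cycle must be aperiodic. To see this, suppose $\Lambda'$ is not aperiodic: choose $\mu \neq \nu$ in $\Lambda'$ with $s(\mu) = s(\nu)$ such that $\MCE(\mu\tau, \nu\tau) \neq \emptyset$ for every $\tau \in s(\mu)\Lambda'$. Setting $\tau = s(\mu)$ yields a common extension of $\mu$ and $\nu$, forcing $r(\mu) = r(\nu)$. For general $\tau$, any $\lambda \in \MCE(\mu\tau, \nu\tau)$ is also a common extension of $\mu\tau$ and $\nu$, so its truncation at degree $d(\mu\tau) \vee d(\nu)$ lies in $\MCE(\mu\tau, \nu)$. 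Hence $(\mu,\nu)$ is a generalised cycle in $\Lambda'$.

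For the first assertion: since $C^*(\Lambda)$ is AF, Corollary~\ref{cor:GCs in quotients} implies that $\Lambda \setminus \Lambda H$ contains no generalised cycle, and so by the previous paragraph is aperiodic. Given distinct $\eta,\zeta \in \Lambda \setminus \Lambda H$, the case $s(\eta) \neq s(\zeta)$ is trivial (any $\tau \in s(\eta)(\Lambda \setminus \Lambda H)$ makes $\zeta\tau$ non-composable, so $\MCE(\eta\tau, \zeta\tau) = \emptyset$); otherwise aperiodicity supplies $\tau \in s(\eta)(\Lambda \setminus \Lambda H)$ with $\MCE_{\Lambda \setminus \Lambda H}(\eta\tau, \zeta\tau) = \emptyset$. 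Using the identity $\MCE_{\Lambda \setminus \Lambda H}(\alpha,\beta) = \MCE_\Lambda(\alpha,\beta) \cap (\Lambda \setminus \Lambda H)$ recalled in Section~2, this reads $\MCE_\Lambda(\eta\tau,\zeta\tau) \subseteq \Lambda H$, as required.

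For the second assertion, let $I$ be an ideal of $C^*(\Lambda)$, let $J := \bigcap_{z \in \TT^k} \gamma_z(I)$ be the largest gauge-invariant ideal contained in $I$, and set $H := \{v \in \Lambda^0 : s_v \in I\}$. Because vertex projections are gauge-fixed, $H$ is also the vertex-projection set of $J$; standard arguments show $H$ is saturated hereditary. Pass to $C^*(\Lambda)/J$ and identify this quotient with (or map it onto) a $k$-graph $C^*$-algebra built from $\Lambda \setminus \Lambda H$. The first assertion shows this ambient $k$-graph is aperiodic, so characterisation~(4) of \cite{LS2010} forces every nonzero ideal to contain a vertex projection. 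By maximality of $H$, the image of $I$ in $C^*(\Lambda)/J$ contains no such vertex projection, so it must be zero; thus $I = J$ is gauge-invariant.

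The main obstacle lies in the last paragraph, specifically the identification of $C^*(\Lambda)/J$ with a $k$-graph $C^*$-algebra to which characterisation~(4) of \cite{LS2010} directly applies. For row-finite locally convex $\Lambda$ the quotient is cleanly $C^*(\Lambda \setminus \Lambda H)$, but in full finitely aligned generality one must invoke the $(H,B)$-parameterisation of gauge-invariant ideals from \cite{Sims2006a} and verify that aperiodicity of $\Lambda \setminus \Lambda H$ transfers to the relative Cuntz-Krieger algebra realising $C^*(\Lambda)/J$.
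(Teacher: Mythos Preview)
Your argument for the first assertion is correct and in fact more direct than the paper's. Both proofs show that $\Gamma := \Lambda \setminus \Lambda H$ is aperiodic by exhibiting a generalised cycle whenever it is not; the paper does this by passing through the boundary-path characterisations of aperiodicity (\cite[Proposition~3.6 and Lemma~4.3]{LS2010}) to obtain paths $\mu\alpha,\nu\alpha$ satisfying condition~(\ref{it:boundary paths}) of Lemma~\ref{lem:gen cycle characterisations}, whereas your truncation argument extracts a generalised cycle $(\mu,\nu)$ directly from the failure of the $\MCE$ form of aperiodicity. Your route is shorter and avoids the boundary-path machinery entirely.

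For the second assertion your strategy---pass to the largest gauge-invariant subideal $J$, realise $C^*(\Lambda)/J$ as an algebra built from $\Lambda\setminus\Lambda H$, and invoke the ``every nontrivial ideal contains a vertex projection'' characterisation of aperiodicity---is the natural one, and you correctly identify the genuine obstacle: in the finitely aligned setting $C^*(\Lambda)/J$ is a \emph{relative} Cuntz--Krieger algebra $C^*(\Gamma;\Ee)$, and characterisation~(4) of \cite{LS2010} is stated only for the full $C^*(\Gamma)$. The paper handles exactly this point, but not by citing an off-the-shelf result: it first upgrades pairwise aperiodicity to the finite-set version via \cite[Lemma~4.4]{LS2010}, then explicitly re-runs the Cuntz--Krieger uniqueness argument of \cite[Theorem~6.3]{Sims2006} (with specified modifications to Lemmas~6.4 and~6.7) for arbitrary relative algebras over $\Gamma$, and finally feeds this into \cite[Theorem~7.2]{Sims2006a}. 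So your sketch is on the right track, but the gap you flag is real and is precisely what the paper spends its effort on; completing your argument requires the same hands-on work with the proofs in \cite{Sims2006, Sims2006a} rather than a clean application of existing statements.
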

\begin{proof}
For the first statement of the Proposition, we prove the contrapositive. Suppose that there exist a
saturated hereditary $H \subset \Lambda^0$ and distinct paths $\eta,\zeta \in \Lambda \setminus
\Lambda H$ such that for every $\tau \in s(\eta)\Lambda \setminus \Lambda H$, we have
$\MCE(\eta\tau, \zeta\tau) \cap (\Lambda \setminus \Lambda H) \not= \emptyset$. Let $\Gamma :=
\Lambda \setminus \Lambda H$. The paths $\eta,\zeta \in \Gamma$ have the property that for every
$\tau \in s(\eta)\Gamma$, we have $\MCE_\Gamma(\eta\tau,\zeta\tau) \not= \emptyset$. That is,
$\Gamma$ is not aperiodic in the sense of \cite[Definition~3.1]{LS2010}.

By \cite[Proposition~3.6 and Definition~3.5]{LS2010}, there exist $v \in \Gamma^0$ and distinct
$m,n \in \NN^k$ such that $m \vee n \le d(x)$ and $\sigma^m(x) = \sigma^n(x)$ for all $x \in
v\partial\Gamma$. By \cite[Lemma~4.3]{LS2010}, there then exist $\mu,\nu,\alpha \in \Gamma$ such
that $d(\mu) = m$, $d(\nu) = n$, $r(\mu) = r(\nu)$, $s(\mu) = s(\nu) = r(\alpha)$, and $\mu\alpha x
= \nu\alpha x$ for all $x \in s(\alpha)\partial\Gamma$. In particular $\{\mu\alpha x : x \in
s(\mu\alpha)\partial\Lambda\} \subseteq \{\nu\alpha y : y \in s(\nu\alpha)\partial \Lambda\}$. So
\mbox{(\ref{it:boundary paths})${}\implies{}$(\ref{it:gen cyc})} of Lemma~\ref{lem:gen cycle
characterisations} implies that $(\mu\alpha, \nu\alpha)$ is a generalised cycle in $\Gamma$, and
then Theorem~\ref{thm:main necessary} implies that $C^*(\Gamma)$ is not AF.

Corollary~5.3 of \cite{Sims2006a} implies that $C^*(\Gamma)$ is isomorphic to the quotient of
$C^*(\Lambda)$ by the ideal generated by $\{s_v : v \in H\}$. Since quotients of AF algebras are
AF, it follows that $C^*(\Lambda)$ is also not AF.

To prove the second statement, suppose that $C^*(\Lambda)$ is indeed AF. The previous statement
combined with \cite[Lemma~4.4]{LS2010} implies that for each saturated hereditary $H \subseteq
\Lambda^0$, each $v \in \Lambda^0 \setminus H$ and each finite $F \subset \Lambda v$, there exists
$\tau \in v\Lambda \setminus \Lambda H$ such that $\MCE(\mu\tau, \nu\tau) = \emptyset$ for all
distinct $\mu,\nu \in F$. We may now run the proof of \cite[Theorem~6.3]{Sims2006}, leaving out
Lemma~6.4 and the first two paragraphs of the proof of Lemma~6.7 and using $\tau$ in place of the
path $x(0, N)$ in the remainder of the proof of Lemma~6.7, to see that the conclusion of
Theorem~6.3 holds for any relative Cuntz-Krieger algebra associated to $\Lambda \setminus \Lambda
H$; and then the argument of \cite[Theorem~7.2]{Sims2006a} implies that every ideal of
$C^*(\Lambda)$ is gauge-invariant as claimed.
\end{proof}

\section{Corners and skew-products}

We begin this section with a characterisation of approximate finite-dimensionality of
$C^*(\Lambda)$ in terms of the same property for corners of the form $s_v C^*(\Lambda) s_v$. We
then describe a recipe for constructing examples of $k$-graphs whose $C^*$-algebras are AF.

\begin{prop} \label{prp:vertex corners}
Let $(\Lambda,d)$ be a finitely aligned $k$-graph. Then $C^*(\Lambda)$ is AF if and only if the
corners $s_v C^*(\Lambda) s_v$, $v \in \Lambda^0$ are all AF.
\end{prop}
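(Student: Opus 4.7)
The plan is as follows. The forward direction follows immediately from the fact that corners of AF algebras are AF, already invoked in the proof of Proposition~\ref{prp:arbitrary partial unitary}.

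For the converse, write $A := C^*(\Lambda)$ and suppose each $s_v A s_v$ is AF. The projections $p_F := \sum_{v \in F} s_v \in M(A)$, indexed by finite subsets $F \subseteq \Lambda^0$ under inclusion, form an approximate identity for $A$, so $A = \overline{\bigcup_F p_F A p_F}$ is a directed union. Since a directed union of AF subalgebras is AF, it suffices to show that each $p_F A p_F$ is AF. I would proceed by induction on $|F|$: the base case $|F| = 1$ is the hypothesis, and the inductive step reduces to the following two-projection statement: if $p, q \in M(A)$ are orthogonal projections with $pAp$ and $qAq$ AF, then $(p+q) A (p+q)$ is AF.

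To prove the two-projection statement I plan to verify Elliott's local characterization of AF algebras. Given a finite set $X \subseteq (p+q) A (p+q)$ and $\epsilon > 0$, decompose each $x \in X$ into its four components in $pAp$, $pAq$, $qAp$, $qAq$; the diagonal parts are approximated directly by the AF structures of $pAp$ and $qAq$. For an off-diagonal element $x \in pAq$, approximate $x^* x \in qAq$ by an element $y = \sum_i \lambda_i^2 e_i$ in a finite-dimensional $*$-subalgebra $D_q \subseteq qAq$ with $\lambda_i > 0$ and pairwise orthogonal projections $e_i$. The elements $w_i := x e_i / \lambda_i$ then lie in $pAq$, satisfy $w_i^* w_j \approx \delta_{ij} e_i$ and $w_i w_j^* = \delta_{ij} w_i w_i^*$ exactly, and can be polished into true partial isometries $v_i \in (p+q) A (p+q)$ with $v_i^* v_i = e_i$ and $v_i v_i^*$ a projection in an enlarged finite-dimensional $D_p \subseteq pAp$. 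The $*$-subalgebra generated by $D_p \cup D_q \cup \{v_i\}$ is then finite-dimensional and approximates $x$ via $x \approx \sum_i \lambda_i v_i$. Iterating over the elements of $X$ yields the required finite-dimensional $*$-subalgebra.

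The main obstacle will be the polishing step, since polar decomposition of $x$ a priori produces partial isometries only in $A^{**}$. The idea is to exploit the AF hypothesis on $qAq$ and $pAp$: because $w_i^* w_i \in qAq$ is close to the projection $e_i$, functional calculus on $w_i^* w_i$ yields a correction factor whose product with $w_i$ is a genuine partial isometry in $(p+q)A(p+q)$; and the relation $w_i w_j^* = 0$ for $i \neq j$ forces the supports in $pAp$ of the $w_i w_i^*$ to be mutually orthogonal, so they can be simultaneously approximated by mutually orthogonal projections within a single finite-dimensional subalgebra of $pAp$.
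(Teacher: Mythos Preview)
Your overall architecture matches the paper's: reduce to finite $F$ by a directed-union argument, then induct on $|F|$. The divergence is in the inductive step. The paper works not with the corners $p_F A p_F$ but with the \emph{ideals} $I_F := C^*(\Lambda) P_F C^*(\Lambda)$. For the base case, $I_{\{v\}}$ is Morita equivalent to $s_v C^*(\Lambda) s_v$, hence AF. For the step, writing $F = G \sqcup \{v\}$ gives $I_F = I_G + I_{\{v\}}$ and thus a short exact sequence $I_{\{v\}} \to I_F \to I_G / (I_G \cap I_{\{v\}})$; the quotient is AF as a quotient of the AF algebra $I_G$, and the class of AF algebras is closed under extensions. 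So the two-projection statement you isolated is indeed true, but for structural reasons --- Morita equivalence plus closure under quotients and extensions --- rather than by an explicit local-approximation construction.

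Your hands-on route via Elliott's local criterion is plausible, but the sketch has real gaps. The phrase ``iterating over the elements of $X$'' hides the main difficulty: the construction must be carried out \emph{simultaneously} for all off-diagonal pieces $x_1, \dots, x_m \in pAq$, and the partial isometries produced for distinct $x_k$ will interact --- products such as $v_{k,i}^* v_{l,j}$ and $v_{k,i} v_{l,j}^*$ must all land in your fixed finite-dimensional pieces, which does not follow from handling each $x_k$ separately. Even for a single $x$, the assertion that the $*$-algebra generated by $D_p \cup D_q \cup \{v_i\}$ is finite-dimensional is not automatic: you need the $e_i$ to be minimal in $D_q$ and the range projections $v_i v_i^*$ to sit inside $D_p$ compatibly with its matrix-unit decomposition, else words like $a\, v_i\, b\, v_j^*\, c$ (with $a,c \in D_p$, $b \in D_q$) can escape. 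These issues are surmountable with careful bookkeeping, but the ideal-and-extension argument bypasses them entirely, at the modest cost of invoking the extension theorem for AF algebras.
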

\begin{proof}
It is standard that corners of AF algebras are AF (see, for example,
\cite[Exercise~III.2]{Davidson1996}), proving the ``only if'' implication.

For the ``if'' direction, suppose that each $s_v C^*(\Lambda) s_v$ is AF. For each finite $F
\subset \Lambda^0$, let $P_F := \sum_{v \in F} s_v$. We claim that each ideal $I_F := C^*(\Lambda)
P_F C^*(\Lambda)$ is AF. We proceed by induction on $|F|$. If $|F| = 1$, say $F = \{v\}$, then $I_F
\Me s_v C^*(\Lambda) s_v$ is AF by hypothesis. Now suppose that $I_F$ is AF whenever $|F| \le n$
and fix $F \subset \Lambda^0$ with $|F| = n+1$. Fix $v \in F$, and let $G = F \setminus \{v\}$.
Then $I_G$ and $I_{\{v\}}$ are both AF by the inductive hypothesis, and hence $I_G/(I_{\{v\}} \cap
I_G)$ is also AF because quotients of AF algebras are AF. Since $I_F = I_G + I_{\{v\}}$, there is
an exact sequence
\[
I_{\{v\}} \to I_F \to I_G/(I_{\{v\}} \cap I_G).
\]
Since extensions of AF algebras by AF algebras are also AF (see, for example
\cite[Theorem~III.6.3]{Davidson1996}), it follows that $I_F$ is AF as claimed.

Clearly $G \subseteq F$ implies $I_G \subseteq I_F$. Thus $C^*(\Lambda) = \overline{\bigcup_{F
\subset \Lambda^0\text{ finite}} I_F}$ is AF because the class of AF algebras is closed under
taking countable direct limits (this follows from an $\varepsilon/2$ argument using
\cite[Theorem~2.2]{Bratteli1972a}).
\end{proof}

We now describe a class of examples of $k$-graphs whose $C^*$-algebras are AF. The primary
motivation is the example $\LAF$ discussed in Section~\ref{sec:examples}.

Recall from \cite[Definition~1.9]{KP2000} that if $f : \NN^k \to \NN^l$ is a surjective
homomorphism, and $\Lambda$ is an $l$-graph, then there is a pullback $k$-graph $f^*(\Lambda)$
equal as a set to $\{(\lambda, n) \in \Lambda \times \NN^k : d(\lambda) = f(n)\}$ with pointwise
operations and degree map $d(\lambda,n) := n$. Also recall that if $c : \Lambda \to \ZZ^k$ is a
functor from a $k$-graph to $\ZZ^k$, then we can form the skew-product $k$-graph $\Lambda \times_c
\ZZ^k$, which is equal as a set to $\Lambda \times \ZZ^k$ with structure maps $r(\lambda,m) =
(r(\lambda), m)$, $s(\lambda,m) = (s(\lambda), m + c(\lambda))$, and $(\lambda, m)(\mu, m +
c(\lambda)) = (\lambda\mu, m)$.

\begin{example}\label{eg:skew-pullback}
Let $E$ be a row-finite  $1$-graph, and denote the degree functor on $E$ by $|\cdot| : E \to \NN$.
Let $c_0$ be a function from $E^1$ to $\{0, e_1, \dots, e_{k-1}\} \subseteq \ZZ^{k}$, and for
$\lambda = \lambda_1\cdots\lambda_n \in E^n$, let $c_0(\lambda) := \sum^n_{i=1} c_0(\lambda_i)$.
Define $c_0(v) = 0$ for $v \in E^0$. Define $f : \NN^k \to \NN$ by $f(n) = \sum^k_{i=1} n_i$, and a
functor $c : f^*(E) \to \ZZ^k$ by
\[
c(\lambda,n)_j := \begin{cases}
    c_0(\lambda)_j - \sum_{i \not= j} n_i &\text{ if $j < k$,} \\
    |\lambda| &\text{ if $j = k$.} \\
\end{cases}
\]
Let $\Lambda$ be the skew-product $k$-graph $\Lambda = f^*(E)\times_c \ZZ^k$. Identifying $(E
\times \NN^k) \times \NN^k$ with $E \times \NN^k \times \NN^k$, we have
\[
    f^* (E) \times_c \ZZ^k = \{(\alpha, m, q) \in E \times \NN^k \times \ZZ^k : |\alpha| = f(m)\}.
\]
Observe that $p(\lambda, n, a) := \lambda$ defines a functor from $\Lambda$ to $E$. In particular,
each $v \in \Lambda^0$ has the form $v = (p(v), 0, q)$ for some $q \in \ZZ^k$, and then $\mu =
(p(\mu), d(\mu), q)$ for all $\mu \in v\Lambda$.

We will show that $C^*(\Lambda)$ is AF, with the corners of $C^*(\Lambda)$ determined by vertex
projections isomorphic to corresponding corners of the AF core of $C^*(E)$.
\end{example}

\begin{lemma}\label{lem:equal pis}
Consider the situation of Example~\ref{eg:skew-pullback}. Fix a vertex $v = (p(v), 0, q) \in
\Lambda^0$. Fix $\mu,\nu \in v\Lambda$, let $m := d(\mu)$ and $n := d(\nu)$, and express $s(\mu)$
as $(w, 0, q) \in E^0 \times \{0\} \times \ZZ^k = \Lambda^0$. Then
\[
s^*_\mu s_\nu
    = \begin{cases}
        \sum_{\tau \in p(s(\mu))E^{|n|}} s_{(\tau, n, q + c(\mu,m))} s^*_{(\eta\tau, m, q + c(\nu,n))}
            &\text{ if $p(\mu) = p(\nu)\eta$,} \\
        \sum_{\tau \in p(s(\nu))E^{|m|}} s_{(\zeta\tau, n, q + c(\mu,m))} s^*_{(\tau, m, q + c(\nu,n))}
            &\text{ if $p(\nu) = p(\mu)\zeta$,} \\
        0 &\text{ otherwise.}
    \end{cases}
\]
\end{lemma}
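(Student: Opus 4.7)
The plan is to compute $s_\mu^* s_\nu$ by first applying (CK3) to produce an expansion indexed by $\MCE(\mu,\nu)$, and then inserting (CK4) at the common source of the two factors to further expand each term, rebalancing the degrees so that the first factor has degree $n$ and the second has degree $m$. The third case of the lemma will follow at once once we see that in the ``otherwise'' setting $\MCE(\mu,\nu)$ is empty.

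The analysis of $\MCE(\mu,\nu)$ is the first substantive step. Writing $\mu = (p(\mu), m, q)$ and $\nu = (p(\nu), n, q)$, any common extension of $\mu$ and $\nu$ must have the form $\lambda = (\alpha, m \vee n, q)$ with $\alpha(0,|m|) = p(\mu)$ and $\alpha(0,|n|) = p(\nu)$. Because $E$ is a $1$-graph and $p(\mu), p(\nu)$ share range $p(v)$, these two prefix conditions are simultaneously satisfiable precisely when one of $p(\mu), p(\nu)$ is an initial segment of the other in $E$. This yields exactly the trichotomy in the statement, and in the ``otherwise'' case $\MCE(\mu,\nu) = \emptyset$, so (CK3) gives $s_\mu^* s_\nu = 0$ immediately.

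For the other two cases I would explain case~2 (case~1 is entirely symmetric). Assuming $p(\nu) = p(\mu)\zeta$, the MCEs are parametrized by $\sigma \in p(s(\nu))E^{|m \vee n| - |n|}$ via $\lambda_\sigma := (p(\nu)\sigma, m \vee n, q)$, and the factorizations $\mu\alpha_\sigma = \lambda_\sigma = \nu\beta_\sigma$ yield $\alpha_\sigma = (\zeta\sigma, (m\vee n) - m, q + c(\mu,m))$ and $\beta_\sigma = (\sigma, (m\vee n) - n, q + c(\nu,n))$. Thus (CK3) gives a sum $\sum_\sigma s_{\alpha_\sigma} s^*_{\beta_\sigma}$, but with the ``wrong'' factor degrees $(m \vee n) - m$ and $(m \vee n) - n$. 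I would then insert the identity $s_{s(\alpha_\sigma)} = \sum_{\rho \in s(\alpha_\sigma)\Lambda^{m \wedge n}} s_\rho s^*_\rho$ (an instance of (CK4) valid in the row-finite, no-sources setting inherited by $\Lambda$ from $E$) between $s_{\alpha_\sigma}$ and $s^*_{\beta_\sigma}$, using $s(\alpha_\sigma) = s(\beta_\sigma)$, to produce terms $s_{\alpha_\sigma\rho} s^*_{\beta_\sigma\rho}$ whose factor degrees are now $(m \vee n) - m + m \wedge n = n$ and $(m \vee n) - n + m \wedge n = m$. Writing $\rho = (\delta, m \wedge n, \cdot)$ and $\tau := \sigma\delta$, the identity $|m\vee n| - |n| + |m\wedge n| = |m|$ (from $|m \vee n| + |m \wedge n| = |m| + |n|$) places $\tau$ in $p(s(\nu))E^{|m|}$, and since $E$ is a $1$-graph the assignment $(\sigma,\delta) \mapsto \sigma\delta$ is a bijection between the double index set and $p(s(\nu))E^{|m|}$. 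Reindexing gives exactly the sum in case~2.

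The main obstacle is the bookkeeping in this final reindexing, together with verifying that the $\ZZ^k$-components of the vertices produced through the $\mu$- and $\nu$-factorizations indeed land at $q + c(\mu,m)$ and $q + c(\nu,n)$ respectively. This follows from functoriality of $c$ and the consistency identity $c(\mu,m) + c(\alpha_\sigma) = c(\lambda_\sigma) = c(\nu,n) + c(\beta_\sigma)$ forced by $s(\alpha_\sigma) = s(\beta_\sigma)$, but each of these coordinate computations needs to be unpacked explicitly using the explicit formula defining $c$ in Example~\ref{eg:skew-pullback}.
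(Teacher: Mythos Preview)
Your argument is correct, but it takes a different route from the paper's. The paper inserts a single Cuntz--Krieger expansion at the common \emph{range} $v$ in degree $m+n$: writing $N = |m|+|n|$, it computes
\[
s^*_\mu s_\nu = \sum_{\xi \in p(v)E^{N}} s^*_\mu\, s_{(\xi,m+n,q)} s^*_{(\xi,m+n,q)}\, s_\nu,
\]
then factorises each $(\xi,m+n,q)$ once as (degree $m$)(degree $n$) and once as (degree $n$)(degree $m$), and uses the orthogonality of same-degree generators to kill all terms except those with $\xi(0,|m|) = p(\mu)$ and $\xi(0,|n|) = p(\nu)$. The trichotomy and the final formula drop out simultaneously from the surviving $\xi$.

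Your approach instead applies (CK3) directly to obtain a sum over $\MCE(\mu,\nu)$ at degree $m\vee n$, and then inserts a (CK4) expansion at the common \emph{source} in degree $m\wedge n$ to rebalance the factor degrees to $n$ and $m$. This is arguably cleaner conceptually, since the trichotomy is visible immediately from the description of $\MCE(\mu,\nu)$, and the $\ZZ^k$-coordinates $q+c(\mu,m)$ and $q+c(\nu,n)$ are forced by the skew-product structure and functoriality of $c$ without any explicit unpacking of the formula for $c$ (so your final paragraph is more cautious than necessary). The paper's single-expansion route is slightly more economical in that it avoids the second reindexing step, but both arguments are short and use the same ingredients in a different order.
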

\begin{proof}
Let $N := |\mu| + |\nu|$. We have
\begin{align*}
s^*_\mu s_\nu
    &= \sum_{\lambda \in v\Lambda^{m+n}} s^*_\mu s_\lambda s^*_\lambda s_\nu \nonumber\\
    &= \sum_{\xi \in wE^N} s^*_{(p(\mu), m, q)} s_{(\xi,m+n,q)} s^*_{(\xi,m+n,q)} s_{(p(\nu), n, q)}.
\end{align*}
Factorising each $\xi \in wE^N$ as $\xi = \xi_m\xi'_m = \xi_n\xi'_n$ where $|\xi_m| = |m|$ and
$|\xi_n| = |n|$ gives
\begin{equation}
s^*_\mu s_\nu
    = \sum_{\xi \in wE^N} (s^*_{(p(\mu), m, q)} s_{(\xi_m,m,q)}) s_{(\xi'_m, n, q + c(\xi_m,m))}
            s^*_{(\xi'_n, m, q + c(\xi_n,n))} (s^*_{(\xi_n,n,q)} s_{(p(\nu), n, q)}).\label{eq:pull star across}
\end{equation}
The Cuntz-Krieger relations ensure that $s^*_{(p(\mu), m, q)} s_{(\xi_m,m,q)} = 0$ unless $\xi_m =
p(\mu)$, and likewise $s^*_{(\xi_n,n,q)} s_{(p(\nu), n, q)} = 0$ unless $\xi_n = p(\nu)$. So
$s^*_\mu s_\nu = 0$ unless there exists $\xi \in E^N$ such that $\xi = p(\mu)\xi'_m = p(\nu)\xi'_n$
which occurs if and only if either: (1) $p(\mu) = p(\nu)\eta$, and $\xi'_m = \tau$ and $\xi'_n =
\eta\tau$ for some $\tau \in E^{|n|}$; or (2) $p(\nu) = p(\mu)\zeta$, and $\xi'_n = \tau$ and
$\xi'_m = \zeta\tau$ for some $\tau \in E^{|m|}$. Using this to simplify~\eqref{eq:pull star
across}, we obtain the desired formula for $s^*_\mu s_\nu$.
\end{proof}

\begin{lemma}\label{lem:a,b exist}
Consider the situation of Example~\ref{eg:skew-pullback}. Fix $\alpha,\beta \in E^N$. Define $a,b
\in \NN^k$ by
\[
    a_j = \begin{cases}
                c_0(\beta)_j &\text{ $j < k$} \\
                N - |c_0(\beta)| &\text{ $j = k$}
            \end{cases} \qquad\text{ and }\qquad
    b_j = \begin{cases}
                c_0(\alpha)_j + c_j &\text{ $j < k$,} \\
                N - |c_0(\alpha)| &\text{ $j = k$.}
            \end{cases}
\]
Then $(\alpha,a), (\beta,b) \in f^* (E)$. If $s(\alpha) = s(\beta)$, then $s(\alpha,a,q) =
s(\beta,b,q)$ for all $q \in \NN^k$, and if $s(\alpha) \not= s(\beta)$, then $s(\alpha,a,q) \not=
s(\beta,b,q)$ for all $q \in \NN^k$.
\end{lemma}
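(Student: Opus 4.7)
The plan is to verify two assertions in turn. First, I would show that $(\alpha,a)$ and $(\beta,b)$ actually lie in $f^*(E)$, which by definition reduces to the identities $|\alpha| = f(a)$ and $|\beta| = f(b)$. Second, I would compute the sources of $(\alpha,a,q)$ and $(\beta,b,q)$ in the skew-product $\Lambda = f^*(E) \times_c \ZZ^k$ and show that the $\NN^k$- and $\ZZ^k$-coordinates agree regardless of $q$, so that equality or inequality of sources hinges exactly on whether $s(\alpha) = s(\beta)$ in $E$.

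For the membership step, I would add up the coordinates of $a$ directly. The first $k-1$ components contribute $|c_0(\beta)|$, and the $k$-th contributes $N - |c_0(\beta)|$, giving $f(a) = N = |\alpha|$. The verification for $(\beta,b)$ is the mirror calculation, swapping the roles of $\alpha$ and $\beta$, and again yields $f(b) = N = |\beta|$.

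For the source calculation, recall that in the skew product one has $s((\lambda,m),q) = ((s(\lambda),0),\, q + c(\lambda,m))$. Thus the $E^0$-coordinates of the two sources are $s(\alpha)$ and $s(\beta)$ respectively, and the $\NN^k$-coordinates are both $0$; it remains only to check that $c(\alpha,a) = c(\beta,b)$. The top coordinate is immediate since $c(\alpha,a)_k = |\alpha| = N = |\beta| = c(\beta,b)_k$. For $j < k$, I would exploit the already-established identity $\sum_i a_i = f(a) = N$ to telescope:
\[
c(\alpha,a)_j = c_0(\alpha)_j - \sum_{i \ne j} a_i = c_0(\alpha)_j + a_j - N = c_0(\alpha)_j + c_0(\beta)_j - N,
\]
and argue symmetrically that $c(\beta,b)_j$ reduces to the same expression $c_0(\alpha)_j + c_0(\beta)_j - N$. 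Hence $c(\alpha,a) = c(\beta,b)$, and so $s(\alpha,a,q)$ and $s(\beta,b,q)$ coincide in their last two coordinates for every $q$; therefore the sources are equal if and only if $s(\alpha) = s(\beta)$.

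This is essentially bookkeeping rather than a conceptual argument, and I do not anticipate a serious obstacle. The only mild subtlety is that the cocycle $c$ is ``dual'' in its first $k-1$ components, subtracting the \emph{other} coordinates of the degree rather than the diagonal one; the trick that makes the proof clean is to rewrite $-\sum_{i\ne j}a_i$ as $a_j - N$ using the pullback constraint, after which the symmetric roles of $\alpha$ and $\beta$ in the definitions of $a$ and $b$ give the desired equality of cocycles immediately.
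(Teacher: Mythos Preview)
Your proposal is correct and follows essentially the same approach as the paper's proof: both verify $f(a)=f(b)=N$ to get membership in $f^*(E)$, then show $c(\alpha,a)=c(\beta,b)$ coordinatewise, using the same trick of rewriting $-\sum_{i\ne j}a_i$ as $a_j-N$ to expose the symmetry in $\alpha$ and $\beta$. The only cosmetic difference is that the paper expresses the common value as $a_j+b_j-N$ whereas you unpack it as $c_0(\alpha)_j+c_0(\beta)_j-N$, which is the same thing.
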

\begin{proof}
Clearly $f(a) = f(b) = N = |\alpha| = |\beta|$, so $(\alpha,a), (\beta,b) \in f^* (E)$. For $j
\not= k$, we have
\[
c(\alpha,a)_j = c_0(\alpha)_j - \sum_{i \not= j} a_i = b_j - (N - a_j) = N + a_j + b_j,
\]
and similarly $c(\beta,b)_j = N + b_j + a_j$. Since $c(\alpha,a)_k = N = c(\beta,b)_k$, we have
$c(\alpha,a) = c(\beta,b)$ and the result follows.
\end{proof}

\begin{cor}\label{cor:skew-pullback-structure}
Consider the situation of Example~\ref{eg:skew-pullback}. For $v \in \Lambda^0$ and $N \in \NN$,
let $B_N(v) := \lsp\{s_\mu s^*_\nu : \mu,\nu \in v\Lambda, |\mu| = |\nu| = N\}$. Then for each $N
\in \NN$, the space $B_N(v)$ is a finite dimensional $C^*$-algebra with nonzero matrix units
\[
    \{w_{\eta, \zeta} : \eta,\zeta \in p(v)E^N, s(\eta) = s(\zeta)\}.
\]
Moreover $s_\mu s^*_\nu = \sum_{\xi \in s(\mu)\Lambda^{e_1}} s_{\mu \xi} s^*_{\nu\xi}$ determines
an inclusion $B_N(v) \subseteq B_{N+1}(v)$, and $s_v C^*(\Lambda) s_v = \overline{\bigcup_{N =
1}^\infty B_N(v)}$. For each $v \in \Lambda^0$, we have $s_v C^*(\Lambda) s_v \cong t_{p(v)}
C^*(E)^\gamma t_{p(v)}$. If $\Lambda$ is cofinal, then $C^*(\Lambda)$ is strongly Morita equivalent
to $s_w C^*(E)^\gamma s_w$ for any $w \in E^0$.
\end{cor}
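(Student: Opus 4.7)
The plan is to establish the four claims of the Corollary in order: the matrix unit structure of each $B_N(v)$, the inclusion $B_N(v) \subseteq B_{N+1}(v)$ and density in $s_v C^*(\Lambda) s_v$, the $*$-isomorphism with the corresponding corner of the AF core of $C^*(E)$, and finally the Morita equivalence under cofinality. The central tool throughout is the canonical lifting procedure provided by Lemma~\ref{lem:a,b exist}.

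For the matrix unit structure of $B_N(v)$, I would define for each $\alpha \in p(v)E^N$ the canonical lift $\widetilde\alpha := (\alpha, a_\alpha, q) \in v\Lambda$, where $a_\alpha$ is the degree given by Lemma~\ref{lem:a,b exist} and $q$ comes from writing $v = (p(v), 0, q)$. That lemma's key output is that $s(\widetilde\eta) = s(\widetilde\zeta)$ in $\Lambda^0$ if and only if $s(\eta) = s(\zeta)$ in $E^0$. Setting $w_{\eta,\zeta} := s_{\widetilde\eta} s^*_{\widetilde\zeta}$ then gives a nonzero element precisely when $s(\eta) = s(\zeta)$, and the Cuntz-Krieger identity $s^*_{\widetilde\eta} s_{\widetilde\eta} = s_{s(\widetilde\eta)}$ yields the matrix unit relations $w_{\eta,\zeta} w_{\zeta',\rho} = \delta_{\zeta,\zeta'} w_{\eta,\rho}$. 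The spanning claim reduces to showing that $s_\mu s^*_\mu = s_{\widetilde{p(\mu)}} s^*_{\widetilde{p(\mu)}}$ for any $\mu \in v\Lambda$ with $|\mu| = N$. To see this, apply~(CK4$'$) to $s(\mu)$ at a degree of length $N$ and likewise to $s(\widetilde{p(\mu)})$: the resulting sums have term-by-term matching extensions, because in the skew-product $\Lambda = f^*(E)\times_c\ZZ^k$ a path is uniquely determined by its $E$-projection, its degree, and its base $\ZZ^k$-coordinate, and these data coincide after extension. The off-diagonal case $s_\mu s^*_\nu$ is handled analogously using Lemma~\ref{lem:equal pis}.

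For the inclusion $B_N(v) \subseteq B_{N+1}(v)$, applying~(CK4$'$) at the common source $s(\widetilde\eta)$ with $n = e_1$ gives $w_{\eta,\zeta} = \sum_{\xi \in s(\widetilde\eta)\Lambda^{e_1}} s_{\widetilde\eta\xi} s^*_{\widetilde\zeta\xi}$; each summand lies in $B_{N+1}(v)$ and, by the argument of the previous paragraph, equals a canonical matrix unit $w_{\eta p(\xi),\zeta p(\xi)}$ of $B_{N+1}(v)$. Density $s_v C^*(\Lambda) s_v = \overline{\bigcup_N B_N(v)}$ then follows because $s_v C^*(\Lambda) s_v$ is the closed linear span of $s_\mu s^*_\nu$ with $r(\mu) = r(\nu) = v$, and any such product can be rewritten with $|\mu| = |\nu|$ by extending the shorter path via~(CK4$'$). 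For the $*$-isomorphism $s_v C^*(\Lambda) s_v \cong t_{p(v)} C^*(E)^\gamma t_{p(v)}$, I would define compatible $*$-isomorphisms $\phi_N$ from $B_N(v)$ to the $N$-th floor of the matrix-unit description of the corner of the AF core by $w_{\eta,\zeta} \mapsto t_\eta t^*_\zeta$; the expansion of $w_{\eta,\zeta}$ using~(CK4$'$) in $\Lambda$ corresponds precisely to the analogous expansion of $t_\eta t^*_\zeta$ using the Cuntz-Krieger relation at $s(\eta)$ in $C^*(E)$, so the $\phi_N$ are compatible with the inclusions on both sides and assemble into a $*$-isomorphism.

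For the final Morita equivalence, cofinality of $\Lambda$ implies that the only saturated hereditary subsets of $\Lambda^0$ are $\emptyset$ and $\Lambda^0$, so each vertex projection $s_v$ is full in $C^*(\Lambda)$; hence $C^*(\Lambda) \Me s_v C^*(\Lambda) s_v$. Because $p : \Lambda^0 \to E^0$ is surjective (every $w \in E^0$ equals $p(v)$ for $v := (w,0,0) \in \Lambda^0$), the preceding isomorphism then yields $C^*(\Lambda) \Me t_w C^*(E)^\gamma t_w$ for any $w \in E^0$. The main obstacle is in the first step: verifying that despite the many possible degrees $m \in \NN^k$ with $|m| = N$ giving distinct lifts $(\alpha, m, q)$ of a single $\alpha \in p(v) E^N$, the resulting projections $s_\mu s^*_\mu$ all collapse to the canonical $w_{\alpha,\alpha}$. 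This relies critically on the explicit form of the cocycle $c$ in Example~\ref{eg:skew-pullback} and the choice of $a_\alpha$ in Lemma~\ref{lem:a,b exist}, which together ensure that the Cuntz-Krieger extensions of different lifts match term by term at a sufficiently deep level.
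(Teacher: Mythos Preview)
There is a genuine gap in your matrix-unit argument. You assert that Lemma~\ref{lem:a,b exist} produces, for each single path $\alpha\in p(v)E^N$, a canonical lift $\widetilde\alpha=(\alpha,a_\alpha,q)$ with the property that $s(\widetilde\eta)=s(\widetilde\zeta)$ whenever $s(\eta)=s(\zeta)$. It does not: the lemma takes a \emph{pair} $(\alpha,\beta)$ and returns degrees $a,b$ where $a$ depends on $c_0(\beta)$ and $b$ on $c_0(\alpha)$. No path-by-path lifting formula can have the compatible-source property you need, because $s(\alpha,m,q)=(s(\alpha),0,q+c(\alpha,m))$ and $c(\alpha,m)_j=c_0(\alpha)_j-\sum_{i\ne j}m_i$; forcing this to depend only on $s(\alpha)$ would require $(a_\alpha)_j-c_0(\alpha)_j$ to be constant for all $j<k$, which is incompatible with $|a_\alpha|=N$ once $|c_0(\alpha)|$ varies. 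Consequently the two copies of $\zeta$ in the product $w_{\eta,\zeta}w_{\zeta,\rho}$ are lifted to \emph{different} degrees in $\NN^k$, and the relation $s^*_{\widetilde\zeta}s_{\widetilde\zeta}=s_{s(\widetilde\zeta)}$ from~(\ref{it:CK3'}) is not available.

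This is exactly the substance of the paper's proof that you have skipped. The paper first establishes the ``collapse'' claim in full generality (that $s_\mu s^*_\nu$ depends only on $p(\mu),p(\nu)$, not on the chosen degrees, via the expansion at level $2N$ from Lemma~\ref{lem:equal pis} and the identity $m-a=n-b$); it then computes $w_{\eta,\zeta}w_{\alpha,\beta}$ directly using Lemma~\ref{lem:equal pis}, and the key step is the verification that $c(\eta,a)=c(\beta,n)$ for the particular degrees $a,b,m,n$ chosen, so that the middle sum collapses via~(\ref{it:CK4''}). You recognise the collapse issue as the ``main obstacle'' but only for the diagonal projections; the off-diagonal collapse and, separately, the product computation are where the cocycle arithmetic actually bites. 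A smaller point: for density you propose to extend the shorter path so that $|\mu|=|\nu|$, but in fact $s(\mu)=s(\nu)$ already forces $|p(\mu)|=|p(\nu)|$ because $c(\,\cdot\,,\cdot\,)_k=|\,\cdot\,|$; the paper uses this directly.
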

\begin{proof}
We first claim that if $\mu,\nu,\alpha,\beta \in v\Lambda$ with $|\mu| = |\nu| = |\alpha| = |\beta|
= N$, $s(\mu) = s(\nu)$, $s(\alpha) = s(\beta)$, $p(\mu) = p(\alpha)$ and $p(\nu) = p(\beta)$, then
$s_\mu s^*_\nu = s_\alpha s^*_\beta$.

To see this, let $m:= d(\mu), n := d(\nu), a := d(\alpha)$, and $b := d(\beta)$, and use
Lemma~\ref{lem:equal pis} to calculate
\begin{align}
s_\mu s^*_\nu
    &= s_\mu s^*_\nu \sum_{\sigma \in p(v)E^{2N}} s_{(\sigma, n + b, q)} s^*_{(\sigma, n + b, q)} \nonumber \\
    &= \sum_{\tau \in s(p(\nu))E^{N}} s_{(p(\mu)\tau, m + b, q)} s^*_{(p(\nu)\tau, n+b, q)}.\label{eq:halfwaymark}
\end{align}
Likewise,
\begin{equation}\label{eq:halfwaymark2}
s_\alpha s^*_\beta = \sum_{\tau \in s(p(\beta))E^{N}} s_{(p(\alpha)\tau, a + n, q)} s^*_{(p(\beta)\tau, b + n, q)}.
\end{equation}
We have $p(\mu) = p(\alpha)$ and $p(\nu) = p(\beta)$ by assumption. So it suffices to show that
$a+n = m+b$. That $s(\mu) = s(\nu)$ implies in particular that $q + c(\mu) = q + c(\nu)$ and hence
that for $j \not= k$,
\[
c_0(p(\mu))_j - \sum_{i \not= j} m_i = c_0(p(\nu)) - \sum_{i \not= j} n_i.
\]
Similarly, each
\[
c_0(p(\alpha))_j - \sum_{i \not= j} a_i = c_0(p(\beta)) - \sum_{i \not= j} b_i.
\]
Since $p(\mu) = p(\alpha)$ and $p(\nu) = p(\beta)$, we may subtract the two equations above to
obtain
\[
\sum_{i \not= j} (m_i - a_i) = \sum_{i \not= j} (n_i - b_i) \quad\text{ for all $j \not= k$}.
\]
Moreover, $\sum^k_{i=1} a_i = N = \sum^k_{i=1} m_i$, and similarly for the $n_i$ and $b_i$, so we
obtain $m_j - a_j = n_j - b_j$ for all $j < k$; and then $m_k - a_k = n_k - b_k$ also because $|m|
= |n| = |a| = |b| = N$. So $m - a = n - b$, and rearranging we obtain $a+n = m+b$, proving the
claim.

For $\eta,\zeta \in p(v)E^N$, Lemma~\ref{lem:a,b exist} yields $a,b \in \NN^k$ with $|a| = |b| = N$
and $c(\eta,a) = c(\zeta,b)$. We then have $s(\eta,a,q) = s(\zeta,b,q)$ if and only if $s(\eta) =
s(\zeta)$. For each pair $\eta, \zeta \in p(v)E^n$ such that $s(\eta) = s(\zeta)$, define
$w_{\eta,\zeta} := s_{(\eta,a,q)} s^*_{(\zeta,b,q)}$. The above claim shows that the
$w_{\eta,\zeta}$ depend only on $\eta$ and $\zeta$ and not on our choice of $a$ and $b$. The claim
also implies that $s_\mu s^*_\nu = w_{(p(\mu), p(\nu))}$ whenever $\mu,\nu \in v\Lambda^N$ with
$s(\mu) = s(\nu)$ (this forces $s(p(\mu)) = s(p(\nu))$. Hence
\[
    B_N = \lsp\{w_{\eta, \zeta} : \eta,\zeta \in p(v)E^N, s(\eta) = s(\zeta)\}.
\]
The $w_{\eta,\zeta}$ are nonzero because $s_\mu s^*_\nu \not= 0$ in $C^*(\Lambda)$ whenever $s(\mu)
= s(\nu)$ \cite[Remarks~1.6(iv)]{KP2000}. It remains to check that they are matrix units. We have
\[
w^*_{\eta,\zeta} = (s_{(\eta, a ,q)} s^*_{(\zeta, b, q)})^* = s_{(\zeta, b, q)} s^*_{(\eta,a,q)} = w_{\zeta,\eta}
\]
for any choice of $a,b$ for which this makes sense. Lemma~\ref{lem:equal pis} implies that
$w_{\eta,\zeta} w_{\alpha,\beta} = 0$ if $\alpha \not= \zeta$.

Suppose that $\alpha = \zeta$. Let $a,b,m,n \in \NN^k$ be the unique elements such that $|a| = |b|
= |m| = |n| = N$ and $a_j = c(\beta)_j$, $b_j = c(\alpha)_j$, $m_j = c_0(\zeta)_j$ and $n_j =
c(\eta_j)$ for $j < k$. So by Lemma~\ref{lem:a,b exist} and Lemma~\ref{lem:equal pis}, we have
$w_{\alpha,\beta} = s_{(\alpha,a,q)}s^*_{(\beta,b,q)}$ and $w_{\eta,\zeta} =
s_{(\eta,m,q)}s^*_{(\zeta,n,q)}$. Since $\alpha = \zeta$, Lemma~\ref{lem:equal pis} and the
composition formula in $\Lambda$ gives
\begin{align}
w_{\eta,\zeta} w^*_{\alpha,\beta}
    &= \sum_{\tau \in s(\zeta) E^N} s_{(\eta\tau, m+a,q)} s^*_{(\beta\tau, n+b,q)} \nonumber\\
    &= \sum_{\tau \in s(\zeta) E^N} s_{(\eta,a,q)} s_{(\tau,m, q + c(\eta,a))}
        s^*_{(\tau, b, q + c(\beta, n))} s^*_{(\beta, n,q)} \nonumber\\
    &= s_{(\eta,a,q)} \Big(\sum_{\tau \in s(\zeta) E^N} s_{(\tau, m, q + c(\eta,a))}
        s^*_{(\tau, b, q + c(\beta, n))}\Big) s^*_{(\beta, n,q)}
        \label{eq:product of mu's}
\end{align}
We claim that $c(\eta,a) = c(\beta,n)$. We have $c(\eta,a)_k = N = c(\beta,n)_k$. Fix $j < k$. Then
\begin{align*}
c(\eta,a)_j
    &= c_0(\eta)_j - N + a_j \\
    &= c_0(\eta)_j - N + m_j + (a_j - m_j) \\
    &= c_0(\zeta)_j - N + n_j + (a_j - b_j) \quad\text{by definition of $m,n$}.
\end{align*}
The symmetric calculation gives $c(\beta,n)_j = c_0(\alpha)_j - N + a_j + (n_j - m_j)$.  Since
$\alpha = \zeta$, we have $b = m$ also, so $c(\eta,a)_j = c(\beta,n)_j$ as claimed.

We now have $s(\eta,a,q) = s(\beta,n,q)$, so that $w_{\eta,\beta} =
s_{(\eta,a,q)}s^*_{(\beta,n,q)}$, and~\eqref{eq:product of mu's} becomes
\begin{align*}
w_{\eta,\zeta} w_{\alpha,\beta}
    &= s_{(\eta,a,q)}
        \Big(\sum_{\tau \in s(\zeta) E^N} s_{(\tau, m, q + c(\eta,a))} s^*_{(\tau, m, q + c(\eta, a))}\Big)
            s^*_{(\beta, n,q)} \\
    &= s_{(\eta,a,q)} \Big(\sum_{\lambda \in s(\eta,a,q)\Lambda^m} s_\lambda s^*_\lambda\Big) s^*_{(\beta, n,q)},
\end{align*}
which is equal to $s_{(\eta,a,q)}s^*_{(\beta,n,q)}$ by~(CK4), and hence to $w_{\eta,\beta}$. This
proves that $B_N(v)$ is finite-dimensional with matrix units as claimed.

The indicated inclusion $B_N(v) \subseteq B_{N+1}(v)$ is an immediate consequence of the
Cuntz-Krieger relations. To see that $s_v C^*(\Lambda) s_v$ is the closure of the union of the
$B_N$, observe that $s_v C^*(\Lambda) s_v$ is spanned by elements of the form $s_\mu s^*_\nu$ where
$\mu,\nu \in v\Lambda$ and $s(\mu) = s(\nu)$. Fix such a spanning element. Writing $v = (p(v), 0,
q)$, we have $\mu = (p(\mu), d(\mu), q)$ and $\nu = (p(\nu), d(\nu), q)$ and then
\begin{align*}
    s(\mu) = s(\nu) \implies c(p(\mu), d(\mu)) = c(p(\nu), d(\nu)) &\implies |p(\mu)| = |p(\nu)|\\
        & \implies s_\mu s^*_\nu \in B_{|p(\mu)|}(v).
\end{align*}
So $\bigcup_{N = 1}^\infty B_N(v)$ contains all the spanning elements of $s_v C^*(\Lambda) s_v$,
whence its closure is equal to $s_v C^*(\Lambda) s_v$.

It is routine to check that, for each $N \in \NN$, the set $A_N(p(v)) := \{s_\alpha s^*_\beta :
\alpha,\beta \in p(v)E^N, s(\alpha) = s(\beta)\}$ is a set of nonzero matrix units for a
finite-dimensional subalgebra of $C^*(E)^\gamma$ and that $s_{p(v)}C^*(E)^\gamma s_{p(v)}$ is the
closure of the increasing union of the $A_N$ with inclusions $s_\alpha s^*_\beta \mapsto \sum_{f
\in s(\alpha)E^1} s_{\alpha f} s^*_{\alpha f}$. So $w_{\alpha,\beta} \mapsto s_\alpha s^*_\beta$
determines isomorphisms $B_N(v) \to A_N(p(v))$ which respect the inclusion maps. So the inductive
limits $s_v C^*(\Lambda) s_v$ and $s_{p(v)} C^*(E)^\gamma s_{p(v)}$ are isomorphic also. For the
final statement observe that the proof of \cite[Proposition~4.8]{KP2000} shows that if $\Lambda$ is
cofinal then every $s_v$ is full in $C^*(\Lambda)$, and hence $C^*(\Lambda)$ is strongly Morita
equivalent to $s_v C^*(\Lambda) s_v$ for any $v$.
\end{proof}

\section{Higher-rank graphs with finitely many vertices}\label{sec:unital}

In this section, we completely characterise the higher-rank graphs with finitely many vertices
whose $C^*$-algebras are AF. We then go on to prove that the standard dichotomy for simple graph
$C^*$-algebras persists for row-finite locally convex $k$-graphs with finitely many vertices, and
we describe the structure of non-simple unital finite higher-rank graph $C^*$-algebras.

\begin{rmk}
A standard argument \cite[Proposition~1.4]{KPR1998} implies that if $\Lambda$ is a finitely aligned
$k$-graph, then $C^*(\Lambda)$ is unital if and only if $\Lambda^0$ is finite. So one may regard
the results in this section as results about unital $k$-graph $C^*$-algebras.
\end{rmk}

In the sequel we denote by $\Kk(\Hh)$ the $C^*$-algebra of compact operators on a separable Hilbert
space $\Hh$.  When $\Hh$ has finite dimension $n$ we identify $\Kk(\Hh)$ with $M_n(\CC)$ in the
canonical way. Furthermore, given a countable set $S$, for each $\alpha,\beta \in S$,
$\theta_{\alpha,\beta}$ denotes the canonical matrix unit in $\Kk(\ell^2(S))$. The following
theorem extends~\cite[Lemma~4.2]{EvansPhD}.

\begin{theorem}\label{thm:finite k-graphs}
Let $\Lambda$ be a finitely aligned $k$-graph such that $\Lambda^0$ is finite. Then
\begin{enumerate}
\item $C^*(\Lambda)$ is AF if and only if $\Lambda$ contains no cycles, and
\item $C^*(\Lambda)$ is finite-dimensional if and only if $\Lambda$ contains no cycles and is
    row-finite, in which case there is an isomorphism
    \[
        \bigoplus_{v \in \Lambda^0, v\Lambda = \{v\}} M_{\Lambda v}(\CC) \cong C^*(\Lambda)
    \]
    which takes $\theta_{\alpha,\beta}$ to $s_\alpha s^*_\beta$.
\end{enumerate}
\end{theorem}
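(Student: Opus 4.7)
I would begin with the necessity directions. For part~(1), if $\lambda \in \Lambda$ is a cycle, then $(\mu,\nu) := (\lambda, r(\lambda))$ is a generalised cycle: $s(\mu) = s(\lambda) = r(\lambda) = s(\nu)$, $r(\mu) = r(\nu)$, $\mu \ne \nu$, and for each $\tau \in s(\mu)\Lambda$ the path $\lambda\tau$ itself lies in $\MCE(\mu\tau,\nu)$, so $\MCE(\mu\tau,\nu) \ne \emptyset$. Theorem~\ref{thm:main necessary} then forces $C^*(\Lambda)$ not to be AF. For necessity in part~(2), a finite-dimensional $C^*(\Lambda)$ is in particular AF, so part~(1) gives no cycles; and were $\Lambda$ not row-finite, some $v\Lambda^n$ would be infinite, and then $\{s_\mu s^*_\mu : \mu \in v\Lambda^n\}$ would be an infinite family of pairwise orthogonal nonzero projections (orthogonality follows from $\MCE(\mu,\nu) \cap \Lambda^n = \emptyset$ for $\mu \ne \nu$ together with (CK3)), contradicting $\dim C^*(\Lambda) < \infty$.

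Next, for sufficiency in part~(2), the absence of cycles in each single-colour subgraph (combined with $|\Lambda^0| < \infty$) bounds path degrees by $(|\Lambda^0|-1)\mathbf{1}$, and row-finiteness then yields $|\Lambda| < \infty$. For each $w \in \Lambda^0$, iteratively extending any $\lambda \in w\Lambda$ by incoming edges at $s(\lambda)$ terminates at a source (a vertex $v$ with $v\Lambda = \{v\}$), so $E_w := \bigsqcup_{v\ \text{source}} v\Lambda w$ is a finite exhaustive subset of $w\Lambda$. For distinct $\lambda,\mu \in E_w$, any common extension would need tails lying in $s(\lambda)\Lambda = \{s(\lambda)\}$ and $s(\mu)\Lambda = \{s(\mu)\}$, forcing both tails trivial and $\lambda = \mu$; hence $\MCE(\lambda,\mu) = \emptyset$, and (CK4) yields $s_w = \sum_{\lambda \in E_w} s_\lambda s^*_\lambda$. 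The same MCE analysis shows that for a source $v$ and $\alpha,\beta,\gamma,\delta \in \Lambda v$, $(s_\alpha s^*_\beta)(s_\gamma s^*_\delta) = \delta_{\beta,\gamma}\, s_\alpha s^*_\delta$, and that the matrix-unit systems for distinct sources are orthogonal. The map $\phi \colon \bigoplus_{v:\, v\Lambda = \{v\}} M_{\Lambda v}(\CC) \to C^*(\Lambda)$, $\theta_{\alpha,\beta} \mapsto s_\alpha s^*_\beta$, is therefore a well-defined $*$-homomorphism, injective by simplicity of each $M_{\Lambda v}(\CC)$ (since $\phi(\theta_{v,v}) = s_v \ne 0$), and surjective because for any $\mu,\nu$ with $s(\mu) = s(\nu) = u$ one has $s_\mu s^*_\nu = s_\mu s_u s^*_\nu = \sum_{\alpha \in E_u} s_{\mu\alpha} s^*_{\nu\alpha}$ in the image of $\phi$.

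For sufficiency in part~(1), assuming only $|\Lambda^0| < \infty$ and no cycles, I would apply Proposition~\ref{prp:vertex corners} to reduce to showing each corner $s_v C^*(\Lambda) s_v$ is AF. The degree bound $d(\lambda)_i \le |\Lambda^0|-1$ still holds. For each finite $F \subseteq v\Lambda$ I would form the minimal extension $\widetilde F \subseteq v\Lambda$ containing $F$, closed under taking $\MCE$ of pairs, and closed under the operation $(\mu,\nu,\rho) \mapsto \mu\lambda'$ whenever $\lambda = \nu\lambda' \in \MCE(\nu,\rho)$ with $\nu,\rho \in \widetilde F$. Then $A_{\widetilde F} := \lsp\{s_\mu s^*_\nu : \mu,\nu \in \widetilde F,\ s(\mu) = s(\nu)\}$ is closed under the $*$-operation and, via (CK3), under multiplication. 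The main obstacle is proving $\widetilde F$ is finite in the non-row-finite case, where $\Lambda^n$ can be infinite: finite alignment makes each closure step finite, and a careful combinatorial argument exploiting the boundedness of degrees together with the fact that each newly added path is a concatenation of a path already in $\widetilde F$ with a specific MCE-determined suffix must be used to show the iteration stabilises after finitely many steps. Granting this, $s_v C^*(\Lambda) s_v$ is the closure of the increasing union of finite-dimensional $*$-subalgebras $A_{\widetilde F}$, hence AF.
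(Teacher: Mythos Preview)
Your arguments for the necessity directions and for the sufficiency in part~(2) are correct and essentially match the paper's (modulo a notational slip: your $E_w$ should be $\bigsqcup_{v\text{ source}} w\Lambda v$, not $v\Lambda w$, so that $E_w \subseteq w\Lambda$).

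For the sufficiency in part~(1), however, you take a genuinely different route from the paper, and there is a real gap. The paper does \emph{not} reduce to corners via Proposition~\ref{prp:vertex corners}; instead it proves the more general Proposition~\ref{prp:finite vertices AF} by induction on $|\Lambda^0|$: one finds a vertex $v$ with $v\Lambda = \{v\}$, observes that the ideal $I = C^*(\Lambda;\Ee)s_v C^*(\Lambda;\Ee)$ is isomorphic to $\Kk(\ell^2(\Lambda v))$ by Lemma~\ref{lem:1-vert ideal AF}, identifies the quotient $C^*(\Lambda;\Ee)/I$ with a relative Cuntz--Krieger algebra of a $k$-graph with strictly fewer vertices, and concludes by the inductive hypothesis together with the fact that extensions of AF algebras by AF algebras are AF. This argument never needs to control the size of any set of paths beyond the trivial observation that $v\Lambda = \{v\}$.

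Your approach, by contrast, hinges on the claim that the closure $\widetilde F$ is finite, and you explicitly flag this as ``the main obstacle'' without supplying the argument. This is not a triviality in the non-row-finite setting: bounded degree does not by itself bound the number of paths, and your closure operation $(\mu,\nu,\rho)\mapsto \mu\lambda'$ is strictly stronger than mere $\MCE$-closure, since it can produce paths whose degree is $d(\mu) + (d(\nu)\vee d(\rho)) - d(\nu)$ rather than just $d(\nu)\vee d(\rho)$. A proof that the iteration stabilises would require something like the machinery of $\vee$-closed sets from \cite{RSY2004} adapted to track how every element of $\widetilde F$ decomposes into segments coming from $F$ and from finitely many $\Ext$-sets; you have not done this, and the sentence ``a careful combinatorial argument \dots\ must be used'' is an admission rather than a proof. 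The paper's induction-plus-extension argument buys you exactly the ability to avoid this combinatorial difficulty, at the modest cost of invoking the (standard) fact that AF is closed under extensions.
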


Before proving the theorem, we establish two technical results. Recall from \cite{Sims2006} that
\emph{satiated} collections $\Ee$ (see \cite[Definition~4.1]{Sims2006}) of finite exhaustive
subsets of $\Lambda$ index the relative Cuntz-Krieger algebras $C^*(\Lambda;\Ee)$ which interpolate
between the Toeplitz algebra $\Tt C^*(\Lambda)$ and the Cuntz-Krieger algebra $C^*(\Lambda)$
\cite[Corollary~5.6]{Sims2006}. Moreover, all quotients of $C^*(\Lambda)$ by gauge-invariant ideals
can be realised as relative Cuntz-Krieger algebras associated to complements of saturated
hereditary subgraphs \cite[Theorem~5.5]{Sims2006a}.

\begin{lemma}\label{lem:1-vert ideal AF}
Let $\Lambda$ be a finitely aligned $k$-graph. Suppose that $\Lambda^0$ is finite, and that
$\Lambda$ contains no cycles. Let $\Ee$ be a satiated subset of $\FE(\Lambda)$. If $v \in
\Lambda^0$ satisfies $v \Lambda = \{v\}$, then $C^*(\Lambda; \Ee) s_v C^*(\Lambda; \Ee) \cong
\Kk(\ell^2(\Lambda v))$.
\end{lemma}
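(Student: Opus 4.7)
The plan is to exhibit the ideal $C^*(\Lambda;\Ee) s_v C^*(\Lambda;\Ee)$ explicitly as the closed span of a system of matrix units indexed by $\Lambda v \times \Lambda v$, and then identify this span with $\Kk(\ell^2(\Lambda v))$ in the obvious way.

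First I would verify that for $\alpha,\beta \in \Lambda v$, we have $s_\alpha^* s_\beta = \delta_{\alpha,\beta} s_v$. Indeed, relation~(\ref{it:CK3}) gives $s_\alpha^* s_\beta = \sum_{\alpha\mu = \beta\nu \in \MCE(\alpha,\beta)} s_\mu s_\nu^*$. Any such $\mu$ satisfies $r(\mu) = s(\alpha) = v$, so $\mu \in v\Lambda = \{v\}$ forces $\mu = v$ and hence $\alpha\mu = \alpha$; symmetrically $\nu = v$ and $\alpha = \beta$. Thus $\MCE(\alpha,\beta) = \emptyset$ when $\alpha \ne \beta$, and $\MCE(\alpha,\alpha) = \{\alpha\}$, yielding the claimed formula. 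From this, the family $\{s_\alpha s_\beta^* : \alpha,\beta \in \Lambda v\}$ satisfies the matrix unit relations $(s_\alpha s_\beta^*)(s_\gamma s_\delta^*) = \delta_{\beta,\gamma} s_\alpha s_\delta^*$ and $(s_\alpha s_\beta^*)^* = s_\beta s_\alpha^*$, and each is nonzero because $s_\beta^* s_\alpha s_\alpha^* s_\beta = s_v \ne 0$ (vertex projections are nonzero in any relative Cuntz--Krieger algebra by a gauge-invariant uniqueness argument). So $I_0 := \lsp\{s_\alpha s_\beta^* : \alpha,\beta \in \Lambda v\}$ is isomorphic to the finite-rank operators on $\ell^2(\Lambda v)$ under $s_\alpha s_\beta^* \leftrightarrow \theta_{\alpha,\beta}$, with closure isomorphic to $\Kk(\ell^2(\Lambda v))$.

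Finally I would show that $\overline{I_0} = C^*(\Lambda;\Ee) s_v C^*(\Lambda;\Ee)$. The inclusion $\subseteq$ is immediate from the factorisation $s_\alpha s_\beta^* = s_\alpha s_v s_\beta^*$. For $\supseteq$, a dense set of elements of the ideal is spanned by products $(s_\mu s_\nu^*) s_v (s_\rho s_\sigma^*)$ with $s(\mu) = s(\nu)$ and $s(\rho) = s(\sigma)$. Using $s_v s_\rho = s_\rho$ if $r(\rho) = v$ and $0$ otherwise (with the adjoint statement for $s_\nu^* s_v$), and invoking $v\Lambda = \{v\}$ to deduce $r(\nu) = v \Rightarrow \nu = v$ and $r(\rho) = v \Rightarrow \rho = v$, these products collapse to $s_\mu s_\sigma^*$ with $\mu,\sigma \in \Lambda v$, which lies in $I_0$. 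The main subtlety I expect is bookkeeping the nonvanishing of $s_v$ in the relative algebra $C^*(\Lambda;\Ee)$ -- an $\Ee$-dependent fact handled by \cite[Corollary~5.6]{Sims2006} rather than by the combinatorics of $\Lambda$; once this is in hand, everything else reduces to the algebraic identities above and the hypothesis $v\Lambda = \{v\}$.
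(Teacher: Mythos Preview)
Your proposal is correct and follows essentially the same route as the paper: compute $\Lmin(\alpha,\beta)$ (equivalently, $\MCE(\alpha,\beta)$) for $\alpha,\beta\in\Lambda v$ using $v\Lambda=\{v\}$, deduce the matrix-unit relations, and identify the ideal with $\Kk(\ell^2(\Lambda v))$. The paper's proof is terser --- it asserts $C^*(\Lambda;\Ee)s_vC^*(\Lambda;\Ee)=\clsp\{s_\mu s_\nu^*:\mu,\nu\in\Lambda v\}$ and the nonvanishing of the $s_\mu s_\nu^*$ without the explicit bookkeeping you supply --- but the underlying argument is the same.
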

\begin{proof}
Since $v \Lambda = \{v\}$ for $\mu,\nu \in \Lambda v$, we have
\[
\Lmin(\mu,\nu)
    = \begin{cases}
        \{(v,v)\} &\text{ if $\mu = \nu$} \\
        \emptyset &\text{ otherwise.}
    \end{cases}
\]
In particular, the relative Cuntz-Krieger relations imply that if $\mu,\nu,\alpha,\beta \in \Lambda
v$, then $s_\mu s^*_\nu s_\alpha s^*_\beta = \delta_{\nu,\alpha} s_\mu s^*_\beta$. Hence
$C^*(\Lambda; \Ee) s_v C^*(\Lambda; \Ee) = \clsp \{s_\mu s^*_\nu : \mu,\nu \in \Lambda v\}$, and
that there is an isomorphism of $\Kk(\ell^2(\Lambda v))$ with $C^*(\Lambda;\Ee) s_v C^*(\Lambda;
\Ee)$ which takes $\theta_{\mu,\nu}$ to $s_\mu s^*_\nu$.
\end{proof}

\begin{prop}\label{prp:finite vertices AF}
Let $\Lambda$ be a finitely aligned $k$-graph such that $\Lambda^0$ is finite and such that
$\Lambda$ contains no cycles, and let $\Ee$ be a satiated subset of $\FE(\Lambda)$. Then
$C^*(\Lambda; \Ee)$ is AF.
\end{prop}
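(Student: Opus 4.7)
The plan is to proceed by induction on $|\Lambda^0|$. The base case $|\Lambda^0|=1$ is immediate: absence of cycles forces $\Lambda=\Lambda^0=\{v\}$, so $C^*(\Lambda;\Ee)\cong \CC$, which is AF.

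For the inductive step, I would first locate a vertex $v\in\Lambda^0$ with $v\Lambda=\{v\}$. Such a $v$ must exist: if every $w$ satisfied $w\Lambda\supsetneq\{w\}$, then iteratively choosing $w_0,w_1,w_2,\dots$ together with positive-degree paths $\lambda_{n+1}$ with $r(\lambda_{n+1})=w_n$ and $s(\lambda_{n+1})=w_{n+1}$ would produce an infinite sequence in the finite set $\Lambda^0$; the first repeat $w_i=w_j$ (with $i<j$) would then furnish the cycle $\lambda_{i+1}\lambda_{i+2}\cdots\lambda_j$, contradicting our hypothesis. Applying Lemma~\ref{lem:1-vert ideal AF} at such a $v$, the ideal $I:=C^*(\Lambda;\Ee)s_v C^*(\Lambda;\Ee)$ is isomorphic to $\Kk(\ell^2(\Lambda v))$, and is in particular AF.

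The key technical step is then to identify the quotient $C^*(\Lambda;\Ee)/I$ with a relative Cuntz-Krieger algebra $C^*(\Gamma;\Ee')$, where $\Gamma:=\Lambda\setminus\Lambda\{v\}$ (equivalently $\Lambda\setminus\Lambda v$, since $v\Lambda=\{v\}$ means there are no nontrivial paths into $v$) is the sub-$k$-graph obtained by deleting $v$ and all paths with source $v$, and $\Ee'$ is a suitably induced satiated collection of finite exhaustive subsets of $\Gamma$. Gauge-invariance of $I$ (note $s_v$ is fixed by $\gamma$), together with the structure theory of gauge-invariant ideals from \cite{Sims2006, Sims2006a} (in particular the machinery underlying Corollary~5.3 and Theorem~5.5 of \cite{Sims2006a}) adapted to the relative Cuntz-Krieger setting, should yield this identification. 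Since $\Gamma$ is finitely aligned with $|\Gamma^0|=|\Lambda^0|-1$ and inherits the property of containing no cycles from $\Lambda$, the inductive hypothesis applies to give that $C^*(\Gamma;\Ee')$ is AF.

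To conclude, the short exact sequence
\[
0 \longrightarrow I \longrightarrow C^*(\Lambda;\Ee) \longrightarrow C^*(\Lambda;\Ee)/I \longrightarrow 0
\]
exhibits $C^*(\Lambda;\Ee)$ as an extension of an AF algebra by an AF algebra, so $C^*(\Lambda;\Ee)$ is AF by \cite[Theorem~III.6.3]{Davidson1996}. The main obstacle is the middle step: correctly pinning down the satiated set $\Ee'$ on $\Gamma$ induced by the quotient, and verifying that the quotient map is the expected universal one. The existence of $v$, the invocation of Lemma~\ref{lem:1-vert ideal AF} to handle the ideal, and the AF-by-AF extension argument are all essentially immediate; the delicacy is purely in the gauge-invariant ideal identification for relative Cuntz-Krieger algebras.
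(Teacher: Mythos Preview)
Your proposal follows exactly the same inductive strategy as the paper: locate a vertex $v$ with $v\Lambda=\{v\}$, use Lemma~\ref{lem:1-vert ideal AF} to see that the ideal $I$ it generates is AF, identify the quotient as a relative Cuntz--Krieger algebra of a strictly smaller $k$-graph, and close with the AF-by-AF extension result. The one point where you diverge from the paper is in your choice of the quotient graph. You set $\Gamma=\Lambda\setminus\Lambda\{v\}$ and assert $|\Gamma^0|=|\Lambda^0|-1$; the paper instead defines $H$ to be the set of \emph{all} vertices $w$ with $s_w\in I$ and takes $\Gamma=\Lambda\setminus\Lambda H$, claiming only $\Gamma^0\subseteq\Lambda^0\setminus\{v\}$. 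The reason is that $\{v\}$, while hereditary (since $v\Lambda=\{v\}$), need not be saturated relative to $\Ee$: if some $w\neq v$ has a finite exhaustive set in $\Ee$ consisting entirely of paths with source $v$, then $s_w\in I$ as well, and the image of $s_w$ in the quotient is zero --- so the quotient cannot be realised as $C^*(\Lambda\setminus\Lambda\{v\};\Ee')$ for any satiated $\Ee'$, since vertex projections in relative Cuntz--Krieger algebras are nonzero. You correctly flag the quotient identification as the delicate step, and the fix is exactly this: pass to the full set $H$ of vertices absorbed by $I$, then invoke the gauge-invariant uniqueness theorem \cite[Theorem~6.1]{Sims2006} to identify $C^*(\Lambda;\Ee)/I$ with $C^*(\Gamma;\Ee')$ for the induced $\Ee'$. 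The induction still goes through because $v\in H$ guarantees $|\Gamma^0|<|\Lambda^0|$.
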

\begin{proof}
We proceed by induction on $|\Lambda^0|$. If $|\Lambda^0| = 1$, then since $\Lambda$ has no cycles,
$\Lambda = \{v\}$ where $v$ is the unique element of $\Lambda^0$, and hence $C^*(\Lambda) = \CC$ is
certainly AF.

Now suppose that for any finitely aligned $k$-graph $\Gamma$ with no cycles and with fewer vertices
than $\Lambda$, and for any satiated subset $\Ee'$ of $\FE(\Gamma)$, the $C^*$-algebra $C^*(\Gamma;
\Ee')$ is AF.  Let $\{s_{\lambda} : \lambda \in \Lambda\}$ denote the universal generating relative
Cuntz-Krieger $(\Lambda;\Ee)$-family in $C^*(\Lambda; \Ee)$. Since $\Lambda^0$ is finite, and since
$\Lambda$ contains no cycles, there exists $v \in \Lambda^0$ such that $v\Lambda = \{v\}$, and then
Lemma~\ref{lem:1-vert ideal AF} implies that the ideal $I = C^*(\Lambda; \Ee) s_v C^*(\Lambda;
\Ee)$ is AF. Let $H := \{v \in \Lambda^0 : s_v \not\in I\}$ and let
\[\textstyle
\Ee' := \big\{E \in \FE(\Lambda) \setminus \Ee : \prod_{\lambda \in E}
(s_{r(\lambda)} - s_\lambda s^*_\lambda) \in I\big\}.
\]
If $H = \emptyset$, then $1_{C^*(\Lambda; \Ee)} = \sum_{v \in \Lambda^0} s_v \in I$, so
$C^*(\Lambda; \Ee) = I$ is AF and we are done. So suppose that $H \not= \emptyset$. Let $\Gamma :=
\Lambda \setminus \Lambda H$. An application of the gauge-invariant uniqueness theorem
\cite[Theorem~6.1]{Sims2006} for relative Cuntz-Krieger algebras shows that $C^*(\Gamma; \Ee')
\cong C^*(\Lambda; \Ee)/I$. Moreover $\Gamma^0 \subset \Lambda^0 \setminus \{v\}$, so $\Gamma$ has
fewer vertices than $\Lambda$. The inductive hypothesis therefore implies that $C^*(\Lambda;
\Ee)/I$ is AF. Since $I$ is AF and the class of AF algebras is closed under extensions (see, for
example, \cite[Theorem~III.6.3]{Davidson1996}), it follows that $C^*(\Lambda)$ is itself AF.
\end{proof}

\begin{proof}[Proof of Theorem~\ref{thm:finite k-graphs}]
(1) If $\Lambda$ contains a cycle, then Theorem~\ref{thm:main necessary} implies that
$C^*(\Lambda)$ is not AF; and if $\Lambda$ contains no cycle, then $C^*(\Lambda)$ is AF by
Proposition~\ref{prp:finite vertices AF} applied with $\Ee = \FE(\Lambda)$.

(2) First suppose that $\Lambda$ is not row-finite. Then there exist $v \in \Lambda^0$ and $n \in
\NN^k$ such that $v\Lambda^n$ is infinite. Hence $\{s_\lambda s^*_\lambda : \lambda \in
v\Lambda^n\}$ is an infinite family of mutually orthogonal nonzero projections in $C^*(\Lambda)$,
whence $C^*(\Lambda)$ is not finite-dimensional. Now suppose that $\Lambda$ is row-finite and
contains no cycles. Let $\Lambda^0_\sources$ denote the collection of vertices $v \in \Lambda^0$
such that $v\Lambda = \{v\}$. Since $\Lambda$ contains no cycles, $\Lambda^n = \emptyset$ whenever
$|n| \ge |\Lambda^0|$. Since $\Lambda^0$ is finite and $\Lambda$ is row-finite, $\Lambda$ itself is
finite. In particular, $\Lambda \Lambda^0_\sources$ is finite. Fix $w \in \Lambda^0$. We claim that
$w\Lambda \Lambda^0_\sources$ is exhaustive. Indeed, fix $\lambda \in w\Lambda$. As above, the set
$\{n \in \NN^k : s(\lambda) \Lambda^n \not= \emptyset\}$ is bounded; let $n$ be a maximal element
of this set, and fix $\tau \in s(\lambda)\Lambda^n$. By definition of $n$, we have $s(\tau) \in
\Lambda^0_\sources$, so $\lambda\tau \in w\Lambda \Lambda^0_\sources$ trivially has a common
extension with $\lambda$. By definition of $\Lambda^0_\sources$, as in Lemma~\ref{lem:1-vert ideal
AF} we have $s^*_\mu s_\nu = \delta_{\mu,\nu} s_{s(\mu)}$ for $\mu,\nu \in \Lambda^0_\sources$.
Hence \cite[Proposition~3.5]{RSY2004} implies that
\[
s_w = \sum_{\lambda \in w\Lambda\Lambda^0_\sources}
    s_\lambda s^*_\lambda \prod_{\lambda\lambda' \in w\Lambda\Lambda^0_\sources}
            s_{\lambda\lambda'} s^*_{\lambda\lambda'}
    = \sum_{\lambda \in w\Lambda\Lambda^0_\sources} s_\lambda s^*_\lambda.
\]
Hence
\[
C^*(\Lambda) = \bigoplus_{v \in \Lambda^0_\sources}
            \lsp\{s_\alpha s^*_\beta : \alpha,\beta \in \Lambda v\}.
\]
Lemma~\ref{lem:1-vert ideal AF} implies that each $\lsp\{s_\alpha s^*_\beta : \alpha,\beta \in
\Lambda v\} \cong M_{\Lambda v}(\CC)$.
\end{proof}

We show next that for row-finite locally convex $k$-graphs with finitely many vertices, the
standard dichotomy for simple graph $C^*$-algebras persists: if $\Lambda$ is a row-finite locally
convex $k$-graph with finitely many vertices and $C^*(\Lambda)$ is simple then $C^*(\Lambda)$ is
either finite-dimensional or purely infinite. It seems likely that a similar result holds for
arbitrary $k$-graphs with finitely many vertices (though ``finite dimensional'' would be replaced
with ``isomorphic to $\Kk(H)$ for some finite- or countably-infinite-dimensional Hilbert space''),
but the arguments provided here would require substantial modification. We first need two technical
results.

\begin{lemma} \label{lem:P_rho def}
Let $\Lambda$ be a row-finite $k$-graph, and suppose that $\rho \in \Lambda$ is a cycle with no
entrance. For each $m \in \NN^k$ such that $m \wedge d(\rho) = 0$, define a map $P_\rho :
v\Lambda^m \to v\Lambda^m$ by $P_\rho(\mu) := (\rho\mu)(0, m)$. Then $P_\rho$ is bijective.
\end{lemma}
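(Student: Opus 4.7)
The plan is to prove bijectivity in three quick steps: confirm well-definedness, reduce to surjectivity by finiteness, and then exhibit preimages via the ``no entrance'' hypothesis.

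First I would check that $P_\rho$ really maps into $v\Lambda^m$, where $v := r(\rho) = s(\rho)$. Given $\mu \in v\Lambda^m$ we have $r(\mu) = v = s(\rho)$, so $\rho\mu$ is defined with $d(\rho\mu) = d(\rho) + m$. Since $m \le d(\rho)+m$, the factorisation property produces $(\rho\mu)(0,m) \in r(\rho\mu)\Lambda^m = v\Lambda^m$.

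Next, since $\Lambda$ is row-finite, $v\Lambda^m$ is finite, so it suffices to prove surjectivity. To this end, fix $\alpha \in v\Lambda^m$. The hypothesis that $\rho$ has no entrance means precisely that the generalised cycle $(\rho,v)$ has no entrance, i.e.\ $\MCE(\tau,\rho) \neq \emptyset$ for every $\tau \in v\Lambda$. Applying this with $\tau = \alpha$ gives $\lambda \in \MCE(\alpha,\rho)$. The hypothesis $m \wedge d(\rho) = 0$ forces $d(\lambda) = m \vee d(\rho) = m + d(\rho)$, and by definition of $\MCE$ there exist $\alpha' \in \Lambda^{d(\rho)}$ and $\mu \in \Lambda^m$ with $\lambda = \alpha\alpha' = \rho\mu$. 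Then $r(\mu) = s(\rho) = v$, so $\mu \in v\Lambda^m$, and
\[
P_\rho(\mu) = (\rho\mu)(0,m) = \lambda(0,m) = (\alpha\alpha')(0,m) = \alpha,
\]
establishing surjectivity and hence bijectivity.

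There is essentially no obstacle beyond confirming that the ``no entrance'' assumption supplies the needed minimal common extension; the condition $m \wedge d(\rho)=0$ is used only to simplify $m \vee d(\rho)$ to $m + d(\rho)$ so that the factorisation $\lambda = \rho\mu$ has $\mu$ of degree exactly $m$.
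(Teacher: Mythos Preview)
Your proof is correct and follows essentially the same route as the paper: both arguments use the no-entrance hypothesis to produce a minimal common extension of an arbitrary element of $v\Lambda^m$ with $\rho$, read off a preimage under $P_\rho$ from the resulting factorisation, and then invoke row-finiteness to conclude bijectivity from surjectivity. Your version is slightly more explicit in checking well-definedness and in isolating where $m \wedge d(\rho) = 0$ is used, but the substance is the same.
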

\begin{proof}
Fix $\mu\in v\Lambda^m$. Since $\rho$ has no entrance, $\Lmin(\rho,\mu) \not= \emptyset$. Fix
$(\sigma,\tau) \in \Lmin(\rho,\mu)$. Then in particular, $\tau \in s(\mu)\Lambda^m$. Now
$(\mu\tau)(0, d(\rho)) = \rho$ because $\rho$ does not have an entrance. Hence $\mu =
P_\rho((\mu\tau)(d(\rho), d(\rho)+m))$. Since $\mu \in v\Lambda^m$ was arbitrary, it follows that
$P_\rho$ is surjective. Since $\Lambda$ is row-finite, $v\Lambda^n$ is finite, so that
$P_\rho|_{v\Lambda^n}$ is surjective implies that it is bijective.
\end{proof}

\begin{lemma} \label{lem:unique entrance}
Let $\Lambda$ be a row-finite $k$-graph, and suppose that $\rho \in \Lambda$ is a cycle with no
entrance. For each $\mu \in r(\rho)\Lambda$ such that $d(\mu) \wedge d(\rho) = 0$ and for each $n
\in \NN$, there is a unique element of $s(\mu)\Lambda^{n d(\rho)}$. Moreover, there exists $p \in
\NN$ such that the unique element of $s(\lambda) \Lambda^{p d(\rho)}$ is a cycle.
\end{lemma}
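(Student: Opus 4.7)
The plan is to first establish the structural identity $v\Lambda^{nd(\rho)} = \{\rho^n\}$ for all $n \ge 0$, where $v = r(\rho) = s(\rho)$. For $n = 1$, any $\tau \in v\Lambda^{d(\rho)}$ satisfies $\MCE(\tau, \rho) \ne \emptyset$ by the no-entrance hypothesis, and any common extension $\xi$ has degree $d(\tau) \vee d(\rho) = d(\rho) = d(\tau)$, forcing $\tau = \xi = \rho$. An easy induction using the factorisation property (factoring any element of $v\Lambda^{(n+1)d(\rho)}$ through its initial degree-$d(\rho)$ segment) then extends the identity to every $n$.

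For the first assertion, fix $\mu \in r(\rho)\Lambda$ with $d(\mu) \wedge d(\rho) = 0$ and $n \in \NN$, and use a counting argument. By the factorisation property together with the first step, every path in $v\Lambda^{d(\mu) + nd(\rho)}$ decomposes uniquely as $\rho^n \tau$ with $\tau \in v\Lambda^{d(\mu)}$, so $|v\Lambda^{d(\mu) + nd(\rho)}| = |v\Lambda^{d(\mu)}|$. Factoring the other way, the same set is the disjoint union over $\alpha \in v\Lambda^{d(\mu)}$ of $\alpha \cdot s(\alpha)\Lambda^{nd(\rho)}$, so $|v\Lambda^{d(\mu) + nd(\rho)}| = \sum_{\alpha} |s(\alpha)\Lambda^{nd(\rho)}|$. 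A short direct check shows $P_{\rho^n} = P_\rho^n$, and then Lemma~\ref{lem:P_rho def} makes $\tau \mapsto P_\rho^n(\tau)$ a bijection of $v\Lambda^{d(\mu)}$; matching the two indexings via this bijection forces $|s(\alpha)\Lambda^{nd(\rho)}| = 1$ for every $\alpha \in v\Lambda^{d(\mu)}$, and in particular for $\alpha = \mu$.

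For the cycle assertion, write $\xi_n$ for the unique element of $s(\mu)\Lambda^{nd(\rho)}$ and let $\tau_n \in v\Lambda^{d(\mu)}$ be the unique path with $\mu\xi_n = \rho^n \tau_n$; then $\tau_n = P_\rho^{-n}(\mu)$. Since $\Lambda$ is row-finite the set $v\Lambda^{d(\mu)}$ is finite, so by Lemma~\ref{lem:P_rho def} the bijection $P_\rho$ has finite order as a permutation of $v\Lambda^{d(\mu)}$; take $p$ to be this order. Then $\tau_p = \mu$, and so $s(\xi_p) = s(\mu\xi_p) = s(\rho^p \tau_p) = s(\mu) = r(\xi_p)$ while $d(\xi_p) = pd(\rho) \ne 0$, so $\xi_p$ is a cycle. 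Most of the work is routine once the counting framework and the identity $P_{\rho^n} = P_\rho^n$ are in place; I anticipate the latter, though straightforward, to be the most calculation-intensive step, as it requires tracking two successive factorisations.
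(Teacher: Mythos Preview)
Your proof is correct, and for the second (cycle) assertion it is essentially the paper's argument: both exploit that $P_\rho$ is a finite-order permutation of $v\Lambda^{d(\mu)}$, locate a $p$ with $P_\rho^{-p}(\mu)=\mu$, and read off $s(\xi_p)=s(\mu)$ from the factorisation $\mu\xi_p=\rho^p\mu$.

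For the uniqueness assertion, however, your route differs from the paper's. The paper argues \emph{directly}: it first proves the identity $P_\rho^n(\mu)=(\rho^n\mu)(0,m)$, then explicitly writes down the element $\tau_n:=\big(\rho^nP_\rho^{-n}(\mu)\big)(m,m+nd(\rho))$ of $s(\mu)\Lambda^{nd(\rho)}$, and finally shows that any $\lambda\in s(\mu)\Lambda^{nd(\rho)}$ must equal $\tau_n$ by using that $(\mu\lambda)(0,nd(\rho))=\rho^n$ (no entrance) and applying $P_\rho^{-n}$ to the remaining segment. You instead run a double-count of $v\Lambda^{d(\mu)+nd(\rho)}$: factoring off the initial $nd(\rho)$ gives a bijection with $v\Lambda^{d(\mu)}$ via $\tau\mapsto\rho^n\tau$, while factoring off the initial $d(\mu)$ partitions the set into blocks $\alpha\cdot s(\alpha)\Lambda^{nd(\rho)}$; the identity $P_{\rho^n}=P_\rho^n$ (which is exactly the paper's displayed formula~\eqref{eq:P^n_rho}) then shows $\rho^n\tau$ lands in the $\alpha=P_\rho^n(\tau)$ block, and bijectivity of $P_\rho^n$ forces every block to have size exactly one. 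This is a pleasant combinatorial reformulation that delivers uniqueness for \emph{all} $\alpha\in v\Lambda^{d(\mu)}$ at once, whereas the paper's argument treats a single $\mu$ but is slightly more constructive (it names the unique element). Both approaches rest on the same two pillars: Lemma~\ref{lem:P_rho def} and the iterated-factorisation identity for $P_\rho^n$.
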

\begin{proof}
Fix $\mu \in r(\rho)\Lambda$ such that $d(\mu) \wedge d(\rho) = 0$, and let $m = d(\mu)$. Observe
that for each $n \in \NN$,
\begin{equation}\label{eq:P^n_rho}
P^n_\rho(\mu)
    = P_\rho(P^{n-1}_\rho(\mu))
    = (\rho P^{n-1}_\rho(\mu))(0, m)
    = \dots
    = (\rho^n\mu)(0,m).
\end{equation}

Fix $n \in \NN$. Let $\tau_n := \big(\rho^n P^{-n}_\rho(\mu)\big)(m, m + nd(\rho))$. Since $\mu =
P^n_\rho(P^{-n}_\rho(\mu)) = \big(\rho^n P^{-n}_\rho(\mu)\big)(0,m)$, we have $\mu\tau_n = \rho^n
P^n_\rho(\mu)$, and in particular, $\tau_n \in s(\mu)\Lambda^{n d(\rho)}$. To see that
$s(\mu)\Lambda^{n d(\rho)} = \{\tau_n\}$, let $\lambda \in s(\mu)\Lambda^{n d(\rho)}$. Then
$(\mu\lambda)(0, n d(\rho)) = \rho^n$ because $\rho$ has no entrance. Let $\alpha := (\mu\lambda)(n
d(\rho), m + n d(\rho))$, so $\rho^n\alpha = \mu\lambda$. Then $P^n_\rho(\alpha) = \mu$
by~\eqref{eq:P^n_rho}, so $\alpha = P^{-n}_\rho(\mu)$, and hence $\mu\lambda = \rho^n\alpha =
\rho^n P^{-n}_\rho(\mu)$. Thus $\lambda = \tau_n$.

Since $\Lambda^m$ is finite, there exist $l, n \in \NN$ with $l < n$ such that $P^l_{\rho}(\mu) =
P^n_{\rho(\mu)}$. Let $p := n-l$. Then
\[
\mu = P^{-n}_\rho(P^n_\rho(\mu)) = P^{-n}_\rho(P^l_\rho(\mu)) = P^{-(n-l)}_\rho(\mu) = P^p_\rho(\mu),
\]
and then by definition of the $\tau_n$, we have
\[
s(\tau_p)
    = s\big(\big(\rho^p P^{-p}_\rho(\mu)\big)(m, m + p d(\rho))\big)
    = s(P^{-(n-l)}_\rho(\mu))
    = s(\mu)
    = r(\tau_p). \qedhere
\]
\end{proof}

In the following proof and some later results, given a cycle $\tau$ in a $k$-graph $\Lambda$, we
write $\tau^\infty$ for the unique element of $W_\Lambda$ such that $d(\tau^\infty)_i$ is equal to
$\infty$ when $d(\tau)_i > 0$ and equal to $0$ when $d(\tau)_i = 0$, and such that
$(\tau^\infty)(n\cdot d(\tau), (n+1)\cdot d(\tau)) = \tau$ for all $n \in \NN$.

\begin{cor}
Let $\Lambda$ be a row-finite locally convex $k$-graph such that $|\Lambda^0|$ is finite and
$C^*(\Lambda)$ is simple. If $\Lambda$ contains no cycles, then $\Lambda^0$ contains a unique
source $v$, and $C^*(\Lambda) \cong M_{\Lambda v}(\CC)$. Otherwise, $C^*(\Lambda)$ is purely
infinite.
\end{cor}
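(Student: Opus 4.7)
The argument splits on whether $\Lambda$ contains a cycle. If not, Theorem~\ref{thm:finite k-graphs}(2) gives $C^*(\Lambda) \cong \bigoplus_{v:v\Lambda=\{v\}} M_{\Lambda v}(\CC)$, and simplicity forces this direct sum to have exactly one summand: $\Lambda^0$ contains a unique source $v$, and the isomorphism reduces to $C^*(\Lambda)\cong M_{\Lambda v}(\CC)$.

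Now suppose $\Lambda$ contains a cycle. Since $|\Lambda^0|$ is finite, $C^*(\Lambda)$ is unital, so by the standard result that a simple unital $C^*$-algebra containing an infinite projection is purely infinite, it is enough to produce an infinite projection in $C^*(\Lambda)$. If some cycle $\rho$ in $\Lambda$ has an entrance then the pair $(\rho,r(\rho))$ is a generalised cycle with an entrance, and Corollary~\ref{cor:inf proj} delivers the required infinite projection. The remaining task is to show that the alternative --- every cycle in $\Lambda$ is entrance-free --- contradicts simplicity.

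Assume for contradiction that every cycle is entrance-free; fix such a cycle $\rho$ at $v:=r(\rho)$ and set $N:=d(\rho)$. Lemma~\ref{lem:P_rho def} makes $P_\rho$ a bijection of each finite set $v\Lambda^m$ with $m\wedge N=0$, and a combination of Lemma~\ref{lem:unique entrance} and the finiteness of $\Lambda^0$ lets me extract a single positive integer $q$ with $P_\rho^q=\id$ on every such $v\Lambda^m$ (see the next paragraph). For any $x\in v\partial\Lambda$, iterated ``no entrance'' gives $x(pN,(p+1)N)=\rho$ for all $p$, so in particular $qN\le d(x)$; double-factorising $x(0,qN+m)$ as $\rho^q\cdot x(qN,qN+m)$ and as $x(0,m)\cdot\tau$ then yields $x(0,m)=P_\rho^q(x(qN,qN+m))=x(qN,qN+m)$ for every admissible $m$. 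Combined with the fact that the $N$-slices of $x$ are all $\rho$, this forces $\sigma^{qN}(x)=x$ for every $x\in v\partial\Lambda$, contradicting condition~(2) of the aperiodicity characterisation of \cite[Proposition~3.6 and Theorem~4.1]{LS2010}; by equivalence with condition~(4) of the same characterisation, this contradicts the simplicity of $C^*(\Lambda)$.

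The main obstacle is the uniform choice of $q$. Because $P_\rho(\mu_1\mu_2)=P_\rho(\mu_1)\cdot\mu_2'$ with $\mu_2'$ twisted by the factorisation of $\tau\mu_2$ through $s(P_\rho(\mu_1))$, one cannot simply take an LCM of the orders of $P_\rho$ on the single-colour edge sets $v\Lambda^{e_j}$. I would handle this by viewing the twist $\mu_2\mapsto\mu_2'$ as itself a bijection of a finite set at a shifted base vertex (again an application of Lemma~\ref{lem:P_rho def}) and using that, thanks to $|\Lambda^0|<\infty$, only finitely many such base vertices and finitely many coloured edges at each arise across all the relevant $m$. Thus the collection of permutations whose orders one must control is finite, and their LCM gives the required $q$. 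The bookkeeping is technical but conceptually routine.
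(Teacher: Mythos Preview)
Your second paragraph contains a genuine error: the claim that a simple unital $C^*$-algebra containing an infinite projection is automatically purely infinite is false. R{\o}rdam has constructed simple, separable, nuclear, unital $C^*$-algebras containing both a nonzero finite projection and an infinite projection; such an algebra is not purely infinite. The paper does not route through Corollary~\ref{cor:inf proj} at this point. Instead it first extracts from simplicity, via \cite[Theorem~3.4]{RS2009}, that $\Lambda$ is cofinal and has no local periodicity, and then invokes the $k$-graph-specific result \cite[Proposition~8.8]{Sims2006a}: for row-finite locally convex $\Lambda$, cofinality together with aperiodicity and the existence of a cycle with an entrance already yield pure infiniteness of $C^*(\Lambda)$. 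So the target of the reductio is correct --- one must exhibit a cycle with an entrance --- but the bridge from that to pure infiniteness is the cited structural result, not the mere presence of an infinite projection.

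Your reductio in the third and fourth paragraphs also has a gap: the uniform $q$ is not established. Even granting that the twists you describe are bijections of finite sets indexed by finitely many vertices and colours, $P_\rho$ on $v\Lambda^m$ is a composite of roughly $|m|$ such twists, and a composite of permutations each of bounded order can have arbitrarily large order; the LCM of the orders of the individual twists does not bound the order of $P_\rho$ on $v\Lambda^m$ as $m$ ranges over all degrees orthogonal to $d(\rho)$. The paper sidesteps this entirely by choosing $\rho$ so that $I_\rho := \{i : d(\rho)_i \neq 0\}$ is \emph{maximal} among all cycles. Maximality forces any path $\mu \in r(\rho)\Lambda$ with $d(\mu) \wedge d(\rho) = 0$ to visit pairwise distinct vertices (otherwise Lemma~\ref{lem:unique entrance} manufactures a cycle with strictly larger support), so such $\mu$ have length bounded by $|\Lambda^0|$. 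One can therefore find $\mu$ whose source emits no edges in directions perpendicular to $d(\rho)$; at that source the boundary-path space is the singleton $\{\tau^\infty\}$ for the cycle $\tau$ supplied by Lemma~\ref{lem:unique entrance}, and this gives local periodicity at $s(\mu)$ rather than at $r(\rho)$. No uniform period over infinitely many sets $v\Lambda^m$ is needed.
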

\begin{proof}
Suppose that $\Lambda$ does not contain a cycle. Then $\Lambda$ is finite by
\cite[Remark~4.1]{EvansPhD}, and then \cite[Lemma~4.2]{EvansPhD} shows that $C^*(\Lambda)$ is equal
to the direct sum over all sources $w$ in $\Lambda^0$ of $M_{\Lambda w}(\CC)$. Since $C^*(\Lambda)$
is simple, there can be just one summand, and the result follows.

Now suppose that $\Lambda$ contains a cycle. Since $C^*(\Lambda)$ is simple, $\Lambda$ is cofinal
and has no local periodicity by \cite[Theorem~3.4]{RS2009}. Hence if $\Lambda$ contains a cycle
with an entrance, then \cite[Proposition~8.8]{Sims2006a} implies that $C^*(\Lambda)$ is purely
infinite. It therefore suffices to show that $\Lambda$ contains a cycle with an entrance.

We suppose for contradiction that no cycle in $\Lambda$ has an entrance. For each cycle $\lambda
\in \Lambda$ let $I_\lambda := \{i \le k : d(\lambda)_i \not= 0\}$, and fix a cycle $\rho$ in
$\Lambda$ such that $I_\rho$ is maximal with respect to set inclusion amongst the sets $I_\lambda$.
We claim that if $\mu \in r(\rho)\Lambda$ and $m < n \le d(\mu)$, then $\mu(m) \not= \mu(n)$. To
see this, suppose for contradiction that $\mu(m) = \mu(n)$. By Lemma~\ref{lem:unique entrance}
there exist $p \in \NN \setminus\{0\}$ and a cycle $\tau$ of degree $p d(\rho)$ with $r(\tau) =
\mu(m)$. Since $d(\mu) \wedge d(\rho) = 0$, we have $(n-m) \wedge d(\rho) = 0$, so $\tau \mu(m,n)$
is a cycle with $I_{\tau \mu(m,n)} = I_\tau \sqcup I_{\mu(m,n)} \supsetneq I_\tau = I_\rho$,
contradicting our choice of $\rho$.

Since $\Lambda^0$ is finite, it follows that there exists $\mu \in r(\rho)\Lambda$ such that
$d(\mu) \wedge d(\rho) = 0$ and such that $s(\mu)\Lambda^{e_i} = \emptyset$ whenever $e_i \wedge
d(\rho) = 0$. Another application of Lemma~\ref{lem:unique entrance} implies that there exists $p
\in \NN^k$ and a cycle $\tau \in s(\mu)\Lambda^{p d(\rho)}$. Since $r(\tau)\Lambda^{e_i} =
\emptyset$, the graph morphism $\tau^\infty$ belongs to $\Lambda^{\le \infty}$, and since cycles in
$\Lambda$ have no entrance, $s(\mu)\Lambda^{\le\infty} = \{\tau^\infty\}$. In particular,
$\sigma^{d(\tau)}(x) = x$ for all $x \in s(\mu)\Lambda^{\le\infty}$, which contradicts that
$\Lambda$ has no local periodicity.
\end{proof}

We conclude the section with the following description of the $C^*$-algebras of row-finite locally
convex $k$-graphs with finitely many vertices: each such $C^*$-algebra either contains an infinite
projection or is strongly Morita equivalent (denoted $\Me$) to a direct sum of matrix algebras over
the continuous functions on tori of dimension at most $k$ (with the convention that a dimension
zero torus is a point). To prove the result, we need some terminology. Let $\Lambda$ be a
row-finite locally convex $k$-graph such that $|\Lambda^0|$ is finite, and suppose that
$C^*(\Lambda)$ does not contain an infinite projection. We will call paths $\mu$ such that $r(\mu)
= s(\mu)$ and $r(\mu) \Lambda^{e_i} = \emptyset$ whenever $d(\mu)_i = 0$ \emph{initial cycles}, and
we will say that a vertex $v \in \Lambda^0$ is a vertex on the initial cycle $\mu$ if $v \in
(\mu^\infty)^0 := \{\mu^{\infty}(n) : n \le d(\mu^\infty)\}$. We write $\IC(\Lambda)$ for the
collection of initial cycles in $\Lambda$, and $\IC(\Lambda)^0$ for the collection of vertices of
$\Lambda$ which lie on an initial cycle.

\begin{lemma}\label{lem:G_mu}
Let $\Lambda$ be a row-finite locally convex $k$-graph such that $|\Lambda^0|$ is finite, and
suppose that $C^*(\Lambda)$ does not contain an infinite projection. Let $\mu$ be an initial cycle
of $\Lambda$. Let $G_\mu := \{m - n : m,n \le d(\mu^\infty), \mu^\infty(m) = \mu^{\infty}(n) \}$.
Then $G_\mu$ is a subgroup of $\ZZ^k$.
\end{lemma}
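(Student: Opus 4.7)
The plan is to verify the three subgroup axioms. Identity: $0 = 0 - 0 \in G_\mu$, taking $m = n = 0$. Inverses: if $g = m - n \in G_\mu$, then exchanging the roles of $m$ and $n$ gives $-g \in G_\mu$. The substantive content is closure under addition.

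The key reduction is to show that whenever $\mu^\infty(m) = \mu^\infty(n)$, in fact $\sigma^m(\mu^\infty) = \sigma^n(\mu^\infty)$. To see this, let $v = \mu^\infty(m) = \mu^\infty(n)$ and consider the one-period segments
\[
\mu_m := \mu^\infty(m,\, m + d(\mu)) \qquad \text{and} \qquad \mu_n := \mu^\infty(n,\, n + d(\mu)).
\]
Periodicity of $\mu^\infty$ gives $\mu^\infty(m + d(\mu)) = \mu^\infty(m) = v$, so $\mu_m$ and $\mu_n$ are cycles at $v$ of degree $d(\mu)$. Since row-finite locally convex $k$-graphs are finitely aligned and $C^*(\Lambda)$ has no infinite projection, Corollary~\ref{cor:inf proj} says that no generalised cycle in $\Lambda$ has an entrance; in particular, $\mu_m$ does not. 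Hence $\MCE(\mu_n, \mu_m) \ne \emptyset$, and because $d(\mu_m) = d(\mu_n) = d(\mu)$, any minimal common extension has degree exactly $d(\mu) \vee d(\mu) = d(\mu)$, forcing it to equal both $\mu_m$ and $\mu_n$. Thus $\mu_m = \mu_n$. Since $\sigma^m(\mu^\infty)$ is $d(\mu)$-periodic (as $\sigma^{d(\mu)}\sigma^m = \sigma^m \sigma^{d(\mu)}$ and $\sigma^{d(\mu)}(\mu^\infty) = \mu^\infty$) with initial segment of degree $d(\mu)$ equal to $\mu_m$, the unique infinite periodic extension yields $\sigma^m(\mu^\infty) = \mu_m^\infty = \mu_n^\infty = \sigma^n(\mu^\infty)$.

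Closure under addition now follows by chaining two shifts. Given $g_i = m_i - n_i \in G_\mu$ for $i = 1,2$, we have $\sigma^{m_i}(\mu^\infty) = \sigma^{n_i}(\mu^\infty)$. Applying $\sigma^{m_2}$ to the $i=1$ equation and $\sigma^{n_1}$ to the $i=2$ equation and using $\sigma^a \circ \sigma^b = \sigma^{a+b}$, we obtain
\[
\sigma^{m_1+m_2}(\mu^\infty) \;=\; \sigma^{n_1+m_2}(\mu^\infty) \;=\; \sigma^{n_1+n_2}(\mu^\infty),
\]
and taking ranges gives $\mu^\infty(m_1+m_2) = \mu^\infty(n_1+n_2)$. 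Well-definedness of these shifts is automatic since $(m_i)_j = (n_i)_j = 0$ in every coordinate $j$ with $d(\mu)_j = 0$, so all sums still lie below $d(\mu^\infty)$. Consequently $g_1 + g_2 = (m_1+m_2) - (n_1+n_2) \in G_\mu$, completing the verification. The main obstacle is the uniqueness-of-cycles step in the previous paragraph, but once the no-entrance property is invoked it reduces to a simple degree-matching in the MCE.
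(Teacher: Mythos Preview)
Your proof is correct and follows the same route as the paper: both verify closure under addition by showing $\mu^\infty(m_1+m_2) = \mu^\infty(n_1+n_2)$ via the shift maps $\sigma^{m_i}$, $\sigma^{n_i}$. You actually go further than the paper by explicitly justifying the implication $\mu^\infty(m) = \mu^\infty(n) \Rightarrow \sigma^m(\mu^\infty) = \sigma^n(\mu^\infty)$ (using that, under the no-infinite-projection hypothesis, cycles have no entrance and hence $\mu_m$ and $\mu_n$ must coincide), whereas the paper invokes this silently in the step $\sigma^m(\sigma^p(\mu^\infty))(0) = \sigma^m(\sigma^q(\mu^\infty))(0)$.
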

\begin{proof}
It is clear that $0 \in G$ and that $-G = G$, so we just need to show that $G$ is closed under
addition. Suppose that $\mu^\infty(m) = \mu^{\infty}(n)$ and that $\mu^\infty(p) = \mu^\infty(q)$,
so that $m-n$ and $p-q$ are elements of $G$; we must show that $(m-n) + (p-q) \in G$. We calculate:
\[
\mu^{\infty}(m+p)
    = \sigma^{m+p}(\mu^\infty)(0)
    = \sigma^m(\sigma^p(\mu^\infty))(0)
    = \sigma^m(\sigma^q(\mu^\infty))(0)
    = \sigma^{m+q}(\mu^\infty)(0).
\]
A symmetric argument shows that $\mu^{\infty}(n+q) = \sigma^{m+q}(\mu^\infty)(0)$ also. Hence
$(m-n) + (p-q) = (m+p) - (n+q) \in G$ as required.
\end{proof}

\begin{prop}
Let $\Lambda$ be a row-finite locally convex $k$-graph such that $|\Lambda^0|$ is finite, and
suppose that $C^*(\Lambda)$ does not contain an infinite projection. Then there exist $n \ge 1$ and
$l_1, \dots, l_n \in \{0, \dots, k\}$ such that $C^*(\Lambda) \Me \bigoplus^n_{i=1} C(\TT^{l_i})$.
\end{prop}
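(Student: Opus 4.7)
The plan is to establish $C^*(\Lambda) \Me \bigoplus^n_{i=1} C(\TT^{l_i})$ in three stages: reduce by Morita equivalence to the sub-$k$-graph supported on $\IC(\Lambda)^0$, decompose the resulting corner into a direct sum indexed by the connected components of $\IC(\Lambda)^0$, and identify each summand up to Morita equivalence as $C(\TT^{l_i})$. If $\Lambda$ contains no conventional cycles, Theorem~\ref{thm:finite k-graphs}(2) already identifies $C^*(\Lambda)$ with a direct sum of full matrix algebras, and the conclusion follows with each $l_i = 0$; so henceforth assume $\Lambda$ contains at least one conventional cycle.

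The hypothesis that $C^*(\Lambda)$ has no infinite projection combined with Corollary~\ref{cor:inf proj} forces every generalised cycle---and in particular every conventional cycle---in $\Lambda$ to be entrance-free. A pigeonhole-on-paths argument using finiteness of $\Lambda^0$, row-finiteness, and the entrance-free property (in the style of Lemma~\ref{lem:unique entrance}) then shows that every vertex of $\Lambda$ admits a path into $\IC(\Lambda)^0$. Hence $p := \sum_{v \in \IC(\Lambda)^0} s_v$ is a full projection of $C^*(\Lambda)$, giving $C^*(\Lambda) \Me pC^*(\Lambda)p$. Next, define an equivalence relation on $\IC(\Lambda)^0$ by declaring $v \sim w$ iff some path in $\Lambda$ connects them; since entrance-free cycles cannot be escaped, no path crosses between distinct equivalence classes, so the class-projections $p_i := \sum_{v \in V_i} s_v$ are central in $pC^*(\Lambda)p$ and yield $pC^*(\Lambda)p \cong \bigoplus^n_{i=1} C^*(\Lambda_i)$, where $\Lambda_i$ is the sub-$k$-graph of $\Lambda$ induced on the class $V_i$.

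It remains to identify each $C^*(\Lambda_i)$ up to Morita equivalence with some $C(\TT^{l_i})$. Fix $i$ and an initial cycle $\mu$ through a vertex of $V_i$, and let $G_i := G_\mu \subseteq \ZZ^k$ be the periodicity group of Lemma~\ref{lem:G_mu}; set $l_i := \rank(G_i) \in \{0, \dots, k\}$, so $G_i \cong \ZZ^{l_i}$. Iterating $\mu$ and using the no-entrance property, one shows that the infinite-path space of $\Lambda_i$ is a single orbit under the $\ZZ^k$-shift with stabilizer $G_i$; accordingly the path groupoid of $\Lambda_i$ is a transitive groupoid on $|V_i|$ units with isotropy $\ZZ^{l_i}$. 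Hence $C^*(\Lambda_i) \cong M_{|V_i|}(\CC) \otimes C^*(\ZZ^{l_i}) \cong M_{|V_i|}(C(\TT^{l_i}))$, which is Morita equivalent to $C(\TT^{l_i})$. Alternatively, one can write down an explicit Cuntz-Krieger $\Lambda_i$-family inside $M_{|V_i|}(C(\TT^{l_i}))$ and invoke the gauge-invariant uniqueness theorem \cite[Corollary~5.6]{Sims2006}. Assembling these Morita equivalences gives the result.

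The principal obstacle is the identification in the third paragraph: verifying that the infinite-path space of $\Lambda_i$ really is a single shift-orbit with isotropy exactly $G_i$, and then correctly analysing the associated path groupoid---or, equivalently, producing by hand an explicit Cuntz-Krieger family in $M_{|V_i|}(C(\TT^{l_i}))$ that implements the desired isomorphism and checking the hypotheses of gauge-invariant uniqueness. By contrast, the Morita reduction to $\IC(\Lambda)^0$ and its splitting into components are essentially bookkeeping once the absence of entrances to cycles is in hand.
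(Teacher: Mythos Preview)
Your proposal is correct and aligns with the paper's proof through the first two stages: both arguments use Corollary~\ref{cor:inf proj} (the paper cites Lemma~\ref{lem:compare projs} directly) to see that cycles have no entrance, both use a pigeonhole argument to show every $\lambda \in \Lambda^{\le \mathbf{N}}$ terminates at a vertex on an initial cycle so that $\sum_{v\in\IC(\Lambda)^0} s_v$ is full, and both split the resulting corner as a direct sum over $\sim$-classes of initial cycles.

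The genuine divergence is in the identification of each summand. The paper cuts down further to the corner $s_{r(\mu)} C^*(\Lambda) s_{r(\mu)}$ at a single vertex on the initial cycle $\mu$, invokes Allen's presentation of such corners \cite[Corollary~3.7]{Allen2008} by generators $t_{\alpha,\beta}$ and relations, and then uses the no-entrance property to collapse relation~(3) and reduce relation~(2) to the group law in $G_\mu$, obtaining $C^*(G_\mu) \cong C(\TT^{l_i})$ directly. Your route instead analyses the path groupoid of $\Lambda_i$: since each vertex on an initial cycle hosts a unique boundary path, the unit space is the finite set $V_i$, the groupoid is transitive, and the isotropy at $\mu^\infty$ is $G_\mu$, giving $C^*(\Lambda_i) \cong M_{|V_i|}\big(C(\TT^{l_i})\big)$. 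Both arguments are valid; the paper's has the advantage of staying entirely within the generators-and-relations framework already established, while yours is more geometric and yields the matrix-size $|V_i|$ explicitly. Your alternative of exhibiting a Cuntz--Krieger family in $M_{|V_i|}(C(\TT^{l_i}))$ and applying gauge-invariant uniqueness is also viable and amounts to a concrete version of the groupoid computation. One small point worth making explicit in your write-up is why $p_i C^*(\Lambda) p_i \cong C^*(\Lambda_i)$: this holds because $V_i$ is hereditary (paths from $V_i$ cannot escape $V_i$ as cycles have no entrance), so the finite exhaustive sets at vertices of $V_i$ agree in $\Lambda$ and in $\Lambda_i$.
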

\begin{proof}
Since $C^*(\Lambda)$ contains no infinite projection, Lemma~\ref{lem:compare projs} implies that no
cycle in $\Lambda$ has an entrance.

For $p \in \NN$, let $\mathbf{p} := (p,p,\dots,p) \in \NN^k$. Let $N := |\Lambda^0|$. Fix $\lambda
\in \Lambda^{\le \mathbf{N}}$. Since $N = |\Lambda^0|$, there exist $p < q \le N$ such that the
vertices $\lambda(\mathbf{p}\wedge d(\lambda))$ and $\lambda(\mathbf{q} \wedge d(\lambda))$
coincide. By \cite[Lemma~3.12 and Lemma~3.6]{RSY2003}, the path $\mu := \lambda(\mathbf{p}\wedge
d(\lambda), \mathbf{q} \wedge d(\lambda))$ belongs to $\Lambda^{\le \mathbf{p - q}}$, so
$r(\mu)\Lambda^{e_i} = \emptyset$ whenever $d(\mu)_i = 0$. Since $\mu$ has no entrance,
$r(\mu)\Lambda^n = \{\mu^{\infty}(0,n)\}$ for all $n \le d(\mu^\infty)$.

By the preceding paragraph, for every $\lambda \in \Lambda^{\le \mathbf{N}}$, we have $s(\lambda)
\in \IC(\Lambda)^0$. By the Cuntz-Krieger relations,
\[
\sum_{\lambda \in \Lambda^{\le \mathbf{N}}} s_\lambda s_{s(\lambda)} s^*_\lambda
    = \sum_{v \in \Lambda^0} \sum_{\lambda \in v\Lambda^{\le \mathbf{N}}} s_\lambda s^*_\lambda
    = \sum_{v \in \Lambda^0} p_v = 1_{C^*(\Lambda)},
\]
so $\sum_{v \in \IC(\Lambda)^0} s_v$ is a full projection in $C^*(\Lambda)$. For each initial cycle
$\mu$, we write $P_\mu$ for $\sum_{v \in (\mu^\infty)^0} s_v$.

Given two initial cycles $\mu, \nu$ either $(\mu^\infty)^0 = (\nu^\infty)^0$, or $(\mu^\infty)^0
\cap (\nu^\infty)^0 = \emptyset$. We write $\mu \sim \nu$ if $(\mu^\infty)^0 = (\nu^\infty)^0$.
Since cycles in $\Lambda$ have no entrance, if $\mu, \nu \in \IC(\Lambda)$, with $\mu \not\sim
\nu$, then $v \Lambda w = \emptyset$ for all $v \in (\mu^\infty)^0$ and $w \in (\nu^\infty)^0$, and
hence $P_\mu C^*(\Lambda) P_\mu \perp P_\nu C^*(\Lambda) P_\nu$. In particular,
\begin{align*}
C^*(\Lambda)
    \Me \Big(\sum_{v \in \IC(\Lambda)^0} s_v\Big) C^*(\Lambda) \Big(\sum_{v \in \IC(\Lambda)^0} s_v\Big)
    &= \sum_{[\mu] \in \IC(\Lambda)/\sim} P_\mu C^*(\Lambda) P_\mu \\
    &= \bigoplus_{[\mu] \in \IC(\Lambda)/\sim} P_\mu C^*(\Lambda) P_\mu.
\end{align*}
It therefore suffices to show that for $\mu \in \IC(\Lambda)$, we have $P_\mu C^*(\Lambda) P_\mu
\Me C(\TT^l)$ for some $l \le k$.

For this, fix $\mu \in \IC(\Lambda)$. For $v \in (\mu^\infty)^0$, we have $v = \mu^{\infty}(m)$ for
some $m$, and then $s_v = s_{\mu(0,m)}^* s_{\mu(0,m)} =  s_{\mu(0,m)}^* s_{r(\mu)} s_{\mu(0,m)}$,
so $s_{r(\mu)}$ is full in $P_\mu C^*(\Lambda) P_\mu$. It therefore suffices to show that
$s_{r(\mu)} C^*(\Lambda) s_{r(\mu)} \cong C(\TT^l)$ for some $l \le k$. By
\cite[Corollary~3.7]{Allen2008}, $s_{r(\mu)} C^*(\Lambda) s_{r(\mu)}$ is isomorphic to the
universal $C^*$-algebra generated by elements $\{t_{\alpha,\beta} : r(\alpha) = r(\beta) = r(\mu),
s(\alpha) = s(\beta)\}$ such that
\begin{enumerate}
\item $t^*_{\alpha,\beta} = t_{\beta,\alpha}$,
\item $t_{\alpha,\beta} t_{\eta,\zeta} = \sum_{(\tau,\rho) \in \Lmin{\beta,\eta}}
    t_{\alpha\tau,\zeta\rho}$, and
\item for every finite exhaustive subset $E$ of $r(\mu)\Lambda$, $\prod_{\lambda \in E}
    (t_{v,v} - t_{\lambda,\lambda}) = 0$.
\end{enumerate}
Since $\mu$ has no entrance, relation~(3) holds if and only if each $t_{\alpha,\alpha} = t_{v,v}$,
and then~(2) implies that $t_{v,v}$ is a unit for the corner, and that each $t_{\alpha,\beta}$ is a
unitary. If $\alpha \in r(\mu)\Lambda$, then $\alpha = \mu^\infty(0,d(\alpha))$. For $m,n \le
d(\mu^\infty)$ with $\mu^\infty(m) = (\mu^\infty)(n)$, let $\alpha = \mu^\infty(0,m - m \wedge n)$
and $\beta = \mu^\infty(0,n - m \wedge n)$. Then~(2) implies that
\[
t_{\alpha,\beta} - t_{\mu^\infty(0,m),\mu^\infty(0,n)}
    = t_{\alpha,\beta} (t_{r(\mu),r(\mu)} - t_{\mu^\infty(0,n),\mu^\infty(0,n)}),
\]
and since $\{\mu^\infty(0,n)\}$ is exhaustive in $r(\mu)\Lambda$, it follows that $t_{\alpha,\beta}
= t_{\mu^\infty(0,m),\mu^\infty(0,n)}$. In particular, if $G_\mu$ is the group obtained from
Lemma~\ref{lem:G_mu}, then there is a well-defined function $(m - n) \mapsto u_{m-n} :=
t_{\mu^\infty(0,m),\mu^\infty(0,n)}$ from $G_\mu$ to $s_{r(\mu)} C^*(\Lambda) s_{r(\mu)}$.

For $\alpha,\beta, \eta, \zeta, \tau$ and $\rho$ as in~(2), we have
\begin{align*}
d(\alpha\tau) - d(\zeta\rho)
    &= \big(d(\alpha) + (d(\beta) \vee d(\zeta)) - d(\beta)\big)
        + \big(d(\zeta) + (d(\beta) \vee d(\zeta)) - d(\zeta)\big)\\
    &= (d(\alpha) - d(\beta)) + (d(\eta) - d(\zeta)),
\end{align*}
and hence for $g,h \in G_\mu$ we have $u_g u_h = u_{g+h}$.

Hence $s_{r(\mu)} C^*(\Lambda) s_{r(\mu)}$ is the universal $C^*$-algebra generated by a unitary
representation of $G_\mu$, namely $C^*(G_\mu)$. Since $G_\mu$ is a subgroup of $\ZZ^k$ it is
isomorphic to $\ZZ^l$ for some $l \le k$, so $C^*(G_\mu) \cong C(\TT^l)$ as required.
\end{proof}

\section{Examples}\label{sec:examples}

In this final section, we present some examples which illustrate our results. We begin with an
example that illustrates the need for the fairly technical definition of a generalised cycle.

Before we discuss it, recall that the $C^*$-algebra of a directed graph is AF if and only if the
graph contains no cycle. There are two obvious generalisations of the notion of a cycle to the
setting of $k$-graphs: paths whose range and source coincide, or periodic infinite paths. Examples
have appeared previously in the literature to show that there exist $k$-graphs containing no path
whose range and source coincide whose $C^*$-algebras are not AF (for example the pullback of
$\Omega_1$ by the homomorphism $(p,q) \mapsto p+q$; see \cite[Example~1.7, Definition~1.9, and
Corollary~2.5(iii)]{KP2000}) and that there exist $k$-graphs in which every infinite path is
aperiodic and the $C^*$-algebra is not AF (see \cite[Examples 6.5~and~6.6]{PRRS2006}). However, to
our knowledge, the following is the first known example of a $k$-graph which contains no
(conventional) cycle and in which every infinite path is aperiodic but such that the $C^*$-algebra
is not AF. This confirms the conjecture stated at the opening of \cite[Section~4.1]{EvansPhD}.

\begin{example}\label{non-AF aper}
Let $\Lambda$ be the $2$-graph with skeleton
\[
\begin{tikzpicture}[scale=1.5]
    \node[circle,inner sep=.5pt] (v0) at (0,0) {$v_1$};
    \node[circle,inner sep=.5pt] (v1) at (2,0) {$v_2$};
    \node[circle,inner sep=.5pt] (v2) at (4,0) {$v_3$};
    \node[circle,inner sep=.5pt] (v3) at (6,0) {$v_4$};
    \node at (7,0) {$\cdots$};
    \draw[-latex] (v1) .. controls +(-1,0.4) .. (v0)
        node[pos=0.5, anchor=south, inner sep=1pt] {$\scriptstyle{\alpha^1_0}$};
    \draw[-latex, dashed] (v1) .. controls +(-1,-0.4) .. (v0)
        node[pos=0.5, anchor=north, inner sep=1pt] {$\scriptstyle{\beta^1_0}$};
    \draw[-latex] (v2) .. controls +(-1,0.4) .. (v1)
        node[pos=0.5, anchor=south, inner sep=1pt] {$\scriptstyle{\alpha^2_0}$};
    \draw[-latex] (v2) .. controls +(-1,0.8) .. (v1)
        node[pos=0.5, anchor=south, inner sep=1pt] {$\scriptstyle{\alpha^2_1}$};
    \draw[-latex, dashed] (v2) .. controls +(-1,-0.4) .. (v1)
        node[pos=0.5, anchor=north, inner sep=1pt] {$\scriptstyle{\beta^2_0}$};
    \draw[-latex, dashed] (v2) .. controls +(-1,-0.8) .. (v1)
        node[pos=0.5, anchor=north, inner sep=1pt] {$\scriptstyle{\beta^2_1}$};
    \draw[-latex] (v3) .. controls +(-1,0.4) .. (v2)
        node[pos=0.5, anchor=south, inner sep=1pt] {$\scriptstyle{\alpha^3_0}$};
    \draw[-latex] (v3) .. controls +(-1,0.8) .. (v2)
        node[pos=0.5, anchor=south, inner sep=1pt] {$\scriptstyle{\alpha^3_1}$};
    \draw[-latex] (v3) .. controls +(-1,1.2) .. (v2)
        node[pos=0.5, anchor=south, inner sep=1pt] {$\scriptstyle{\alpha^3_2}$};
    \draw[-latex, dashed] (v3) .. controls +(-1,-0.4) .. (v2)
        node[pos=0.5, anchor=north, inner sep=1pt] {$\scriptstyle{\beta^3_0}$};
    \draw[-latex, dashed] (v3) .. controls +(-1,-0.8) .. (v2)
        node[pos=0.5, anchor=north, inner sep=1pt] {$\scriptstyle{\beta^3_1}$};
    \draw[-latex, dashed] (v3) .. controls +(-1,-1.2) .. (v2)
        node[pos=0.5, anchor=north, inner sep=1pt] {$\scriptstyle{\beta^3_2}$};
\end{tikzpicture}
\]
and factorisation rules $\alpha^i_j \beta ^{i+1}_k \sim \beta^i_{j+1 (\mod i)} \alpha^{i+1}_{k+1
(\mod i+1)}$. Wright's argument \cite{pp_Wright2011} shows that $\Lambda$ is aperiodic in the sense
of \cite{LS2010}, meaning that every vertex receives at least one aperiodic infinite path. However,
we claim that it has the stronger property that every infinite path in $\Lambda$ is aperiodic. (For
$1$-graphs this is equivalent to requiring that the graph contains no cycles; it is also equivalent
to the condition that the associated groupoid is principal.)

To see this, fix $x \in \Lambda^{\le\infty}$, say $r(x) = v_{i-1}$, and factorise $x$ as
\[
x = \alpha^i_{j_0} \beta^{i+1}_{j_1} \alpha^{i+2}_{j_2}
\beta^{i+3}_{j_3} \dots
\]

Then
\begin{align*}
\sigma^{e_1}(x)
    &= \beta^{i+1}_{j_1} \alpha^{i+2}_{j_2} \beta^{i+3}_{j_3} \alpha^{i+4}_{j_4} \dots \\
    &= \alpha^{i+1}_{j_1 - 1 (\mod i+1)} \beta^{i+2}_{j_2 - 1 (\mod i+2)}
        \alpha^{i+3}_{j_3 - 1 (\mod i+3)} \beta^{i+4}_{j_4 - 1 (\mod i+4)} \dots
\end{align*}
and
\begin{align*}
\sigma^{e_2}(x)
    &= \sigma^{e_2}(\beta^i_{j_0 + 1 (\mod i)} \alpha^{i+1}_{j_1 + 1 (\mod i+1)}
        \beta^{i+2}_{j_2 + 1 (\mod i+2)} \alpha^{i+3}_{j_3 + 1 (\mod i+3)}
        \beta^{i+4}_{j_4 + 1 (\mod i+4)}) \dots \\
    &= \alpha^{i+1}_{j_1 + 1 (\mod i+1)} \beta^{i+2}_{j_2 + 1 (\mod i+2)}
        \alpha^{i+3}_{j_3 + 1 (\mod i+3)} \beta^{i+4}_{j_4 + 1 (\mod i+4)}\dots
\end{align*}
Hence for $m,n \in \NN$,
\begin{align*}
\sigma^{(m,n)}(x) =
    {}&\alpha^{i+m+n}_{j_{m+n} + (n - m) (\mod i+m+n)}\\
      &\qquad \beta^{i+m+n+1}_{j_{m+n+1} + (n - m)(\mod i+m+n+1)} \\
      &\qquad\qquad \alpha^{i+m+n+2}_{j_{m+n+2} + (n - m) (\mod i+m+n+2)} \\
      &\qquad\qquad\qquad \beta^{i+m+n+3}_{j_{m+n+3} + (n - m) (\mod i+m+n+3)} \dots
\end{align*}

So for $p \in \NN^2$, we can recover $p$ from $y:=\sigma^{p}(x)$ as follows:
\begin{itemize}
\item if $r(x) = v_l$ and $r(\sigma^p(x)) = v_{l'}$, then $p_1 + p_2 = l' - l$.
\item for $i \ge 0$, we have $y((i,i), (i+1,i)) = \alpha^{r(y)+2i}_{k(i)}$ for some $k(i) \in
    \ZZ/(r(y) + 2i)\ZZ$. Moreover, $p_2 - p_1 \equiv k(i) - j_{r(y) + 2i} (\mod r(y) + 2i)$ for
    all $i$. Hence the sequence $d_i := k(i) - (j_{r(y) + 2i} \in \ZZ)$ is either constant or
    else increases by $2$ at each step. We have $-(r(y) + 2i) < p_2 - p_1 < r(y) + 2i$ for $i >
    0$, so if $(d_i)$ is constant then $p_2 \ge p_1$ and $p_2 - p_1 = d_i$ for $i \ge 1$, and
    if $d_{i+1} = d_i + 2$ for all $i$, then $p_2 < p_1$ and $p_2 - p_1 = d_i - (r(y) + 2i)$
    for $i \ge 1$.
\item We now know $p_1 + p_2$ and $p_2 - p_1$; we then have $p_2 = \frac{(p_1 + p_2) + (p_2 -
    p_1)}{2}$, and then $p_1 = p_1 + p_2 - p_2$.
\end{itemize}

In particular, if $\sigma^p(x) = \sigma^q(x)$, then $p = q$ by the above, and it follows that $x$
is not periodic. Hence $\Lambda$ has no periodic boundary paths. It also has no cycles. However,
$(\alpha^1_0, \beta^1_0)$ is a generalised cycle, so $C^*(\Lambda)$ is not AF. Since $g(v_n) :=
1/(n-1)!$ defines a finite faithful graph trace on $\Lambda$, Lemma~\ref{lem:graph traces
revisited} implies that $C^*(\Lambda)$ carries a faithful trace, and hence is finite.
\end{example}

\begin{rmk}
The $2$-graph of the preceding example contains a generalised cycle, so we were able to use
Theorem~\ref{thm:main necessary} to see that its $C^*$-algebra was not AF. We believe that it is
possible to construct a similar example which contains no generalised cycle and no periodic paths
whose $C^*$-algebra is simple and finite but not AF, using Proposition~\ref{prp:arbitrary partial
unitary} in place of Theorem~\ref{thm:main necessary}.
\end{rmk}

Our second example demonstrates a $2$-graph which contains no generalised cycle, but so that a
quotient graph does contain such a cycle. In particular Corollary~\ref{cor:GCs in quotients} is a
genuinely stronger result than Theorem~\ref{thm:main necessary}.

\begin{example}\label{eg:spine}
Consider the unique $2$-graph $\mathcal{S}$ with the skeleton illustrated in
Figure~\ref{fig:spine}.
\begin{figure}[ht]
\[
\begin{tikzpicture}
    \filldraw[black!15!white] (-5.5,-1.5)--(-1,-1.5)--(-1,7.5)--(-5.5,7.5)--cycle;
    \foreach \x in {-4,-2,0,2,4} {
        \foreach \y in {0,2,4,6} {
            \node[circle, inner sep=1.5pt, fill=black] (v\x\y) at (\x,\y) {};
            \ifnum \y > 0 {
                \ifnum \x = 0 {
                    \draw[-latex] (v0\y) .. controls +(0.25,-.95) .. +(0,-1.9)
                        \ifnum \y=2 node[pos=0.5, anchor=west, inner sep=1pt]  {$\alpha$}\fi;
                    \draw[-latex, dashed] (v0\y) .. controls +(-0.25,-.95) .. +(0,-1.9)
                        \ifnum \y=2 node[pos=0.5, anchor=east, inner sep=1pt] {$\beta$}\fi;
                } \else {
                    \ifnum \x > 0 {
                        \draw[-latex, dashed] (v\x\y)--+(0,-1.9);
                    } \else {
                        \draw[-latex] (v\x\y)--+(0,-1.9);
                    }\fi
                } \fi
            } \fi
            \ifnum \x < 0 {
                \draw[-latex, dashed] (v\x\y)--+(1.9,0);
            } \fi
            \ifnum \x > 0 {
                \draw[-latex] (v\x\y)--+(-1.9,0);
            } \fi
        }
        \node at (\x,-1) {$\vdots$};
        \node at (\x,7) {$\vdots$};
    }
    \foreach \y in {0,2,4,6} {
        \node at (-5,\y) {$\dots$};
        \node at (5,\y) {$\dots$};
    }
\end{tikzpicture}
\]
\caption{The skeleton of the $2$-graph $\mathcal{S}$.}\label{fig:spine}
\end{figure}
It is straightforward to check that this $2$-graph contains no generalised cycles. However, the
collection $H$ of vertices to the left of the middle (those contained in the grey rectangle) form a
saturated hereditary subset of $\Lambda^0$, and the quotient graph does contain a generalised
cycle, namely $(\alpha,\beta)$.
\end{example}

We now present two examples of $2$-graphs with the same skeleton, one of them AF, the other not
obviously so. The AF example is a $2$-graph which satisfies Condition~$(\Gamma)$ of
\cite[Definition~4.6]{EvansPhD} but not Condition~(S) \cite[Definition~4.3]{EvansPhD}, confirming a
conjecture of the first author. The other example is intriguing, because it strongly suggests that
there are $2$-graph $C^*$-algebras $C^*(\Lambda)$ which are AF but whose canonical diagonal
subalgebras (in the sense of Kumjian) as AF algebras are not conjugate to their maximal abelian
subalgebras $\clsp\{s_\lambda s^*_\lambda : \lambda \in \Lambda\}$. (By contrast, whenever the
$C^*$-algebra of a directed graph $E$ is AF, it has an AF decomposition for which the canonical
diagonal subalgebra is precisely $\clsp\{s_\lambda s^*_\lambda : \lambda \in E^*\}$, where $E^*$ is
the finite path space of $E$.)

\begin{example}\label{eg:PAF}
Consider the skeleton
\[
    \begin{tikzpicture}[rotate=90]
    \node[inner sep = 0.5pt] (v) at (0,0) {$v$};
    \draw[-latex] (v) .. controls (0.5,0.7) and (0.7,1) .. (0,1)
        node[pos=1,anchor=west,inner sep=1.5pt] {\small$e_1$}
        .. controls (-0.7,1) and (-0.5,0.7) .. (v);
    \draw[-latex] (v) .. controls (0.8,0.8) and (1,1.25) .. (0,1.25)
        node[pos=1,anchor=east,inner sep=1.5pt] {\small$e_2$}
        .. controls (-1,1.25) and (-0.8,0.8) .. (v);
    \draw[-latex, dashed] (v) .. controls (0.5,-0.7) and (0.7,-1) .. (0,-1)
        node[pos=1,anchor=east,inner sep=1.5pt] {\small$f_1$}
        .. controls (-0.7,-1) and (-0.5,-0.7) .. (v);
    \draw[-latex, dashed] (v) .. controls (0.8,-0.8) and (1,-1.25) .. (0,-1.25)
        node[pos=1,anchor=west,inner sep=1.5pt] {\small$f_2$}
        .. controls (-1,-1.25) and (-0.8,-0.8) .. (v);
    \end{tikzpicture}
\]

Let $\PAF$ be the $2$-graph with this skeleton and factorisation rules $e_i f_j = f_i e_j$. By
\cite[Corollary~3.5(iii)]{KP2000}, $C^*(\PAF) \cong \Oo_2 \otimes C(\TT)$.

Let $c(e_1) = c(f_2) := (0,1)$, $c(e_2) := (1,1)$ and $c(f_1) := (-1,1)$. It is straightforward to
check that $c$ extends to a functor on $\PAF$. The skew-product graph $\LAF := \PAF \rtimes_c
\ZZ^2$ has the skeleton illustrated in Figure~\ref{fig:LAF/LNAF}.
\begin{figure}[ht]
\[
\begin{tikzpicture}[scale=1.75]
    \foreach \x in {-3,-2,-1,0,1,2,3} {
        \foreach \y in {-1,0,1,2} {
            \node[inner sep = 1pt] (\x\y) at (\x,\y) {\small$v_{(\x,\y)}$};
        }
        \node at (\x,2.4) {$\vdots$};
        \node at (\x,-1.25) {$\vdots$};
    }
    \foreach \y in {-1,0,1,2} {
        \node at (-3.5,\y) {\dots};
        \node at (3.5,\y) {\dots};
    }
    \foreach \x/\xx in {-3/-2,-2/-1,-1/0,0/1,1/2,2/3} {
        \foreach \y/\yy in {0/-1,1/0,2/1} {
            \draw[-latex, dashed] (\x\y)--(\xx\yy);
        }
    }
    \foreach \x in {-3,-2,-1,0,1,2,3} {
        \foreach \y/\yy in {0/-1,1/0,2/1} {
        \draw[-latex] (\x\y) .. controls +(0.15,-0.5) .. (\x\yy);
        \draw[-latex, dashed] (\x\y) .. controls +(-0.15,-0.5) .. (\x\yy);
        }
    }
    \foreach \x/\xx in {-2/-3,-1/-2,0/-1,1/0,2/1,3/2} {
        \foreach \y/\yy in {0/-1,1/0,2/1} {
            \draw[-latex] (\x\y)--(\xx\yy);
                node[pos=0.5, anchor=north west, inner sep=1pt, label=e\xx\yy2mid] {};
        }
    }
\end{tikzpicture}
\]
\caption{The common skeleton of the $2$-graphs $\LAF$ and $\LNAF$.}\label{fig:LAF/LNAF}
\end{figure}
To keep notation compact, we write $e^{i,j}_l$ for $(e_l, (i,j))$ and $f^{i,j}_l$ for $(f_l,
(i,j))$ for all $l \in \{0,1\}$ and $i,j \in \ZZ$. So locally, the labelling looks like
\[
\begin{tikzpicture}[scale=1.75]
    \node[inner sep = 1pt] (o) at (0,0) {\small$v_{(i,j)}$};
    \node[inner sep = 1pt] (nw) at (-1,1) {\small$v_{(i-1,j+1)}$};
    \node[inner sep = 1pt] (n) at (0,1) {\small$v_{(i,j+1)}$};
    \node[inner sep = 1pt] (ne) at (1,1) {\small$v_{(i+1,j+1)}$};
    \draw[-latex, dashed] (nw)--(o)
        node[pos=0.5, anchor=north east, inner sep=1pt] {\small$f^{i,j}_1$};
    \draw[-latex] (n) .. controls +(0.15,-0.5) .. (o)
        node[pos=0.3, anchor=west, inner sep=1pt] {\small$e^{i,j}_1$};
    \draw[-latex, dashed] (n) .. controls +(-0.15,-0.5) .. (o)
        node[pos=0.3, anchor=east, inner sep=1pt] {\small$f^{i,j}_2$};
    \draw[-latex] (ne)--(o)
        node[pos=0.5, anchor=north west, inner sep=1pt] {\small$e^{i,j}_2$};
\end{tikzpicture}
\]
The factorisation rules are
\begin{align*}
e^{i,j}_1f^{i,j+1}_2 &= f^{i,j}_1e^{i-1,j+1}_2, &&& e^{i,j}_2f^{i+1,j+1}_1 &= f^{i,j}_2e^{i,j+1}_1,\\
e^{i,j}_1 f^{i, j+1}_1 &= f^{i,j}_1 e^{i-1,j+1}_1, &&& e^{i,j}_2f^{i+1,j+1}_2 &= f^{i,j}_2 e^{i, j+1}_2.
\end{align*}
We claim that $C^*(\LAF)$ is strongly Morita equivalent to the UHF algebra of type $2^\infty$. To
see this, we invoke Corollary~\ref{cor:skew-pullback-structure}. Let $B_2$ be the $1$-graph with
$B_2^0 = \{v\}$ and $B_2^1 = \{a,b\}$ whose $C^*$-algebra is canonically isomorphic to $\Oo_2$.
Then $e_1 \to (a,(1,0))$, $e_2 \to (b, (1,0))$, $f_1 \to (a,(0,1))$ and $f_2 \to (b, (0,1))$
determines an isomorphism of $\PAF$ with the pullback $f^*(B_2)$ under the homomorphism $f : (m,n)
\to m+n$ from $\NN^2$ to $\NN$. The $2$-graph $\LAF$ is isomorphic to the one obtained from
Example~\ref{eg:skew-pullback} by setting $c_0(a) = e_1$ and $c_0(b) = 0$ in $\NN$. Since $\LAF$ is
cofinal, it follows from Corollary~\ref{cor:skew-pullback-structure} that $C^*(\LAF)$ is strongly
Morita equivalent to $s_v C^*(B_2)^\gamma s_v$. The fixed-point algebra $C^*(B_2)^\gamma$ is
precisely the classical core of $\Oo_2$, which is the $2^\infty$ UHF algebra \cite[1.5]{Cuntz1977}.
\end{example}

\smallskip

\begin{example}\label{eg:PNAF}
Consider the $2$-graph $\PNAF$ with the same skeleton as $\PAF$ but with factorisation rules $e_i
f_j = f_j e_i$. This is isomorphic to $B_2 \times B_2$, so $C^*(\PNAF) \cong \Oo_2 \otimes \Oo_2$
as in \cite[Corollary~3.5(iv)]{KP2000}. The formula given for $c$ in Example~\ref{eg:PAF} also
extends to a functor on $\PNAF$, and we write $\LNAF$ for the corresponding skew-product graph.
Then $\LNAF$ has the same skeleton as $\LAF$, but factorisation rules
\begin{align*}
e^{i,j}_1f^{i,j+1}_2 &= f^{i,j}_2 e^{i,j+1}_1, &&& e^{i,j}_2f^{(i+1),(j+1)}_1 &= f^{i,j}_1e^{i-1,j+1}_2,\\
e^{i,j}_1 f^{i, j+1}_1 &= f^{i,j}_1 e^{i-1,j+1}_1, &&& e^{i,j}_2f^{i+1,j+1}_2 &= f^{i,j}_2 e^{i, j+1}_2.
\end{align*}
For each $i,j$, let $x^{i,j}$ denote the unique infinite path $x^{i,j} : \Omega_2 \to \LNAF$ such
that
\[
x^{i,j}(n, n+(1,0)) = e^{i,(j+|n|)}_1
    \qquad\text{ and }\qquad
x^{i,j}(n, n+(0,1)) = f^{i,(j+|n|)}_2
\]
for all $n \in \NN^2$. Then $\sigma^{(1,0)}(x^{i,j}) = x^{i,(j+1)} = \sigma^{(0,1)}(x^{i,j})$ for
all $i,j$, and in particular every vertex of $\LNAF$ receives a periodic infinite path. On the
other hand, for each $i,j$ there is a unique infinite path $y^{i,j} : \Omega_2 \to \LNAF$ defined
by
\[
y^{i,j}(n, n+(1,0)) := e^{(i + n_1 - n_2), (j+|n|)}_2
    \qquad\text{ and }\qquad
y^{i,j}(n, n+(0,1)) := f^{(i + n_1 - n_2), (j+|n|)}_1
\]
for all $n \in \NN^2$. Since each $y^{i,j}$ is injective from $\Omega_2^0 \to \LNAF^0$ it is
aperiodic. So $\LNAF$ satisfies the aperiodicity condition. It is also cofinal, so $C^*(\LNAF)$ is
simple by \cite[Proposition~4.8]{KP2000}.
\end{example}

\subsection{The \texorpdfstring{$C^*$}{C*}-algebra of \texorpdfstring{$\LNAF$}{Lambda II}}
We will spend some time analysing $C^*(\LNAF)$. We believe that it is isomorphic to $C^*(\LAF)$,
but via an isomorphism which cannot easily be described in terms of the presentation of each as a
$k$-graph $C^*$-algebra. As supporting evidence for this conjecture, setting $p := s_{v_{(0,0)}}$,
we prove: that $p C^*(\LNAF) p$ has a unique tracial state; that $C^*(\LNAF)$ (and thus $p
C^*(\LNAF) p$) is AF-embeddable; that the $K$-theory of both $p C^*(\LNAF) p$ and $C^*(\LNAF)$ is
$(\ZZ[\frac{1}{2}], \{0\})$ (as groups); that Murray-von Neumann equivalence in $p C^*(\LNAF)p$ of
the canonical representatives of the generators of its $K_0$-group is equivalent to $K_0$
equivalence characterised by equality under the trace;  and that the order on its $K_0$-group is
the standard unperforated order. So all the evidence suggests that $C^*(\LNAF)$ is strongly Morita
equivalent to the $2^\infty$ UHF algebra, and hence also to $C^*(\LAF)$. To indicate why this might
be surprising, we close by showing that if $p C^*(\LNAF) p$ is indeed the $2^\infty$ UHF algebra,
then its diagonal subalgebra as an AF algebra is not conjugate to the canonical maximal abelian
subalgebra $\clsp\{s_\lambda s^*_\lambda : \lambda \in v_{(0,0)} \LNAF\}$, even though the two
subalgebras are canonically isomorphic under an isomorphism which preserves $K_0$-classes in the
enveloping algebras.

Recall that a \emph{normalised trace} on $C^*(\LNAF)$ is a trace such that $\sum_{v \in F}
\tau(s_v)$ converges to $1$ as $F$ increases over finite subsets of $\LNAF^0$ and that for a
hereditary subset $H$ of $\LNAF^0$, we may identify $C^*(H\LNAF)$ with the subalgebra of
$C^*(\LNAF)$ generated by $\{ s_\lambda : \lambda \in H\LNAF \}$.

\begin{lemma}\label{lem:PNAF unique trace}
Let $H := \{v_{(i,j)} : j \ge 0, |i| \le j\}$ be the hereditary subset of $\LNAF^0$ generated by
$v$. Let $T := \sum^\infty_{j=1} (2j-1)2^{1-j}$. There is a normalised trace $\tau$ on
$C^*(H\LNAF)$ given by $\tau(s_{v_{(i,j)}}) = \frac{1}{T}2^{i-j}$, and $\tau(s_\mu s^*_\nu) =
\delta_{\mu,\nu} \tau(s_{s(\mu)})$. Moreover, this is the unique normalised trace on $C^*(H\LNAF)$.
\end{lemma}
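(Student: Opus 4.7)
The plan is to split the proof into existence and uniqueness. For existence, I first note that within $H$ every vertex $v_{(i,j)}$ has exactly two $e_1$-edges (with sources $v_{(i,j+1)}$ and $v_{(i+1,j+1)}$) and two $e_2$-edges (with sources $v_{(i-1,j+1)}$ and $v_{(i,j+1)}$), all lying in $H$, so the sub-$2$-graph $H\LNAF$ is row-finite and has no sources. Since the displayed formula for $g(v_{(i,j)}) := \tau(s_{v_{(i,j)}})$ depends only on $j$, both colour-$e_1$ and colour-$e_2$ graph-trace identities at $v_{(i,j)}$ reduce to $g(v_{(i,j)}) = 2 g(v_{(i,j+1)})$, which is satisfied by the geometric decay in $j$. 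The constant $1/T$ is exactly what makes $\sum_{v \in H} g(v) = 1$. Lemma~\ref{lem:graph traces revisited} applied to $H\LNAF$ then delivers a normalised faithful gauge-invariant trace $\tau_g$ on $C^*(H\LNAF)$ satisfying $\tau_g(s_\mu s^*_\nu) = \delta_{\mu,\nu} g(s(\mu))$.

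For uniqueness, let $\tau'$ be any normalised trace on $C^*(H\LNAF)$ and set $g'(v) := \tau'(s_v)$. Using $s_v = \sum_{\lambda \in v H\LNAF^n} s_\lambda s^*_\lambda$ from (CK4''), together with $\tau'(s_\lambda s^*_\lambda) = \tau'(s^*_\lambda s_\lambda) = g'(s(\lambda))$, one sees that $g'$ is a normalised graph trace on $H$. Subtracting the $e_1$- and $e_2$-equations at $v_{(i,j)}$ yields $g'(v_{(i+1,j+1)}) = g'(v_{(i-1,j+1)})$; iterating along row $j+1$ shows that $g'$ takes at most two values on each row, one on each parity class. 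The $e_1$-equation at $v_{(i,j)}$ then expresses $g'(v_{(i,j)})$ as the sum of one even-parity and one odd-parity value from row $j+1$, so $g'$ is in fact constant along every row. The recursion $g'(v_{(i,j)}) = 2 g'(v_{(i,j+1)})$ gives $g'(v_{(i,j)}) = c \cdot 2^{-j}$, and normalisation pins down $c = 1/T$; hence $g' = g$ on $H$.

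The main obstacle is to promote the agreement $\tau' = \tau_g$ from vertex projections to all of $C^*(H\LNAF)$. For matrix units $s_\mu s^*_\nu$ with $d(\mu) = d(\nu)$ and $\mu \neq \nu$, the factorisation property together with (CK3) gives $s^*_\nu s_\mu = 0$, and the trace property then forces $\tau'(s_\mu s^*_\nu) = 0$. The delicate case is $d(\mu) \neq d(\nu)$: here I would form the gauge-averaged trace $\overline{\tau'}(a) := \int_{\TT^2} \tau'(\gamma_z(a))\,dz$, which is a normalised gauge-invariant trace on $C^*(H\LNAF)$ and is therefore equal to $\tau_g$ by the uniqueness argument just given. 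The remaining task is to show $\tau' = \overline{\tau'}$, i.e.\ that every normalised trace on $C^*(H\LNAF)$ is automatically gauge-invariant. I would deduce this from the aperiodicity of $\LNAF$ established in Example~\ref{eg:PNAF}, noting that the aperiodic infinite paths $y^{i,j}$ there remain in $H\LNAF$ whenever $v_{(i,j)} \in H$: the boundary-path groupoid of $H\LNAF$ is then topologically principal, so bounded traces on $C^*(H\LNAF)$ correspond to invariant measures on $\partial(H\LNAF)$, and any such measure is determined by its values on cylinder sets $Z(\mu)$ (hence by $g$), forcing $\tau' = \tau_g$.
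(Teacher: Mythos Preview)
Your existence argument and your direct verification that the graph trace is unique are both fine (and in fact more self-contained than the paper's, which instead invokes the Morita equivalence of $C^*(H\LAF)$ with $M_{2^\infty}$ to pin down the graph trace). The real problem is your final step, where you claim that aperiodicity of $H\LNAF$ makes the boundary-path groupoid topologically principal and that therefore bounded traces on $C^*(H\LNAF)$ correspond to invariant measures on $\partial(H\LNAF)$. Aperiodicity in the sense of \cite{LS2010} only gives \emph{topological} principality, not principality: as Example~\ref{eg:PNAF} itself points out, every vertex of $\LNAF$ also receives a \emph{periodic} infinite path $x^{i,j}$, so the groupoid has nontrivial isotropy on a dense set of units (the paper makes this explicit in the discussion following Proposition~\ref{prp:LNAF K-theory}). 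For groupoids with isotropy there is no general bijection between traces and invariant measures; a trace could in principle pick up contributions from elements $s_\mu s^*_\nu$ with $d(\mu)\neq d(\nu)$ supported on the isotropy. So your gauge-averaging argument shows $\overline{\tau'}=\tau_g$, but does not show $\tau'=\overline{\tau'}$.

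The paper confronts exactly this issue by a bare-hands argument. After reducing to $s_{\alpha_0}s^*_{\beta_0}$ with $d(\alpha_0)\wedge d(\beta_0)=0$ and common range and source, one observes that the only such pairs are initial segments of the periodic paths $x^{i,j}$: say $\alpha_0=x^{i,j}(0,Ke_h)$ and $\beta_0=x^{i,j}(0,Ke_l)$. One then uses~(CK3) and the trace property to show $\tau'(s_{\alpha_0}s^*_{\beta_0})=\tau'(s_{\alpha_1}s^*_{\beta_1})$, where $\alpha_1,\beta_1$ is the corresponding pair one level further along $x^{i,j}$; iterating gives a sequence of mutually orthogonal elements $s_{\alpha_n}s^*_{\beta_n}$ (living in distinct corners $s_{v_n}C^*(\LNAF)s_{v_n}$) all with the same trace value $z$. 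Since $\sum_n \tfrac{1}{n}s_{\alpha_n}s^*_{\beta_n}$ converges in $C^*(H\LNAF)$ while $\sum_n \tfrac{z}{n}$ diverges unless $z=0$, continuity of $\tau'$ forces $z=0$. This is the missing idea your proposal needs in place of the invariant-measure claim.
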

\begin{proof}
The function $g : v_{(i,j)} \to \frac{1}{T}2^{1-j}$ determines a normalised finite faithful graph
trace on each of $H\LNAF$ and $H\LAF$. Lemma~\ref{lem:graph traces revisited} implies that there
are faithful normalised traces $\tau^{\mathrm{I\hskip-0.5pt I}}_g : C^*(H\LNAF) \to \CC$ and
$\tau^\mathrm{I}_g : C^*(H\LAF) \to \CC$ satisfying $\tau_g(s_\mu s^*_\nu) = \delta_{\mu,\nu}
g(s(\mu))$ for all $\mu,\nu$. Since $C^*(H\LAF)$ is strongly Morita equivalent to $M_{2^\infty}$,
$\tau^\mathrm{I}_g$ is the unique such trace on $C^*(H\LAF)$, and hence $g$ is the unique
normalised finite graph trace on $H\LAF$. It is then also the unique normalised finite graph trace
on $H\LNAF$, so another application of Lemma~\ref{lem:graph traces revisited} implies that any
trace on $C^*(H\LNAF)$, which is nonzero on each $s_v$ and zero on each $s_\mu s_\nu^*$ such that
$d(\mu) \neq d(\nu)$, must agree with $\tau^{\mathrm{I\hskip-0.5pt I}}_g$.

We claim that $\tau^{\mathrm{I\hskip-0.5pt I}}_g$ is the unique trace on $C^*(H\LNAF)$. To see
this, fix a trace $\tau$ on $C^*(H\LNAF)$. By the above, it suffices to show that $\tau(s_v) \not=
0$ for all $v \in H$ and that $\tau(s_\mu s^*_\nu) = 0$ whenever $d(\mu) \not= d(\nu)$. To see that
$\tau(s_v) \not= 0$ for all $v$, fix $v \in H$. Since $\tau$ is normalised, we have $\tau(s_w)
\not= 0$ for some $w$. Since $\Lambda$ is cofinal, \cite[Remark~A.3]{LS2010} implies that there
exists $n \in \NN^2$ such that $v\Lambda s(\alpha) \not= \emptyset$ for all $\alpha \in
w\Lambda^n$. Since $s_w = \sum_{\alpha \in w\Lambda^n} s_\alpha s^*_\alpha$, there exists $\alpha
\in w\Lambda^n$ such that $\tau(s_\alpha s^*_\alpha) \not= 0$. Fix $\xi \in v\Lambda s(\alpha)$.
Then
\[
\tau(s_v) \ge \tau(s_\xi s^*_\xi) = \tau(s_\xi s^*_\alpha s_\alpha s^*_\xi)
    = \tau(s_\alpha s^*_\xi s_\xi s^*_\alpha) = \tau(s_\alpha s^*_\alpha) \not= 0.
\]

It remains to show that $\tau(s_\mu s^*_\nu) = 0$ when $d(\mu) \not= d(\nu)$. If $s(\mu) \not=
s(\nu)$, this is trivial, and if $r(\mu) \not= r(\nu)$, then the trace property gives $\tau(s_\mu
s^*_\nu) = \tau(s_\nu^* s_\mu) = \tau(s_\nu^* s_{r(\nu)} s_{r(\mu)} s_\mu) = 0$. So we may suppose
that $s(\mu) = s(\nu)$ and $r(\mu) = r(\nu)$. Factorise $\mu = \eta\alpha_0$ and $\nu =
\zeta\beta_0$ where $d(\eta) = d(\zeta) = d(\mu) \wedge d(\nu)$. Then $d(\alpha_0) \wedge
d(\beta_0) = 0$ and
\[
    \tau(s_\mu s^*_\nu)
        = \tau(s_\eta s_{\alpha_0} s^*_{\beta_0} s^*_\zeta)
        = \tau(s^*_\zeta s_\eta s_{\alpha_0} s^*_{\beta_0})
        = \delta_{\eta,\zeta} \tau(s_{\alpha_0} s^*_{\beta_0}).
\]
In particular, it suffices to show that $\tau(s_{\alpha_0} s^*_{\beta_0}) = 0$. If $r(\alpha_0)
\neq r(\beta_0)$ then by the above argument, we are done.  If not then let $K := |\alpha_0|$. Since
$r(\alpha_0) = r(\beta_0)$ and $s(\alpha_0) = s(\beta_0)$, we have $\alpha_0 = x^{i,j}(0, Ke_h)$
and $\beta_0 = x^{i,j}(0, Ke_l)$ for some $i,j \in \ZZ$ and $h,l$ such that $\{h,l\} = \{1,2\}$. By
the Cuntz-Krieger relations and the trace property,
\[
\tau(s_{\alpha_0} s^*_{\beta_0}) = \tau(s^*_{\beta_0} s_{\alpha_0})
    = \sum_{\alpha_0\alpha' = \beta_0\beta' \in \Lambda^{(K,K)}} \tau(s_{\alpha'} s^*_{\beta'}).
\]
Let $\alpha_1 = x^{i, (j+K)}(0, d(\beta_0))$ and $\beta_1 = x^{i, (j+K)}(0, d(\alpha_0))$, then
$\MCE(\alpha_0,\beta_0)=\{\alpha_0\alpha_1\} = \{\beta_0\beta_1\}$ so that
\[
\tau(s_{\alpha_0} s^*_{\beta_0}) = \tau(s_{\alpha_1} s^*_{\beta_1}).
\]
Repeating this, we obtain pairs $\alpha_n, \beta_n$ such that $r(\alpha_n) = r(\beta_n) =
s(\alpha_{n-1}) = s(\beta_{n-1})$ and $d(\alpha_n) = d(\beta_{n-1})$ and vice versa for all $n$,
and such that $\tau(s_{\alpha_n} s^*_{\beta_n}) = \tau(s_{\alpha_m} s^*_{\beta_m})$ for all $m,n$.
Now suppose that $K \not= 0$ so that $\alpha_0 \not= \beta_0$ and let $z := \tau(s_{\alpha_0}
s^*_{\beta_0})$; we must show that $z = 0$. Let $v_n = r(\alpha_n) = r(\beta_n)$ for all $n$. Since
$K \not= 0$, we have $v_m \not= v_n$ for distinct $m,n$. It follows that
\[
    \clsp\{s_{\alpha_n} s^*_{\beta_n}\} =
        \bigoplus_{n=0}^\infty s_{v_n}\big(\clsp\{s_{\alpha_n} s^*_{\beta_n}\}\big)s_{v_n}
        \subseteq \bigoplus^\infty_{n=1} s_{v_n} C^*(\LNAF) s_{v_n}.
\]
Since the $C^*$-norm on a direct sum is the supremum norm, it follows that the series
\[
\sum^\infty_{n=0} \frac{1}{n} s_{\alpha_n} s^*_{\beta_n}
\]
converges to some $S \in C^*(H\LNAF)$. By continuity of $\tau$, we have
\[
\tau(S) = \sum^\infty_{n=0} \tau(\frac{1}{n}s_{\alpha_n} s^*_{\beta_n}) = \sum^\infty_{n=0} \frac{z}{n},
\]
and this forces $z = 0$ since the harmonic series does not converge.
\end{proof}

\begin{cor}\label{cor:unique trace}
Let $\tau$ be the trace on $C^*(\LNAF)$ constructed in Lemma~\ref{lem:PNAF unique trace}. There is
a unique tracial state $\tau_0$ on $pC^*(\LNAF)p$ given by $\tau_0=\frac{1}{\tau(p)}\tau(a)$.  In
particular $pC^*(\LNAF)p$ and $C^*(\LNAF)$ are stably finite.
\end{cor}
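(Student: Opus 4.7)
The plan is to deduce existence and uniqueness of the tracial state on $pC^*(\LNAF)p$ from the uniqueness of the normalised trace $\tau$ on $C^*(H\LNAF)$ (Lemma~\ref{lem:PNAF unique trace}) via a full-corner argument, and then derive stable finiteness from faithfulness of $\tau$ together with simplicity of $C^*(\LNAF)$.

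Existence is immediate: since $\tau(p) = g(v_{(0,0)}) > 0$ and $pC^*(\LNAF)p = pC^*(H\LNAF)p$ (because $H$ is the hereditary subset of $\LNAF^0$ generated by $v_{(0,0)}$, so every path out of $v_{(0,0)}$ stays in $H\LNAF$), the formula $\tau_0(a) := \tau(p)^{-1}\tau(a)$ defines a faithful tracial state on $pC^*(\LNAF)p$.

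For uniqueness the key point is that $p$ is a \emph{full} projection in $C^*(H\LNAF)$: for each $w \in H$, the definition of $H$ supplies $\mu_w \in v_{(0,0)}\LNAF w$, whence $s_w = s_{\mu_w}^* p s_{\mu_w}$ lies in the closed two-sided ideal generated by $p$. Consequently $pC^*(H\LNAF)p$ is a full corner in $C^*(H\LNAF)$, strongly Morita equivalent to it, and any tracial state $\tau_0'$ on $pC^*(\LNAF)p$ lifts to a densely-defined lower-semicontinuous tracial weight $\tilde\tau$ on $C^*(H\LNAF)$ with $\tilde\tau(p)=1$, explicitly via $\tilde\tau(s_w) := \tau_0'(s_{\mu_w}s_{\mu_w}^*)$. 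The trace property of $\tau_0'$ renders this independent of the choice of $\mu_w$, since for any two choices $\mu_w,\nu_w$ the element $s_{\nu_w}s_{\mu_w}^*$ lies in $pC^*(\LNAF)p$ and implements a Murray--von Neumann equivalence there between $s_{\mu_w}s_{\mu_w}^*$ and $s_{\nu_w}s_{\nu_w}^*$. The main technical step is then to show $\tilde\tau$ is \emph{bounded}, so that Lemma~\ref{lem:PNAF unique trace} applies: using the Cuntz--Krieger relation $p = \sum_{\lambda \in v_{(0,0)}\LNAF^n} s_\lambda s_\lambda^*$ together with row-finiteness, one has $\sum_{\lambda \in v_{(0,0)}\LNAF^n} \tilde\tau(s_{s(\lambda)}) = \tau_0'(p) = 1$ for every $n$, and combining this with the explicit combinatorics of $H\LNAF$ (each vertex at level $j$ is the source of a path of degree $n$ with $|n|=j$ starting at $v_{(0,0)}$) bounds $\sum_{w \in H}\tilde\tau(s_w)$ by a convergent series. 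Once boundedness is established Lemma~\ref{lem:PNAF unique trace} gives $\tilde\tau = \tau(p)^{-1}\tau$, and restricting to $pC^*(\LNAF)p$ yields $\tau_0' = \tau_0$. I expect this boundedness verification to be the main obstacle in the argument.

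For stable finiteness, the faithful tracial state $\tau_0$ on the unital $C^*$-algebra $pC^*(\LNAF)p$ gives its stable finiteness at once (any isometry $v$ satisfies $\tau_0(p - vv^*)=0$, so $v$ is unitary, and the same holds in matrices over $pC^*(\LNAF)p$). Since $\LNAF$ is cofinal and aperiodic by Example~\ref{eg:PNAF}, $C^*(\LNAF)$ is simple by \cite[Proposition~4.8]{KP2000}, so $pC^*(\LNAF)p$ is a nonzero hereditary $C^*$-subalgebra of the simple $\sigma$-unital $C^*$-algebra $C^*(\LNAF)$. Brown's theorem then yields $pC^*(\LNAF)p \Me C^*(\LNAF)$, and stable finiteness is preserved under strong Morita equivalence, so $C^*(\LNAF)$ is stably finite as well.
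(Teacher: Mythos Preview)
Your overall strategy is correct, and in fact supplies considerably more detail than the paper, which states the corollary without proof. The existence argument, the identification $pC^*(\LNAF)p = pC^*(H\LNAF)p$, the fullness of $p$ in $C^*(H\LNAF)$, and the stable-finiteness deduction via simplicity and Brown's theorem are all fine.

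The one soft spot is exactly where you flag it: the boundedness of the extended trace $\tilde\tau$. Your sketch (``each vertex at level $j$ is the source of a path \dots\ bounds $\sum_{w}\tilde\tau(s_w)$ by a convergent series'') does not quite work as written: knowing that $\sum_{\lambda \in v_{(0,0)}\LNAF^n} \tilde\tau(s_{s(\lambda)}) = 1$ for each $n$ only tells you that the (multiplicity-weighted) sum over vertices at level $|n|$ equals $1$, so naively you would get $\sum_j 1$, which diverges. The clean fix is to observe that your extension to vertex projections already satisfies the graph-trace equations, and then compute the graph trace on $H\LNAF$ directly: the two Cuntz--Krieger relations at $v_{(i,j)}$ give $g(v_{(i+1,j+1)}) = g(v_{(i-1,j+1)})$ and $g(v_{(i,j)}) = 2g(v_{(i,j+1)})$, so $g(v_{(i,j)}) = g(v_{(0,0)})\,2^{-j}$ is forced. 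With $\tilde\tau(p)=1$ this pins down $\tilde\tau(s_{v_{(i,j)}}) = 2^{-j}$, and then $\sum_{w\in H}\tilde\tau(s_w) = \sum_{j\ge 0}(2j+1)2^{-j} < \infty$, so $\tilde\tau$ is bounded and Lemma~\ref{lem:PNAF unique trace} applies. With this correction the argument goes through.
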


Recall that $C^*(\LNAF)$ is the skew-product of $B_2 \times B_2$ by the $\ZZ^2$-valued functor $c$
satisfying $c(a,v) = c(v,b) = (0,1)$, $c(b,v) = (1,1)$ and $c(v,a) = (-1,1)$. We write $l$ for the
length functor $l(\alpha,\beta) := |\alpha| + |\beta|$ from $B_2 \times B_2$ to $\ZZ$, and we write
$\gamma^l$ for the corresponding induced action satisfying $\gamma^l_z(s_\lambda) =
z^{l(\lambda)}s_\lambda$.

\begin{lemma}\label{lem:fixed-point algebras v2}
The $C^*$-algebra $C^*(B_2 \times B_2)^{\gamma^l}$ is isomorphic to $\bigotimes_\ZZ M_2
\rtimes_\sigma \ZZ$, where $\sigma$ is the (Bernoulli) shift automorphism that translates each
tensor factor one position to the right.
\end{lemma}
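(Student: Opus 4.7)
I would realize $B := C^*(B_2 \times B_2)^{\gamma^l}$ as a crossed product $B_0 \rtimes_{\mathrm{Ad}(u)} \ZZ$ for an explicit unitary $u$, and then identify this with $\bigotimes_\ZZ M_2 \rtimes_\sigma \ZZ$. First, by \cite[Corollary~3.5(iv)]{KP2000} identify $C^*(B_2 \times B_2)$ with $\Oo_2 \otimes \Oo_2$ via $s_{e_i} \leftrightarrow S_i \otimes 1$ and $s_{f_j} \leftrightarrow 1 \otimes T_j$, so that $\gamma^l$ becomes the diagonal gauge action $z \mapsto \gamma_z \otimes \gamma_z$. Since the diagonal copy of $\TT$ in $\TT^2$ fixes $B$ pointwise and $\TT^2$ is abelian, the first-factor gauge $\beta_w := \gamma_{(w,1)}$ restricts to a $\TT$-action on $B$ with $\ZZ$-spectral decomposition $B_k = \Oo_2^{(k)} \otimes \Oo_2^{(-k)}$, where $\Oo_2^{(k)}$ denotes the $k$-th gauge spectral subspace of $\Oo_2$. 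In particular $B_0 = F \otimes F$ with $F := \Oo_2^{(0)} \cong M_{2^\infty}$.

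Next I would exhibit the unitary $u := \sum_{i=1}^{2} S_i \otimes T_i^* \in B_1$. Direct computation gives $u^* u = 1 \otimes (T_1 T_1^* + T_2 T_2^*) = 1$ and $u u^* = (S_1 S_1^* + S_2 S_2^*) \otimes 1 = 1$. Because $u \in B_1$ and $u^* \in B_{-1}$, each $b \in B_k$ satisfies $b u^{-k} \in B_0$, hence $B_k = B_0 u^k$. The universal property of the full crossed product then yields a surjective $*$-homomorphism $\Phi \colon B_0 \rtimes_{\mathrm{Ad}(u)} \ZZ \twoheadrightarrow B$ intertwining the dual $\TT$-action with $\beta$; since $\Phi$ is equivariant and maps the degree-$k$ spectral subspace via the injective map $a \delta_k \mapsto a u^k$, its kernel is trivial on each spectral subspace and hence trivial overall, so $\Phi$ is an isomorphism.

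Finally, I would match $(B_0, \mathrm{Ad}(u))$ with $(\bigotimes_\ZZ M_2, \sigma)$ by an explicit identification. Using the standard Cuntz decomposition $F \cong \bigotimes_{n \ge 1} M_2$ via $S_\mu S_\nu^* \leftrightarrow e^{(1)}_{\mu_1 \nu_1} \otimes \cdots \otimes e^{(|\mu|)}_{\mu_{|\mu|} \nu_{|\mu|}}$ for $|\mu| = |\nu|$ (and likewise for the $T$-copy), identify $F \otimes F$ with $\bigotimes_{n \in \ZZ} M_2$ by placing the $n$-th factor of the left copy of $F$ at $\ZZ$-position $n$ (for $n \ge 1$) and the $n$-th factor of the right copy of $F$ at $\ZZ$-position $1-n$ (for $n \ge 1$). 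A routine Cuntz--Krieger calculation then yields
\[
    \mathrm{Ad}(u)(a \otimes 1) = \Bigl(\sum_i S_i a S_i^*\Bigr) \otimes 1
    \quad\text{and}\quad
    \mathrm{Ad}(u)(1 \otimes T_\mu T_\nu^*) = S_{\mu_1} S_{\nu_1}^* \otimes T_{\mu_2 \cdots \mu_{|\mu|}} T_{\nu_2 \cdots \nu_{|\nu|}}^*,
\]
which are precisely the right Bernoulli shift $\sigma$ on $F \otimes 1$ and $1 \otimes F$ under the identification above. Multiplicativity of both $\mathrm{Ad}(u)$ and $\sigma$ then gives $\mathrm{Ad}(u) = \sigma$ on all of $B_0$.

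The main technical obstacle will be the indexing bookkeeping in the last step, arranging the $\ZZ$-indexing of the factors of $\bigotimes_\ZZ M_2$ so that $\mathrm{Ad}(u)$ becomes precisely the right shift. The conceptual content, however, is concentrated in spotting the unitary $u = \sum_i S_i \otimes T_i^* \in B_1$: once found, the crossed-product structure is forced by the general principle that a $\ZZ$-graded $C^*$-algebra with a unitary in its degree-$1$ subspace is a crossed product by a single automorphism.
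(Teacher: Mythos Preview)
Your proof is correct and follows essentially the same route as the paper's: both identify $C^*(B_2\times B_2)$ with $\Oo_2\otimes\Oo_2$, locate the same unitary (your $u=\sum_i S_i\otimes T_i^*$ is the adjoint of the paper's $U=\sum_i S_i^*\otimes S_i$) in the fixed-point algebra, and use it together with the core $\Ff_2\otimes\Ff_2$ to realise the Bernoulli crossed product. The only real difference is that the paper, having shown $C^*(B_2\times B_2)^{\gamma^l}=C^*(\Ff_2\otimes\Ff_2\cup\{U\})$, simply invokes \cite[Proposition~3.3]{CRS} for the identification with $\bigotimes_\ZZ M_2\rtimes_\sigma\ZZ$, whereas you carry this out by hand: the spectral-subspace argument giving $B\cong B_0\rtimes_{\mathrm{Ad}(u)}\ZZ$ and the explicit check that $\mathrm{Ad}(u)$ matches the right shift under your $\ZZ$-indexing of the tensor factors. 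Your version is thus more self-contained, at the cost of the indexing bookkeeping you flagged; the paper's is shorter but relies on an external reference for the substantive identification.
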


\begin{proof}
Let $S_1$ and $S_2$ be the canonical generators of the Cuntz algebra $\Oo_2$, and let $\Ff_2$ be
the AF core of $\Oo_2$.  We will prove that $C^*(B_2 \times B_2)^{\gamma^l}$ is isomorphic to
$C^*(\Ff_2 \otimes \Ff_2 \cup \{U\}) \subset \Oo_2$, where $U:=S_1^* \otimes S_1 + S_2^* \otimes
S_2$ is unitary.  The Lemma will follow since $C^*(\Ff_2 \otimes \Ff_2 \cup \{U\}) \cong
\bigotimes_\ZZ M_2 \rtimes_\sigma \ZZ$ by \cite[Proposition~3.3]{CRS}.

First, note that $u:=s_{(v,a)}s_{(a,v)}^* + s_{(v,b)}s_{(b,v)}^*$ is a unitary in $C^*(B_2 \times
B_2)$.  We will show that
\[
C^*(B_2 \times B_2)^{\gamma^l} = C^*(C^*(\Lambda)^\gamma \cup \{u\}).
\]
For this, observe that
\[
    C^*(B_2 \times B_2)^{\gamma^l}
        = \clsp\{s_\alpha s^*_\beta : \alpha, \beta \in B_2 \times B_2, l(\alpha) = l(\beta) \}.
\]

Since $d(\alpha) = d(\beta) \implies l(\alpha) = l(\beta)$, and since $\gamma_z^l(u)=u$ for all
$z\in \TT^2$, we have $C^*(C^*(\Lambda)^\gamma \cup \{u\}) \subseteq C^*(B_2 \times
B_2)^{\gamma^l}$.

For the reverse inclusion, since $u$ is unitary, it suffices to show that for each spanning element
$s_\alpha s^*_\beta$ of $C^*(B_2 \times B_2)^{\gamma^l}$, there exists $n \in \ZZ$ such that $u^n
s_\alpha s^*_\beta \in C^*(B_2 \times B_2)^\gamma$. For this, fix $\alpha=(\alpha_1, \alpha_2),
\beta=(\beta_1, \beta_2) \in B_2 \times B_2$ such that $l(\alpha)=l(\beta)$.  For each $z \in
\TT^2$ we have $\gamma_z(u^n s_\alpha s_\beta^*) = ((z^{(-1,1)})^n u)(z^{(n,-n)} s_\alpha
s_\beta^*) = u^n s_\alpha s_\beta^* \in C^*(B_2 \times B_2)^\gamma$, as required, where
$n=|\alpha_1|-|\beta_1|=|\beta_2|-|\alpha_2|$.

By \cite[Corollary~3.5(iv)]{KP2000}, there is an isomorphism $\psi : C^*(B_2 \times B_2) \cong
\Oo_2 \otimes \Oo_2$ satisfying $\psi(s_{(\alpha,\beta)}) = S_\alpha \otimes S_\beta$ (with the
obvious identification of paths and multi-indices). Hence the restriction of $\psi$ to $C^*(B_2
\times B_2)^{\gamma^l}$ is the required isomorphism, since $\psi(u) = U$ and $\psi(C^*(B_2 \times
B_2)^\gamma)=\Ff_2 \otimes \Ff_2$.
\end{proof}

\begin{prop}\label{prp:LNAF embedding}
The $C^*$-algebra $C^*(\LNAF)$ is AF-embeddable.
\end{prop}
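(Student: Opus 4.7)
The plan is to use the skew-product presentation $\LNAF = \PNAF \times_c \ZZ^2$ together with Takai duality to present $C^*(\LNAF)$, up to strong Morita equivalence, as a $\ZZ$-crossed product of an explicit AF-embeddable algebra, and then to invoke Brown's AF-embeddability theorem. Since AF-embeddability is preserved under Morita equivalence and under passage to subalgebras, this suffices.

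First, by the skew-product theorem \cite[Corollary~5.3]{KP2000} applied to the functor $c : \PNAF \to \ZZ^2$, there is an isomorphism
\[
C^*(\LNAF) \;=\; C^*(\PNAF \times_c \ZZ^2) \;\cong\; C^*(\PNAF) \rtimes_{\alpha^c} \TT^2,
\]
where $\alpha^c_z(s_\lambda) = z^{c(\lambda)} s_\lambda$. Decomposing $c = (c_1, l)$ (where $l$ is the length functor), the $\TT^2$-action $\alpha^c$ factors as a pair of commuting $\TT$-actions: the length-gauge $\gamma^l$ on $C^*(\PNAF) \cong \Oo_2 \otimes \Oo_2$, and a secondary action $\beta = \alpha^{c_1}$ that scrambles the $\Oo_2$ generators.

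Second, since $\gamma^l$ is saturated (its nonzero spectral subspaces densely span the corresponding graded components, by the row-finite no-sources structure of $\PNAF$), the inner crossed product $C^*(\PNAF) \rtimes_{\gamma^l} \TT$ is strongly Morita equivalent to its fixed-point algebra $C^*(\PNAF)^{\gamma^l}$, which by Lemma~\ref{lem:fixed-point algebras v2} is isomorphic to $\bigotimes_\ZZ M_2 \rtimes_\sigma \ZZ$. Performing the analogous Takai-type reduction on the remaining $\TT$-crossed product (exchanging the outer compact $\TT$ for its discrete Pontryagin dual $\ZZ$), we arrive at
\[
C^*(\LNAF) \;\Me\; \bigl(\,\textstyle\bigotimes_\ZZ M_2 \rtimes_\sigma \ZZ\,\bigr) \rtimes_{\tilde\beta} \ZZ,
\]
for the induced $\ZZ$-action $\tilde\beta$ coming from $\beta$.

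Third, $\bigotimes_\ZZ M_2$ is AF with a unique tracial state, which is preserved by the Bernoulli shift $\sigma$, so $\bigotimes_\ZZ M_2 \rtimes_\sigma \ZZ$ is AF-embeddable by a classical result (see \cite{PimsnerVoiculescu:JOT80a} and \cite{Brown:JFA98}). Brown's theorem \cite{Brown:JFA98} (or its refinements) then applies to show that the outer $\ZZ$-crossed product $\bigl(\bigotimes_\ZZ M_2 \rtimes_\sigma \ZZ\bigr) \rtimes_{\tilde\beta} \ZZ$ is AF-embeddable, once $\tilde\beta$ is verified to preserve a faithful tracial state. The required trace-preservation follows from the uniqueness of the tracial state in Corollary~\ref{cor:unique trace} combined with the gauge-invariance of that trace.

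The principal obstacle is in step three: one must identify the induced automorphism $\tilde\beta$ explicitly and verify Brown's hypotheses on this concrete $\ZZ$-action — in particular that $\tilde\beta$ (or an appropriate conjugate) fixes a faithful trace and that the crossed product remains quasidiagonal. Tracking the chain of Morita equivalences through two applications of Takai duality without losing hold of the tracial data, and exchanging the order of the two $\TT$-actions so that the inner crossed product is the one handled by Lemma~\ref{lem:fixed-point algebras v2}, is the main technical difficulty.
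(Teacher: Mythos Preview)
Your overall strategy --- pass from $C^*(\LNAF)$ to something built out of $C^*(\PNAF)^{\gamma^l}\cong\bigotimes_\ZZ M_2\rtimes_\sigma\ZZ$ via the skew-product/fixed-point formalism --- is the right idea, and your first step (that $C^*(\LNAF)\cong C^*(\PNAF)\rtimes_{\gamma^c}\TT^2$) is fine. But the argument breaks down at two points.

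First, the ``Takai-type reduction'' in your second paragraph is not a legitimate move as written. Passing from $C^*(\PNAF)\rtimes_{\gamma^l}\TT$ to its Morita-equivalent fixed-point algebra $C^*(\PNAF)^{\gamma^l}$ does not by itself convert the remaining $\TT$-crossed product into a $\ZZ$-crossed product; Takai duality replaces a $\TT$-crossed product by a $\ZZ$-crossed product of the \emph{crossed product}, not of the fixed-point algebra, so ``exchanging the outer compact $\TT$ for its discrete Pontryagin dual $\ZZ$'' has no clear meaning here. You would have to carefully identify the induced action on $C^*(\PNAF)^{\gamma^l}$ and explain why the resulting $\TT$-crossed product is again Morita equivalent to a $\ZZ$-crossed product --- none of which is done.

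Second, even granting the Morita equivalence $C^*(\LNAF)\Me(\bigotimes_\ZZ M_2\rtimes_\sigma\ZZ)\rtimes_{\tilde\beta}\ZZ$, Brown's theorem does not apply: Brown's result in \cite{Brown:JFA98} concerns $\ZZ$-crossed products of \emph{AF} algebras, and $\bigotimes_\ZZ M_2\rtimes_\sigma\ZZ$ is an $A\TT$-algebra which is not AF. Preservation of a faithful trace is not by itself enough to deduce AF-embeddability of a $\ZZ$-crossed product of a non-AF algebra. So the appeal to Brown (``or its refinements'') is a genuine gap, and you yourself flag that verifying the hypotheses on $\tilde\beta$ is the ``principal obstacle''.

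The paper avoids both problems with a much shorter argument. Since the second coordinate of $c$ is exactly $l$, one has the implication $c(\lambda)=c(\mu)\Rightarrow l(\lambda)=l(\mu)$, and hence an \emph{inclusion} of fixed-point algebras
\[
C^*(\PNAF)^{\gamma^c}\ \subseteq\ C^*(\PNAF)^{\gamma^l}.
\]
The right-hand side is AF-embeddable by Lemma~\ref{lem:fixed-point algebras v2} together with Voiculescu's theorem (every automorphism of a UHF algebra is approximately inner, so $\bigotimes_\ZZ M_2\rtimes_\sigma\ZZ$ is AF-embeddable). Subalgebras of AF-embeddable algebras are AF-embeddable, so $C^*(\PNAF)^{\gamma^c}$ is AF-embeddable; and $C^*(\LNAF)\Me C^*(\PNAF)^{\gamma^c}$ by \cite{Rosenberg79}. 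No second crossed product, no Brown, no tracking of $\tilde\beta$ is needed.
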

\begin{proof}
Since every automorphism of a UHF algebra is approximately inner, $\bigotimes_\ZZ M_2
\rtimes_\sigma \ZZ$ is AF-embeddable by \cite[3.6 Theorem]{Voi86}.  Thus Lemma~\ref{lem:fixed-point
algebras v2} implies that $C^*(B_2 \times B_2)^{\gamma^l}$ is AF-embeddable.  For all $\lambda, \mu
\in \LNAF, c(\lambda)=c(\mu) \implies l(\lambda)=l(\mu)$, from which it follows that $C^*(B_2
\times B_2)^{\gamma^c} \subseteq C^*(B_2 \times B_2)^{\gamma^l}$; thus $C^*(B_2 \times
B_2)^{\gamma^c}$ is also AF-embeddable.  By \cite[Proposition]{Rosenberg79}, $C^*(\LNAF)$ is
strongly Morita equivalent, and thus stably isomorphic, to $C^*(B_2\times B_2)^{\gamma^c}$.  Hence,
$C^*(\LNAF)$ is itself AF-embeddable.
\end{proof}

\begin{rmk}
It is known that $\bigotimes_\ZZ M_2 \rtimes_\sigma \ZZ$ is a simple, unital, (non AF)
A$\TT$-algebra of real rank zero and has a unique tracial state \cite{BratteliStormerEtAl:ETDS93}.
Thus the same is true for $C^*(B_2\times B_2)^{\gamma^l}$, which is strongly Morita equivalent to
$C^*((B_2 \times B_2) \times_l \ZZ)$ by \cite[Proposition]{Rosenberg79}.  Hence $C^*((B_2 \times
B_2) \times_l \ZZ)$ is another example of a simple, $2$-graph $C^*$-algebra that is neither AF nor
purely infinite.
\end{rmk}

To prove our K-theory results we will need the fact that the $K_0$-group of $C^*(\LNAF)$ is generated by the classes of its vertex projections.  The following Lemma proves this fact holds in general.

\begin{lemma}\label{lem:K_0}
Let $\Lambda$ be a row-finite $2$-graph with no sources. Suppose that the degree of each cycle in
$\Lambda$ has zero first coordinate or the degree of each cycle has zero second coordinate.  Then
$K_0(C^*(\Lambda))$ is generated by $\{[s_v] : v\in \Lambda^0\}$.
\end{lemma}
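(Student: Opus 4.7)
The plan is to invoke the $K$-theoretic computation for $C^*$-algebras of row-finite $2$-graphs with no sources developed in the first author's thesis \cite{EvansPhD}. For such $\Lambda$ the vertex matrices $M_i(v,w) := |v\Lambda^{e_i} w|$ assemble into a chain complex
\[
    0 \to \ZZ\Lambda^0 \xrightarrow{\partial_2} \ZZ\Lambda^0 \oplus \ZZ\Lambda^0 \xrightarrow{\partial_1} \ZZ\Lambda^0 \to 0,
\]
with $\partial_1(x,y) := (1-M_1^t)x + (1-M_2^t)y$ and $\partial_2(z) := \bigl((1-M_2^t)z,\, -(1-M_1^t)z\bigr)$, whose homology gives $K_0(C^*(\Lambda)) \cong H_0 \oplus H_2$ with $H_0 := \coker(\partial_1)$ and $H_2 := \ker(\partial_2) = \ker(1-M_1^t) \cap \ker(1-M_2^t)$. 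Under this identification, the image of the canonical map $\ZZ\Lambda^0 \to K_0(C^*(\Lambda))$, $e_v \mapsto [s_v]$, is precisely the summand $H_0$, so it suffices to show that $H_2 = 0$.

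After swapping the two coordinates if necessary, I may assume every cycle in $\Lambda$ has zero first coordinate. The colour-$1$ sub-$1$-graph $E_1 := (\Lambda^0, \Lambda^{e_1}, r, s)$ is then acyclic, because any cycle in $E_1$ would be a cycle of $\Lambda$ of degree $n e_1$ with $n \geq 1$, contradicting the hypothesis. I would then prove that $\ker(1 - M_1^t) = 0$ in $\ZZ\Lambda^0$, which immediately gives $H_2 \subseteq \ker(1 - M_1^t) = 0$ and completes the proof.

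To see this, suppose $x \in \ZZ\Lambda^0$ is nonzero with $(1-M_1^t)x = 0$, and put $F := \{v \in \Lambda^0 : x(v) \neq 0\}$, a nonempty finite set. The equation at each $v \in F$ reads $x(v) = \sum_{w \in F} M_1(w,v)\, x(w)$, so $x|_F$ lies in $\ker(1-A)$ where $A$ is the $F \times F$ integer matrix $A(v,w) := M_1(w,v)$. Since $A$ is the adjacency matrix of the induced subgraph of $E_1$ on $F$, which is finite and acyclic, a topological ordering of $F$ presents $A$ as a strictly upper-triangular matrix, hence nilpotent. Thus $1 - A$ is invertible over $\ZZ$, forcing $x|_F = 0$ and contradicting the assumed nonvanishing of $x$.

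The main obstacle is securing the chain-complex description of $K_0$ together with the fact that the vertex classes generate the $H_0$-summand; once that is in hand, the acyclicity of $E_1$ forces the restricted vertex matrix $A$ to be nilpotent and the vanishing of $H_2$ drops out by a routine linear-algebra observation.
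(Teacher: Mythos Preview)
Your argument is correct and aligns with the paper's at the structural level: both invoke the Evans $K$-theory computation (Proposition~3.16 of \cite{Evans2008}; the published account rather than the thesis \cite{EvansPhD} is the precise reference) to reduce the assertion to the vanishing of $\ker(\partial_2) = \ker(1-M_1^t)\cap\ker(1-M_2^t)$, and both then observe that acyclicity of the colour-$1$ coordinate graph forces $\ker(1-M_1^t)=0$.

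The only real difference is in how that last vanishing is obtained. The paper takes a $C^*$-algebraic detour: it identifies $\ker(1-M_1^t)$ with $K_1(C^*(\Lambda_1))$ via \cite[Remarks~3.19]{Evans2008}, notes that $\Lambda_1$ has no cycles so $C^*(\Lambda_1)$ is AF by \cite[Theorem~2.4]{KPR1998}, and concludes $K_1=0$. Your route is more elementary and self-contained: a finitely supported solution of $(1-M_1^t)x=0$ restricts to a finite acyclic subgraph, whose adjacency matrix is nilpotent, so $1-A$ is invertible and $x=0$. Your approach avoids invoking graph-$C^*$-algebra structure theory and the $K_1$-formula for $1$-graph algebras, at the cost of writing out a short combinatorial argument; the paper's approach is terser but leans on more external machinery.
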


\begin{proof}
As in \cite[Definition~3.6]{Evans2008}, for $i = 1, 2$ we let $M_i$ denote the $\Lambda^0 \times
\Lambda^0$ integer matrix $M_i(v,w) = |v\Lambda^{e_i}w|$. By our hypothesis, and without loss of
generality, the coordinate graph of $\Lambda$ corresponding to $e_1$ (see \cite[Examples
1.10.(i)]{KP2000}), which we denote by $\Lambda_1$, contains no cycles and so it is AF by
\cite[Examples 1.7.(i)]{KP2000} and \cite[Theorem 2.4]{KPR1998}.  It is well known that the
$K_1$-group of an AF algebra is trivial so that $K_1(\Lambda_1)$ is the trivial group. From
\cite[Remarks 3.19]{Evans2008} (or the well-known formulae for the K-groups of directed graph
$C^*$-algebras) we get $ \ker (1-M_1^t) \cong K_1(\Lambda_1) \cong \{0\}$.  Therefore the
block-column matrix
\[
\begin{pmatrix}
 M^t_2-1 \\ 1 - M^t_1
\end{pmatrix}
\]
of \cite[Proposition~3.16]{Evans2008} has trivial kernel, and the Lemma now follows from
\cite[Proposition~3.16]{Evans2008}.
\end{proof}

\begin{prop}\label{prp:LNAF K-theory}
The $K_0$-group $K_0(p C^*(\LNAF) p)$ is generated by the classes $\{[s_\lambda s^*_\lambda] :
\lambda \in v_{(0,0)}\Lambda\}$, and two such classes are equal if and only if the associated
projections are Murray-von Neumann equivalent in $p C^*(\LNAF) p$. Let $\tau_0$ be the unique
tracial state on $p C^*(\LNAF) p$ defined in Corollary \ref{cor:unique trace}. Then $\tau_0$
induces an order-isomorphism $K_0(\tau_0)$ between the ordered $K_0$-group of $pC^*(\LNAF)p$ and
$(\ZZ[\frac{1}{2}], \ZZ[\frac{1}{2}]\cap \RR^+)$, and hence an order-isomorphism between the
ordered $K_0$-group of $C^*(\LNAF)$ and $(\ZZ[\frac{1}{2}], \ZZ[\frac{1}{2}]\cap \RR^+)$. In
particular $K_0(\tau_0)$ carries the class of the identity to $1$, and carries $\{[s_\lambda
s^*_\lambda] : \lambda \in v_{0,0}\LNAF\}$ to $[0,1] \cap \ZZ[\frac12]$.  Moreover,
$K_1(C^*(\LNAF))$, and hence also $K_1(pC^*(\LNAF)p)$, is trivial.
\end{prop}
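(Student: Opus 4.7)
My plan starts from the observation that $\LNAF$ and $\LAF$ share the same skeleton (Figure~\ref{fig:LAF/LNAF}), hence the same vertex matrices $M_1, M_2$. Since Evans's K-theory formulas for row-finite $2$-graphs without sources~\cite[Proposition~3.16]{Evans2008} are expressed entirely in terms of $M_1$ and $M_2$, they produce isomorphic groups for the two graphs, so $K_*(C^*(\LNAF)) \cong K_*(C^*(\LAF))$. (The hypotheses apply: both graphs are row-finite without sources, and $\LNAF$ has no cycles because any cycle $\lambda$ would force $c(\lambda)_2 = |\lambda| = 0$.) Example~\ref{eg:PAF} established $C^*(\LAF) \Me M_{2^\infty}$, hence $K_0(C^*(\LNAF)) \cong \ZZ[\frac{1}{2}]$ and $K_1(C^*(\LNAF)) = 0$. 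Simplicity of $C^*(\LNAF)$ (from cofinality, Example~\ref{eg:PNAF}) makes $p$ full, and $pC^*(\LNAF)p$ is strongly Morita equivalent to $C^*(\LNAF)$, so $K_0(pC^*(\LNAF)p) \cong \ZZ[\frac{1}{2}]$ and $K_1(pC^*(\LNAF)p) = 0$.

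By Lemma~\ref{lem:K_0}, $K_0(C^*(\LNAF))$ is generated by $\{[s_v] : v \in \LNAF^0\}$. For $v$ in the hereditary set $H$ of $v_{(0,0)}$, pick $\lambda \in v_{(0,0)} \LNAF$ with $s(\lambda) = v$; then $s^*_\lambda s_\lambda = s_v$, so $[s_v] = [s_\lambda s^*_\lambda]$. For $w \notin H$, I use simplicity of $C^*(\LNAF)$ together with the Cuntz-Krieger relations to express $[s_w]$ as an integer combination of classes $[s_v]$ with $v \in H$. Hence $\{[s_\lambda s^*_\lambda] : \lambda \in v_{(0,0)}\LNAF\}$ generates $K_0(pC^*(\LNAF)p)$ under the Morita-equivalence isomorphism. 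Corollary~\ref{cor:unique trace} provides the unique tracial state $\tau_0$, and Lemma~\ref{lem:PNAF unique trace} computes $\tau_0(s_\lambda s^*_\lambda) = \tau(s_{s(\lambda)})/\tau(p)$, a dyadic rational in $(0,1]$.

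Since $\ZZ[\frac{1}{2}]$ is torsion-free and $K_0(\tau_0)$ is a nonzero homomorphism with $K_0(\tau_0)([p]) = 1$, it is injective. Its image is a subgroup of $\RR$ containing $1$ together with arbitrarily small dyadic powers of $\frac{1}{2}$, hence equals $\ZZ[\frac{1}{2}]$. Combining stable finiteness (Corollary~\ref{cor:unique trace}), simplicity, and uniqueness of the trace, the positive cone of $K_0(pC^*(\LNAF)p)$ coincides with $K_0 \cap \tau_0^{-1}(\RR^+)$, so $K_0(\tau_0)$ is an order-isomorphism onto $(\ZZ[\frac{1}{2}], \ZZ[\frac{1}{2}] \cap \RR^+)$ that sends $[p] \mapsto 1$ and carries $\{[s_\lambda s^*_\lambda]\}$ into $(0,1] \cap \ZZ[\frac{1}{2}]$. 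Morita equivalence transports the same conclusions to $C^*(\LNAF)$, and triviality of $K_1(pC^*(\LNAF)p)$ follows from the Morita-equivalence computation above.

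For the Murray-von Neumann equivalence clause, MvN equivalence immediately implies $K_0$-equality, which by injectivity of $K_0(\tau_0)$ is equivalent to equality of trace. The reverse implication is the main obstacle: given canonical generators $s_\lambda s^*_\lambda$ and $s_\mu s^*_\mu$ with equal trace, the source vertices lie at a common level, and I would construct a partial isometry in $pC^*(\LNAF)p$ implementing the equivalence by first decomposing $s_\lambda s^*_\lambda = \sum_{\gamma \in s(\lambda)\LNAF^n} s_{\lambda\gamma} s^*_{\lambda\gamma}$ (and analogously for $\mu$) with $n$ sufficiently large, then pairing off summands with a shared source vertex using partial isometries $s_{\lambda\gamma} s^*_{\mu\delta}$ in the corner. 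Verifying that these pairings can be assembled into a well-defined partial isometry --- which ultimately relies on the combinatorial symmetry of $\LNAF$ at each fixed level --- is the step I expect to require the most care.
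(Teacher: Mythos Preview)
Your overall strategy (Evans's formulae via the shared skeleton, Lemma~\ref{lem:K_0}, then the trace) is close to the paper's, but there is one concrete gap and one place where you overshoot.

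\textbf{The Murray--von Neumann step.} Your proposed ``decompose and pair'' argument does not go through as stated. If $s(\lambda)=v_{(i,j)}$ and $s(\mu)=v_{(i',j)}$ with $i\neq i'$, then for any $n\in\NN^2$ the multiset of source vertices $\{s(\gamma):\gamma\in s(\lambda)\LNAF^n\}$ is a horizontal translate of the corresponding multiset for $s(\mu)$; they never coincide, so you cannot match summands $s_{\lambda\gamma}s^*_{\lambda\gamma}$ and $s_{\mu\delta}s^*_{\mu\delta}$ with $s(\gamma)=s(\delta)$ bijectively. The paper sidesteps this entirely by observing that the two $e_1$-edges into $v_{(i,j)}$ and the two $e_2$-edges into $v_{(i+1,j)}$ share the \emph{same} pair of sources $\{v_{(i,j+1)},v_{(i+1,j+1)}\}$, so that
\[
W:=\big(s_{e^{i,j}_1}+s_{e^{i,j}_2}\big)\big(s_{f^{i+1,j}_1}+s_{f^{i+1,j}_2}\big)^*
\]
is a partial isometry with $WW^*=s_{v_{(i,j)}}$ and $W^*W=s_{v_{(i+1,j)}}$. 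This single explicit equivalence gives $s_{v_{(i,j)}}\sim_{\mathrm{MvN}} s_{v_{(i',j)}}$ for all $i,i'$, and then $s_\lambda s^*_\lambda\sim s_{s(\lambda)}\sim s_{s(\mu)}\sim s_\mu s^*_\mu$ whenever the traces agree (the implementing partial isometry lies in $pC^*(\LNAF)p$ automatically since both endpoints are subprojections of $p$). This also cleanly handles your ``$w\notin H$'' step: the same horizontal equivalence gives $[s_{v_{(i,j)}}]=[s_{v_{(0,j)}}]$ and then the Cuntz--Krieger relation $[s_{v_{(0,j)}}]=2^{k-j}[s_{v_{(0,k)}}]$ does the rest, so ``simplicity plus CK relations'' alone is not what is needed.

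\textbf{The order structure.} The sentence ``combining stable finiteness, simplicity, and uniqueness of the trace, the positive cone coincides with $K_0\cap\tau_0^{-1}(\RR^+)$'' is not a general theorem (it would amount to strict comparison). What actually works here is exactly what the paper does: once you know every $x\in K_0(A_0)$ is of the form $m[s_\lambda s^*_\lambda]$ for some $m\in\ZZ$ and some $\lambda\in v_{(0,0)}\LNAF$, positivity of the trace forces $m\ge 0$, whence $x\in K_0(A_0)^+$; conversely positive classes have positive trace by stable finiteness. So you need the concrete generation result (which again comes from the horizontal equivalence $W$), not an abstract appeal to the trace.
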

\begin{proof}
Let $A_0:=pC^*(\LNAF)p$ and $A:=C^*(\LNAF)$.  Then $A_0$ is unital and $A$ is stably unital
\cite[Remarks 1.6.(v)]{KP2000}. Moreover, both are stably finite by Corollary \ref{cor:unique
trace} so their $K_0$-groups are equipped with the canonical partial ordering \cite[\S
6.3]{Blackadar:CUP98}.  Furthermore, the trace $\tau_0$ induces a state $K_0(\tau_0)$ on
$K_0(A_0)$, whose range is $\ZZ[\frac{1}{2}]$.  Since $A_0$ is a full corner in $A$ the inclusion
mapping $i:A_0\hookrightarrow A$ induces an order-isomorphism $K_0(i): K_0(A_0) \rightarrow
K_0(A)$.

The partial isometry $(s_{e_1^{i,j}} + s_{e_2^{i,j}})(s_{f_1^{i+1,j}} + s_{f_2^{i+1,j}})^*$
implements a Murray von-Neumann equivalence in $A$ between $s_{v_{(i,j)}}$ and $s_{v_{(i+1,j)}}$
for all $i,j\in\ZZ$.  It follows, by induction, that $s_{v_{(i,j)}} \sim_{\mathrm{M.vN}}
s_{v_{(k,j)}}$ for all $i,j,k\in\ZZ$, and hence their $K_0$-classes are equal. Moreover, for each
$i,j,k\in\ZZ$ such that $k\ge j$ we have
\[
[s_{v_{(i,j)}}]
    =\sum_{\lambda \in v_{(i,j)}\LNAF^{(k-j)e_1}} [s_\lambda s_\lambda^*]
    = |v_{(i,j)}\LNAF^{(k-j)e_1}| [ s_{v_{(i,k)}} ] = 2^{k-j} [s_{v_{(i,k)}}],
\]
since, for each $\lambda\in v_{(i,j)}\LNAF^{(k-j)e_1}$, $s(\lambda)=v_{(l,k)}$ for some $l\in\ZZ$,
and $s_\lambda s_\lambda^*$ is Murray von-Neumann equivalent to $s(\lambda)$ in $A$.

Let $i,j\in\ZZ$.  If $j<0$, then $[s_{v_{(i,j)}}] = 2^{0-j}[s_{v_{(i,0)}}] =
2^{-j}[s_{v_{(0,0)}}]$. If $j\ge 0$ then $[s_{v_{(i,j)}}]=[s_{v(0,j)}]=[s_\lambda s_\lambda^*]$ for
some $\lambda\in v_{(0,0)}\LNAF^{je_1}$.
By Lemma \ref{lem:K_0} $K_0(A)$ is generated by $\{[s_v] : v\in \LNAF^0\}$. Therefore it is also generated by
$\{[s_{\lambda} s_{\lambda}^*] : \lambda \in s_{v_{(0,0)}}\LNAF \}$.  Thus $K_0(A_0)$ is generated
by the pre-image under $K_0(i)$, namely $\{ [s_{\lambda} s_{\lambda}^*] : \lambda \in
s_{v_{(0,0)}}\LNAF \}$.

Let $\lambda,\mu\in v_{(0,0)}\LNAF$ such that $s(\lambda)=v_{(i,j)}, s(\mu)=v_{(k,l)}$ for some
$i,j,k,l\in\ZZ$ with $j\le k$.  Then the following equations are satisfied in $K_0(A)$: $[s_\lambda
s_\lambda^*]=[s_{v_{(i,j)}}]=2^{k-j}[s_{v_{(i,k)}}] = 2^{k-j} [s_\mu s_\mu^*]$. It follows that
$[s_\lambda s_\lambda^*]=2^{k-j}[s_\mu s_\mu^*]$ in $K_0(A_0)$ also.

The above implies that we can write each $x \in K_0(A_0)$ as $x=\sum_{j=0}^n x_j [s_{\lambda_j}
s_{\lambda_j}^*]$ where $\lambda_j\in v_{(0,0)}\LNAF v_{(0,j)}$ for all $j=1,2,\ldots, n$.
Furthermore, $x=(\sum_{j=0}^n 2^{n-j}x_j)[s_{\lambda_n} s_{\lambda_n}^*]$ so that each element in
$K_0(A_0)$ can be written as an integer multiple of $[s_\lambda s_\lambda^*]$ for some $\lambda\in
v_{(0,0)}\LNAF$.

We will show that $K_0(\tau_0)$ is injective. Suppose that $K_0(\tau_0)(x)=0$ for some $x\in
K_0(A_0)$. Now $x=m[s_\lambda s_\lambda^*]$ for some $m\in\ZZ$ and $\lambda\in v_{(0,0)}\LNAF$.  So
we have $0=m K_0(\tau_0)([s_\lambda s_\lambda^*])=m \tau_0(s_\lambda s_\lambda^*)$. Thus $m=0$
since $\tau_0$ is faithful, so that $x=0$, as required. Since we already showed that projections
with the same trace are Murray-von Neumann equivalent, it follows that $K_0$-equivalence is the
same as Murray-von Neumann equivalence.

We have established that $K_0(\tau_0)$ is a positive isomorphism, so to show that it is an
isomorphism of ordered groups, it suffices to prove that $\ZZ[\frac{1}{2}]^+ \subseteq
K_0(\tau)(K_0(A_0)^+)$. Let $y\in\ZZ[\frac{1}{2}]^+$ then $y=m2^{-n}$ for some $m,n\in\NN$.  But
$m2^{-n}=K_0(\tau)(m[s_\lambda s_\lambda^*])$ for some $\lambda\in v_{(0,0)}\LNAF$, thus
$y=K_0(\tau)(x)$ for some $x\in K_0(A_0)^+$, as required.

As $A_0$ is strongly Morita equivalent to $A$, it remains to prove that $K_1(A)$ is isomorphic to
the trivial group. This follows immediately from \cite[Proposition~3.16]{Evans2008} since $\LNAF$
and $\LAF$ share the same skeleton.
\end{proof}

\begin{rmk}
It follows from the above the unique normalised traces on $C^*(H\LNAF)$ and on $C^*(H\LAF)$
determine an order-isomorphism $K_0(C^*(\LNAF)) \cong K_0(C^*(\LAF))$ which carries the class of
each vertex projection in $C^*(\LNAF)$ to the class of the corresponding vertex projection in
$C^*(\LAF)$.
\end{rmk}

All of the above is strong evidence suggesting that $C^*(\LNAF)$ is isomorphic to $C^*(\LAF)$.
However, since each vertex $v_{i,j}$ receives both a periodic infinite path $x^{i,j}$ and an
aperiodic infinite path $y^{i,j}$, every open set in the unit space $\Gg^{(0)}_\Lambda$ of the
$k$-graph groupoid of \cite{KP2000} contains both a point with trivial isotropy and a point with
nontrivial isotropy. So $\Gg_\Lambda$ is topologically free but not principal: the set of units
with trivial isotropy is dense in $G^{(0)}$, but not the whole of $G^{(0)}$. Let $D :=
\clsp\{s_\lambda s^*_\lambda : \lambda \in \LNAF\}$. It follows from
\cite[Proposition~5.11]{Renault2008} that the pair $(C^*(\LNAF), D)$ is a Cartan pair but not a
$C^*$-diagonal, and in particular that $D$ does not have unique extension of pure states to
$C^*(\LNAF)$. Since the canonical maximal abelian subalgebra in an AF algebra is always a diagonal
in the sense of Kumjian and in particular always has the extension property, it follows that if
$C^*(\LNAF)$ is AF, then its canonical diagonal subalgebra (as an AF algebra) is not conjugate to
the Cartan subalgebra $D$. It must, however, be in some sense locally conjugate: the range of the
trace on $C^*(\LNAF)$ is exhausted on $D$, and since trace equivalence coincides with Murray-von
Neumann equivalence for projections in AF algebras, it would follow that each projection in the AF
diagonal was Murray-von Neumann equivalent to a projection in $D$.

We have been unable to determine whether $C^*(\LNAF)$ is indeed an AF algebra, and leave this
interesting question open.

\begin{rmk}
Our results in this paper constitute only a start on the problem of when a $k$-graph algebra is AF.
There are numerous examples upon which our results shed little light, and it seems likely that
substantially different techniques are required to understand them. One such is the unique
$2$-graph $\mathcal{X}$ with skeleton as illustrated in Figure~\ref{fig:Xgraph}
\begin{figure}[ht]
\[
\begin{tikzpicture}[scale=1.5]
    \foreach \x in {-1,0,1,2,3,4} {
        \foreach \y in {-1,0,1,2} {
            \node[inner sep=1.5pt, circle, fill=black] (\x\y) at (\x,\y) {};
            \ifnum \x < 4 {
                \draw[-latex] (\x\y)--+(0.95,0);
                \ifnum \y > -1 {
                    \draw[-latex] (\x\y)--+(0.95,-0.95);
                } \fi
                \ifnum \y < 2 {
                    \draw[-latex,dashed] (\x\y)--+(0.95,0.95);
                } \fi
            } \fi
            \ifnum \y < 2 {
                \draw[-latex,dashed] (\x\y)--+(0,0.95);
            } \fi
        }
    }
    \foreach \x in {-1,0,1,2,3,4} {
        \node at (\x,-1.4) {$\vdots$};
        \node at (\x,2.5) {$\vdots$};
    }
    \foreach \y in {-1,0,1,2} {
        \node at (-1.5,\y) {$\dots$};
        \node at (4.5,\y) {$\dots$};
    }
\end{tikzpicture}
\]
\caption{The skeleton of the $2$-graph $\mathcal{X}$.}\label{fig:Xgraph}
\end{figure}
discussed on \cite[pages 52~and~53]{EvansPhD}. We pass up, for now, analysis of this example: the
questions raised by the example $\LNAF$ above seem to go more directly to the heart of the
difficulty of the question of when a $k$-graph $C^*$-algebra is AF.
\end{rmk}

\end{document}